\documentclass[11pt]{article}

\usepackage{amsmath,epsfig,amssymb,amsbsy,verbatim,array,color,graphics,graphicx}
\usepackage{amssymb,amsmath,amsthm}
\usepackage{graphicx}
\usepackage{subdepth}

\newtheorem{thm}{Theorem}
\newtheorem{lem}[thm]{Lemma}

\newtheorem{obs}[thm]{Observation}
\newtheorem{conj}[thm]{Conjecture}
\newtheorem{clm}[thm]{Claim}

\setlength{\textwidth}{15.4cm}
\setlength{\oddsidemargin}{0.55cm}
\setlength{\evensidemargin}{0.55cm}
\setlength{\topmargin}{-0.7cm}
\setlength{\textheight}{21.7cm}
\setlength{\parindent}{0.5cm}

\def\dcup{\,\dot\cup\,}

\begin{document}

% paper title
\title{Monochromatic $k$-connection of graphs\\[2ex]}

\author{Qingqiong Cai$^\dag$
\and
Shinya Fujita$^\ddag$
\and Henry Liu%
\thanks{Corresponding author\newline\indent\hspace{0.12cm}$^\dag$College of Computer Science, Nankai University, Tianjin 300350, China. E-mail address: caiqingqiong@nankai.edu.cn\newline\indent\hspace{0.12cm}$^\ddag$School of Data Science, Yokohama City University, Yokohama 236-0027, Japan. E-mail address: fujita@yokohama-cu.ac.jp\newline\indent\hspace{0.12cm}$^\S$School of Mathematics, Sun Yat-sen University, Guangzhou 510275, China. E-mail address: liaozhx5@mail.sysu.edu.cn\newline\indent\hspace{0.12cm}$^\P$Department\, of\, Mathematics,\, Ajou\, University,\, Suwon\, 16499,\, Republic\, of\, Korea.\, E-mail\, address: borampark@ajou.ac.kr}\:\,$^\S$
\and Boram Park$^\P$\\[2ex]
}

\date{14 February 2024}
% make the title area
\maketitle
\begin{abstract}
An edge-coloured path is \emph{monochromatic} if all of its edges have the same colour. For a $k$-connected graph $G$, the \emph{monochromatic $k$-connection number} of $G$, denoted by $mc_k(G)$, is the maximum number of colours in an edge-colouring of $G$ such that, any two vertices are connected by $k$ internally vertex-disjoint monochromatic paths. In this paper, we shall study the parameter $mc_k(G)$. We obtain bounds for $mc_k(G)$, for general graphs $G$. We also compute $mc_k(G)$ exactly when $k$ is small, and $G$ is a graph on $n$ vertices, with a spanning $k$-connected subgraph having the minimum possible number of edges, namely $\lceil\frac{kn}{2}\rceil$. We prove a similar result when $G$ is a bipartite graph. \\

\noindent\textbf{AMS Subject Classification (2020):} 05C15, 05C40\\

\noindent\textbf{Keywords:} Edge-colouring, $k$-connected graph, monochromatic ($k$-)connection number
\end{abstract}

\section{Introduction}\label{introsect}

In this paper, all graphs considered are finite, simple and undirected. For a graph $G$ and a vertex $x\in V(G)$, the \emph{neighbourhood} of $x$ is $N_G(x)=\{y\in V(G):xy\in E(G)\}$, and the \emph{degree} of $x$ is $\deg_G(x)=|N_G(x)|$. Let $\delta(G)$ and $\Delta(G)$ denote the \emph{minimum degree} and \emph{maximum degree} of $G$. For non-empty, disjoint subsets $X,Y\subset V(G)$, let $E_G(X,Y)$ denote the set of edges of $G$ with one end-vertex in $X$, and the other in $Y$. A \emph{super-path} is a path with length at least two. Throughout, let $k$ and $r$ be positive integers. A graph $G$ is \emph{$k$-connected} if $|V(G)|\ge k+1$, and $G-U$ is connected for every $U\subset V(G)$ with $|U|\le k-1$. An \emph{edge-colouring} of $G$, or simply a \emph{colouring}, is a function $\phi :E(G)\to [r]$ for some $r$, where $[r]=\{1,\dots,r\}$. We think of $[r]$ as a set of colours, and every edge is given one of the $r$ possible colours. When $G$ is given an edge-colouring, the \emph{colour neighbourhood} of $x\in V(G)$, denoted by $N_G^c(x)$, is the set of colours among all edges incident to $x$, and the \emph{colour degree} of $x$ is $\deg_G^c(x)=|N_G^c(x)|$. The \emph{minimum colour degree} of $G$ is denoted by $\delta^c(G)$. The \emph{subgraph of $G$ induced by a colour $i$} is the graph $G_i$ with vertex set $V(G_i)=V(G)$ and containing all edges with colour $i$. A \emph{monochromatic component} of $G$ is a connected component of $G_i$ for some colour $i$. For any other undefined terms in graph theory, we refer to the book \cite{Bol98}.

An edge-coloured path is \emph{monochromatic} if all of its edges have the same colour. For a connected graph $G$, the \emph{monochromatic connection number} of $G$, denoted by $mc(G)$, is the maximum number of colours in an edge-colouring of $G$ so that, any two vertices are connected by a monochromatic path. This notion of graph connectivity was introduced by Caro and Yuster \cite{CY11} in 2011. Since then, the topic of monochromatic connection of graphs has attracted considerable interest. We refer the reader to the survey paper of Li and Wu \cite{LiWu18} for further information.

The parameter $mc(G)$ is the natural opposite of the \emph{rainbow connection number} $rc(G)$ of $G$, introduced by Chartrand et al.~\cite{CJMZ08} in 2008. For a connected graph $G$, the parameter $rc(G)$ is the minimum number of colours in an edge-colouring of $G$ so that, any two vertices are connected by a \emph{rainbow} path (i.e., a path with distinct colours). In 2009, Chartrand et al.~\cite{CJMZ09} introduced an extension of the parameter $rc(G)$ which considers graphs with higher vertex-connectivity, as follows. A set of internally vertex-disjoint paths connecting two vertices in a graph will simply be called  \emph{disjoint}. By Menger's theorem \cite{Men27}, a graph is $k$-connected if and only if any two vertices are connected by $k$ disjoint paths. For a $k$-connected graph $G$, the \emph{rainbow $k$-connection number} $rc_k(G)$ of $G$ is the minimum number of colours in an edge-colouring of $G$ such that, any two vertices are connected by $k$ disjoint rainbow paths.

Here, we shall consider the natural opposite of the parameter $rc_k(G)$, which will also be an extension of the parameter $mc(G)$. For a $k$-connected graph $G$, an edge-colouring of $G$ is \emph{monochromatic $k$-connected} if any two vertices are connected by $k$ disjoint monochromatic paths. The \emph{monochromatic $k$-connection number} of $G$, denoted by $mc_k(G)$, is the maximum number of colours in a monochromatic $k$-connected colouring of $G$. Clearly $mc_1(G)=mc(G)$, and $mc_k(G)$ is well-defined if and only if $G$ is $k$-connected.

We note that very recently, Li and Li \cite{LL20} considered a closely related parameter. A graph $G$ is \emph{$k$-edge-connected} if $|V(G)|\ge 2$, and $G-E$ is connected for every $E\subset E(G)$ with $|E|\le k-1$. By the $k$-edge-connected version of Menger's theorem \cite{Men27}, a graph is $k$-edge-connected if and only if any two vertices are connected by $k$ edge-disjoint paths. Let $G$ be a $k$-edge-connected graph. An edge-colouring of $G$ is \emph{monochromatic $k$-edge-connected} if any two vertices of $G$ are connected by $k$ edge-disjoint monochromatic paths. The \emph{monochromatic $k$-edge-connection number} of $G$, denoted by $emc_k(G)$, is the maximum number of colours in a monochromatic $k$-edge-connected colouring of $G$. Thus, $emc_k(G)$ is well-defined if and only if $G$ is $k$-edge-connected. Clearly if $G$ is a $k$-connected graph, we have $mc_k(G)\le emc_k(G)$.

This paper will be organised as follows. In Section \ref{gensect}, we prove some bounds for $mc_k(G)$, for any $k$-connected graph $G$ where $k\ge 2$. This result implies that $mc_k(G)=1$ when $e(G)$ is minimum, i.e., $e(G)= \lceil\frac{kn}{2}\rceil$. In Section \ref{smallksect}, we compute $mc_k(G)$ exactly when $k$ is small, and $G$ contains a spanning $k$-connected subgraph with $\lceil\frac{kn}{2}\rceil$ edges. We also prove a similar result when $G$ is a bipartite graph. In Section \ref{compsect}, we deduce some results about $mc_k(G)$ when $G$ is a complete graph, and a complete bipartite graph. Several conjectures will be mentioned along the way.

\section{Monochromatic $k$-connection number of general graphs}\label{gensect}

In this section, we shall investigate the monochromatic $k$-connection number of general graphs. Caro and Yuster \cite{CY11} proved several results about the parameter $mc(G)$. One of their main results is the following.

\begin{thm}\label{CYthm}\textup{\cite{CY11}}
Let $G$ be a connected graph on $n\ge 2$ vertices with chromatic number $\chi(G)$. Then
\begin{equation}\label{CYthmeq}
e(G)-n+2\le mc(G) \le e(G)-n+\chi(G).
\end{equation}
\end{thm}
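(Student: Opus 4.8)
The plan is to prove the two inequalities separately: the lower bound is a direct construction, while the upper bound rests on a structural lemma about spanning trees that is where the chromatic number enters. For the lower bound $mc(G)\ge e(G)-n+2$, I would fix a spanning tree $T$ of $G$, which has exactly $n-1$ edges since $G$ is connected with $n\ge 2$. Colour every edge of $T$ with a single colour, say colour $1$, and give each of the remaining $e(G)-(n-1)$ edges its own new distinct colour; this uses $1+(e(G)-n+1)=e(G)-n+2$ colours. To verify that the colouring is monochromatic connected, note that the colour-$1$ subgraph contains all of $T$, so its unique monochromatic component is all of $V(G)$, and hence any two vertices are joined by a monochromatic (indeed colour-$1$) path lying in $T$.

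For the upper bound, let $\phi$ be any monochromatic connected colouring using $r$ colours; I want $r\le e(G)-n+\chi(G)$. The first step is a counting reduction. Fix any spanning tree $T$ of $G$. A colour not appearing on $T$ must appear on at least one of the $e(G)-(n-1)$ non-tree edges, and distinct colours require distinct edges, so
\[
r \;\le\; \big(\text{number of colours appearing on }T\big) + \big(e(G)-n+1\big).
\]
Thus it suffices to exhibit a spanning tree of $G$ on which $\phi$ uses at most $\chi(G)-1$ colours. I note that a more naive approach — taking a spanning forest of each colour class and bounding the colours by the edges of their union — only yields the trivial estimate $r\le e(G)$, so the chromatic number must enter through the spanning-tree estimate rather than through pure edge counting.

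The heart of the argument is therefore the claim that every monochromatic connected colouring of $G$ admits a spanning tree using at most $\chi(G)-1$ colours. To prove it, I would fix a proper vertex-colouring of $G$ with independent colour classes $V_1,\dots,V_{\chi}$, and build a spanning tree by repeatedly merging these classes into one connected piece. Since $\phi$ is monochromatic connected, any two currently-separated pieces are joined by a monochromatic path, so each merge can be realised along a single monochromatic component and charged to one colour; performing the $\chi-1$ merges needed to connect the $\chi$ classes should cost at most $\chi-1$ distinct colours, after which spanning trees of the individual monochromatic components fill in the remaining connections without introducing new colours.

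I expect the main obstacle to be exactly the bookkeeping in this last step: controlling the colour count down to $\chi-1$ rather than more, given that vertices inside a single independent set $V_i$ carry no edges between them and so must be connected by monochromatic paths routing through the other classes. I would address this by a minimality argument, taking a spanning tree $T$ that minimises the number of $\phi$-colours and showing that if $T$ used $\chi$ or more colours, then the monochromatic components together with the partition $V_1,\dots,V_{\chi}$ would supply an improving exchange of a tree edge for a differently-coloured edge, contradicting minimality.
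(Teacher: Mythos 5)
Your lower-bound construction is correct and is exactly the argument the paper indicates for (\ref{CYthmeq}) (the paper only sketches this direction and cites \cite{CY11} for the upper bound). The upper bound is where your proposal breaks: the spanning-tree lemma you reduce everything to --- that every monochromatic connected colouring admits a spanning tree carrying at most $\chi(G)-1$ colours --- is false. Let $G=K_{2,2,2}$ be the octahedron, i.e.\ $K_6$ minus the perfect matching $u_1v_1,u_2v_2,u_3v_3$, so $n=6$, $e(G)=12$, $\chi(G)=3$. Give $u_1u_2,u_2v_1$ colour $1$; give $u_2u_3,u_3v_2$ colour $2$; give $u_3u_1,u_1v_3$ colour $3$; and give the remaining six edges six further distinct colours. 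Adjacent pairs are trivially joined by monochromatic paths, and each pair $u_i,v_i$ is joined by the colour-$i$ path of length two, so this colouring is monochromatic connected; it uses $9=e(G)-n+\chi(G)$ colours, so it is even an optimal colouring. Yet the nontrivial component of colour $i$ has only three vertices, the union of any two of these components misses a vertex ($v_3$, $v_2$ or $v_1$ respectively), and unions involving the single-edge colours are smaller still; hence no two colour classes have a spanning connected union, so every spanning tree of $G$ carries at least $3=\chi(G)$ colours. Your counting step $r\le c(T)+(e(G)-n+1)$ is correct, but even if one could show some spanning tree carries only $\chi(G)$ colours, this would only yield $r\le e(G)-n+\chi(G)+1$; the example shows the loss is genuine, since one non-tree edge must repeat a colour already on the tree, which your count ignores. (Separately, your merging sketch conflates the independent classes $V_i$ with connected pieces: a monochromatic path joining $V_1$ to $V_2$ does not connect $V_1\cup V_2$, and connecting the vertices inside a single independent class may itself cost many colours --- three in the example above.)

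The chromatic number has to enter through per-colour accounting rather than through a single spanning tree. A proof that works (in the spirit of \cite{CY11}; the paper under review proves neither direction): first, as in the recolouring procedure after Observation \ref{UBobs}, pass to a colouring in which every colour induces exactly one nontrivial component; this can only increase the number of colours $r$. If the nontrivial component of colour $c$ has $n_c\ge 2$ vertices, then $e(G)=\sum_c e(G_c)\ge\sum_c(n_c-1)=r+\sum_c(n_c-2)$. Next one shows $\sum_c(n_c-2)\ge n-\chi(G)$: turn the component vertex sets into cliques one at a time; since each such set spans an edge of $G$, a proper colouring can be repaired by giving $n_c-2$ of its vertices fresh colours, so each step raises the chromatic number by at most $n_c-2$, and since every pair of vertices lies in a common monochromatic component (adjacent pairs via the colour of their edge, non-adjacent pairs via the monochromatic path joining them), the final graph is $K_n$. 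Hence $n\le\chi(G)+\sum_c(n_c-2)$, and combining the two displayed inequalities gives $r\le e(G)-n+\chi(G)$.
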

The lower bound of (\ref{CYthmeq}) can be seen by considering an edge-colouring of $G$ where a spanning tree (with $n-1$ edges) is given one colour, and all remaining edges are given further distinct colours. This lower bound is attained when $G$ is a tree. Caro and Yuster also proved some other sufficient conditions on $G$ so that $mc(G)=e(G)-n+2$ (see \cite{CY11}, Theorem 1). For the upper bound of (\ref{CYthmeq}), we see that it is attained when $G$ is the complete graph $K_n$. Very recently, Jin et al.~\cite{JLY20} characterised all connected graphs $G$ on $n$ vertices such that $mc(G) = e(G)-n+\chi(G)$.

When $G$ is a $k$-connected graph, where $k\ge 2$, the situation for $mc_k(G)$ appears to be rather different, when compared to (\ref{CYthmeq}). Consider the analogous situation to the lower bound of (\ref{CYthmeq}). A subgraph $H\subset G$ is a \emph{minimum spanning $k$-connected subgraph} if $H$ is a $k$-connected subgraph such that $V(H)=V(G)$, and $e(H)$ is minimum. We define
\[
h_k(G)=\max\{mc_k(H):H\textup{ is a minimum spanning $k$-connected subgraph of }G\}.
\]

\begin{obs}\label{LBobs}
Let $G$ be a $k$-connected graph, where $k\ge 2$, and let $H\subset G$ be a minimum spanning $k$-connected  subgraph. Then
\begin{equation}\label{mckLBeq}
mc_k(G)\ge e(G)-e(H)+h_k(G)\ge e(G)-e(H)+1.
\end{equation}
\end{obs}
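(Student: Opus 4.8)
The plan is to imitate the construction behind the lower bound of Theorem~\ref{CYthm}. Among all minimum spanning $k$-connected subgraphs of $G$, I would pick one, say $H^\ast$, that attains the maximum in the definition of $h_k(G)$, so that $mc_k(H^\ast)=h_k(G)$. Fix a monochromatic $k$-connected colouring $\phi^\ast$ of $H^\ast$ using exactly $h_k(G)$ colours. I would then extend $\phi^\ast$ to a colouring $\phi$ of the whole graph $G$ by keeping the colour of every edge of $H^\ast$, and assigning to each of the $e(G)-e(H^\ast)$ edges in $E(G)\setminus E(H^\ast)$ its own brand-new colour, all distinct from one another and from the colours used on $H^\ast$.

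The heart of the argument is to check that $\phi$ is a monochromatic $k$-connected colouring of $G$. This is immediate from the fact that $H^\ast$ is a \emph{spanning} subgraph: for any two vertices $u,v\in V(G)=V(H^\ast)$, the colouring $\phi^\ast$ already supplies $k$ disjoint monochromatic $u$--$v$ paths inside $H^\ast$. Since every edge of $H^\ast$ retains its colour under $\phi$, each of these $k$ paths is still monochromatic in $G$, and they remain internally vertex-disjoint. Hence any two vertices of $G$ are joined by $k$ disjoint monochromatic paths, as required. The number of colours in $\phi$ is $h_k(G)+(e(G)-e(H^\ast))$, because the new colours are all distinct and disjoint from the $h_k(G)$ colours on $H^\ast$. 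As all minimum spanning $k$-connected subgraphs of $G$ share the same number of edges, we have $e(H^\ast)=e(H)$, and therefore $mc_k(G)\ge e(G)-e(H)+h_k(G)$, giving the first inequality.

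For the second inequality it suffices to show $h_k(G)\ge 1$, i.e.\ that $mc_k(H)\ge 1$ for some (equivalently, every) minimum spanning $k$-connected subgraph $H$. I would colour all edges of $H$ with a single colour. Since $H$ is $k$-connected, Menger's theorem guarantees $k$ disjoint paths between any two vertices, and every such path is monochromatic because only one colour is used. Thus this one-colour colouring is monochromatic $k$-connected, so $mc_k(H)\ge 1$ and hence $h_k(G)\ge 1$.

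I do not anticipate a genuine obstacle here; the construction is routine once one observes that a monochromatic $k$-connected colouring of a spanning subgraph automatically remains monochromatic $k$-connected after adjoining edges with fresh colours. The only point requiring a moment of care is the bookkeeping: one must work with a subgraph $H^\ast$ attaining $h_k(G)$ rather than an arbitrary $H$, and then invoke that all minimum spanning $k$-connected subgraphs have the same edge count so that $e(H^\ast)$ may be replaced by $e(H)$ in the final bound.
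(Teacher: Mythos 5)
Your proposal is correct and follows essentially the same route as the paper: choose a minimum spanning $k$-connected subgraph attaining $h_k(G)$, colour it with $h_k(G)$ colours via a monochromatic $k$-connected colouring, and give each remaining edge of $G$ a fresh distinct colour. Your additional bookkeeping (that all minimum spanning $k$-connected subgraphs have the same edge count, and that $h_k(G)\ge 1$ via a single-colour colouring and Menger's theorem) simply makes explicit details the paper leaves implicit.
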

To obtain the first inequality of (\ref{mckLBeq}), we consider the following monochromatic $k$-connected colouring of $G$. We choose $H\subset G$ so that $H$ can be given a monochromatic $k$-connected colouring with $h_k(G)$ colours, and all edges of $E(G)\setminus E(H)$ are given further distinct colours. We propose the following conjecture, which claims that such a colouring of $G$ attains $mc_k(G)$.

\begin{conj}\label{mckconj}
Let $G$ be a $k$-connected graph, where $k\ge 2$, and let $H\subset G$ be a minimum spanning $k$-connected  subgraph. Then
\[
mc_k(G)=e(G)-e(H)+h_k(G).
\]
\end{conj}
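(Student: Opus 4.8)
The lower bound $mc_k(G)\ge e(G)-e(H)+h_k(G)$ is exactly Observation~\ref{LBobs}, so the whole content of the statement is the matching upper bound
\[
mc_k(G)\le e(G)-e(H)+h_k(G).
\]
The plan is to reduce this to a single local inequality. First note that all minimum spanning $k$-connected subgraphs of $G$ have the same number of edges, so $e(H)$ is unambiguous; fix one such subgraph $H^*$ with $mc_k(H^*)=h_k(G)$, and enumerate $E(G)\setminus E(H^*)=\{e_1,\dots,e_m\}$, where $m=e(G)-e(H)$. Put $G_0=G$ and $G_j=G_{j-1}-e_j$, so that $G_m=H^*$. Crucially, every $G_j$ contains the $k$-connected spanning subgraph $H^*$, hence is itself $k$-connected, and so each $mc_k(G_j)$ is defined. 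Consequently it suffices to prove the following \textbf{Key Lemma}: for every $k$-connected graph $G'$ with $k\ge 2$ and every edge $e$ such that $G'-e$ is still $k$-connected, one has $mc_k(G')\le mc_k(G'-e)+1$. Telescoping this along $G_0,G_1,\dots,G_m$ gives $mc_k(G)\le mc_k(H^*)+m=h_k(G)+(e(G)-e(H))$, which is precisely the required upper bound, so the entire problem collapses to the Key Lemma.

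To attack the Key Lemma I would start from an optimal monochromatic $k$-connected colouring $\phi$ of $G'$ using $c=mc_k(G')$ colours, delete $e=xy$, and try to modify $\phi$ into a monochromatic $k$-connected colouring of $G'-e$ that sacrifices at most one colour. Restricting $\phi$ to $G'-e$ already keeps at least $c-1$ colours (a colour is lost only if $\phi(e)$ occurred on $e$ alone), so the task is to restore the monochromatic $k$-connection at the cost of at most one further colour. The key structural observation is that, for any pair $u,v$, the $k$ internally disjoint monochromatic $u$--$v$ paths guaranteed by $\phi$ are pairwise edge-disjoint, so $e$ lies on at most one of them; deleting $e$ therefore leaves at least $k-1\ge 1$ of these paths intact for \emph{every} pair. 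Since $G'-e$ remains $k$-connected, Menger's theorem supplies the connectivity ``room'' to reroute the unique broken path; the substance of the argument is to realise such reroutes monochromatically, recolouring a small set of edges near $e$ onto a common colour so that every affected pair simultaneously regains a $k$-th monochromatic path.

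I expect this recolouring to be the main obstacle. Repairing a single pair is routine, but a recolouring that mends one pair can destroy the monochromatic path of another, and a crude fix may merge several colour classes and thereby lose more than one colour; achieving a \emph{globally consistent} repair at a cost of at most one colour is the delicate heart of the matter. It is also here that $k\ge 2$ must enter essentially, for the analogous inequality fails when $k=1$: iterating $mc(G)\le mc(G-e)+1$ down to a spanning tree would force $mc(G)\le e(G)-n+2$, contradicting the upper bound $e(G)-n+\chi(G)$ of Theorem~\ref{CYthm} whenever $\chi(G)\ge 3$. Accordingly, I would first settle the Key Lemma (equivalently, the conjecture) in the most rigid regimes, where the surviving monochromatic structure is easiest to control: when $G$ is so sparse that $e(H)=\lceil\frac{kn}{2}\rceil$, so that $h_k(G)=1$ and the claim becomes $mc_k(G)=e(G)-\lceil\frac{kn}{2}\rceil+1$; when $k$ is small; and when $G$ is bipartite. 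The techniques developed in these cases should then indicate how to organise the general recolouring.
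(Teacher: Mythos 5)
The statement you set out to prove is Conjecture~\ref{mckconj}: it is \emph{open}, and the paper contains no proof of it --- only the lower bound (Observation~\ref{LBobs}) and verifications of special cases (Theorem~\ref{gengraphsthm1}(b), Theorem~\ref{smallkthm}, Theorem~\ref{smallkbipthm}). Your proposal does not close this gap. The telescoping reduction itself is sound as far as it goes: each $G_j$ contains the spanning $k$-connected subgraph $H^*$, so it is $k$-connected, $H^*$ remains a minimum spanning $k$-connected subgraph of every $G_j$, and in fact $h_k(G_j)=h_k(G)$ for all $j$. But this merely transfers the entire content of the conjecture into your unproven Key Lemma, and the difficulty you yourself flag --- a recolouring that repairs all pairs simultaneously while sacrificing at most one colour --- is precisely the open problem, not a technical step awaiting routine verification. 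What you have is a correct reduction plus a research programme, not a proof.

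There is also a concrete danger in the Key Lemma as you state it (for an \emph{arbitrary} edge $e$ with $G'-e$ still $k$-connected): it may be strictly stronger than the conjecture it is meant to imply. If the conjecture holds and $H'$, $H''$ denote minimum spanning $k$-connected subgraphs of $G'$ and of $G'-e$, then
\[
mc_k(G')-mc_k(G'-e)=1+\bigl(e(H'')-e(H')\bigr)+\bigl(h_k(G')-h_k(G'-e)\bigr),
\]
and this exceeds $1$ in any graph where every minimum spanning $k$-connected subgraph attaining $h_k(G')$ uses $e$, yet some minimum spanning $k$-connected subgraph avoids $e$ (then $e(H'')=e(H')$ but $h_k(G'-e)<h_k(G')$). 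Whether such graphs exist is unclear, but nothing rules them out: the paper defines $h_k$ as a maximum precisely to allow minimum spanning subgraphs with different values of $mc_k$. So you should restrict the lemma to edges lying outside a minimum spanning $k$-connected subgraph attaining $h_k$ --- which is all your chain needs --- rather than aim at a general statement that may be false. Your supporting observations are correct: internally disjoint $u$--$v$ paths are indeed pairwise edge-disjoint, and the $k=1$ failure is real (e.g.\ $mc(K_4)=6$ while $mc(K_4-e)\le 4$ by Theorem~\ref{CYthm}), so $k\ge 2$ must enter essentially. Finally, note that the ``rigid regimes'' you propose as first test cases are exactly the cases the paper settles, so the genuinely new work begins beyond them.
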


We note that Li and Li \cite{LL20} made the following analogous conjecture about the parameter $emc_k(G)$.
\begin{conj}\label{emckconj}\emph{\cite{LL20}}
Let $G$ be a $k$-edge-connected graph, where $k\ge 2$, and let $H\subset G$ be a spanning $k$-edge-connected  subgraph with $e(H)$ minimum. Then
\begin{equation}
emc_k(G)=e(G)-e(H)+\bigg\lfloor\frac{k}{2}\bigg\rfloor.\label{LLconjeq}
\end{equation}
\end{conj}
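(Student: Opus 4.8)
Since this is presented as an open conjecture, I sketch the route I would take to prove it, splitting the claimed equality into its two inequalities and flagging where the real difficulty sits. For the lower bound $emc_k(G)\ge e(G)-e(H)+\lfloor k/2\rfloor$ I would reuse the construction behind Observation \ref{LBobs}: fix a minimum spanning $k$-edge-connected subgraph $H$, give the edges of $E(G)\setminus E(H)$ pairwise distinct fresh colours, and colour $H$ itself with $\lfloor k/2\rfloor$ colours in a monochromatic $k$-edge-connected way. Everything then reduces to proving $emc_k(H)\ge\lfloor k/2\rfloor$ for a minimally $k$-edge-connected graph $H$ (indeed the conjecture forces $emc_k(H)=\lfloor k/2\rfloor$ when $G=H$). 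The plan here is to edge-decompose $H$ into $\lfloor k/2\rfloor$ monochromatic spanning blocks $B_1,\dots,B_{\lfloor k/2\rfloor}$, each $B_j$ being a $d_j$-edge-connected spanning subgraph with $d_j\in\{2,3\}$ and $\sum_j d_j=k$ (so all $d_j=2$ when $k$ is even, and exactly one $d_j=3$ when $k$ is odd). Colouring each $B_j$ with its own colour, block $B_j$ then supplies $d_j$ edge-disjoint monochromatic paths between any two vertices, and paths from different blocks are automatically edge-disjoint, giving $\sum_j d_j=k$ in total. Establishing that such a decomposition exists — via edge-disjoint connected-factor theorems in the spirit of Nash-Williams--Tutte, exploiting that $H$ is minimally $k$-edge-connected — is the substance of this direction.

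The harder direction, and what I expect keeps the conjecture open, is the matching upper bound $emc_k(G)\le e(G)-e(H)+\lfloor k/2\rfloor$. Rewriting it as $e(G)-c\ge e(H)-\lfloor k/2\rfloor$, where $c$ is the number of colours used, the goal is to show that any monochromatic $k$-edge-connected colouring is forced to leave at least $e(H)-\lfloor k/2\rfloor$ \emph{redundant} edges. The local engine should be a charging argument at each vertex $v$: each time a colour is reused to route a monochromatic path \emph{through} $v$, it consumes a matched pair of equally coloured edges at $v$, so the $\deg(v)\ge k$ edges at $v$ can accommodate at most $\lfloor k/2\rfloor$ such savings. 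Aggregating these local bounds, while at the same time carving out a genuine spanning $k$-edge-connected subgraph of size $e(H)$ from the monochromatic components that actually carry the $k$ edge-disjoint paths, is what should produce the global constant $\lfloor k/2\rfloor$.

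The main obstacle is making this charging globally consistent. A single monochromatic component can be large and may serve many vertex-pairs at once, so a colour ``saved'' at one vertex need not match up with edges that can be deleted while keeping the graph $k$-edge-connected; one must track simultaneously which components are genuinely needed for the $k$ edge-disjoint paths and verify that the discarded edges still leave a spanning $k$-edge-connected subgraph of the minimum size $e(H)$. I would therefore first settle $k=2$ and $k=3$, where $\lfloor k/2\rfloor=1$ and both the minimum subgraphs and their monochromatic structure are rigid, mirroring the paper's strategy of resolving small $k$ for $mc_k$ in Conjecture \ref{mckconj}; a clean argument there should expose the right invariant for pushing the charging scheme to general $k$.
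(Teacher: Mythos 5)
This statement is an \emph{open conjecture} of Li and Li \cite{LL20}, quoted by the paper; the paper contains no proof of it and only records the known partial cases in Theorem \ref{LLthm} (note that the case $k=2$, which you propose to ``first settle,'' is already part (a) of that theorem). So there is no paper proof to compare against, and your text is, as you acknowledge, a research plan rather than a proof: neither inequality is established, and the upper-bound paragraph never specifies how the local ``matched pairs'' count at a vertex is to be combined with the extraction of a spanning $k$-edge-connected subgraph of exactly $e(H)$ edges --- precisely the step you flag as the obstacle.

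The more serious issue is that the key step of your lower-bound plan provably fails, and the failure is not repairable, because the conjectured formula itself is wrong for some graphs. You want to decompose a minimally $k$-edge-connected $H$ into $\lfloor k/2\rfloor$ edge-disjoint spanning blocks, each $2$-edge-connected (one $3$-edge-connected when $k$ is odd). Take $k=4$ and let $H$ be $4$-regular and $4$-edge-connected; then $H$ is its own unique minimum spanning $4$-edge-connected subgraph (any proper spanning subgraph has a vertex of degree at most $3$), and since a spanning $2$-edge-connected subgraph has at least $n$ edges while $e(H)=2n$, your two blocks would have to be two edge-disjoint Hamilton cycles. The Meredith graph is $4$-regular, $4$-connected and non-Hamiltonian, so no such decomposition exists. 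Worse, making your own charging heuristic precise shows that $emc_4$ of this graph equals $1$, not $\lfloor 4/2\rfloor=2$: in any monochromatic $4$-edge-connected colouring of a $4$-regular graph, the four edge-disjoint monochromatic $u$--$v$ paths must use all four edges at $u$ and all four at $v$, exactly one per path, so the colour multisets satisfy $C(u)=C(v)$ for \emph{every} pair $u,v$; hence each colour class $G_i$ is $d_i$-regular with $\sum_i d_i=4$, and since every pair is joined by $d_i$ edge-disjoint paths inside $G_i$, every used class is a connected spanning subgraph. A connected $1$-regular spanning subgraph is impossible, so a colouring with at least two colours forces $d_1=d_2=2$, i.e.\ both classes are Hamilton cycles. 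Thus any $4$-regular $4$-edge-connected graph that is not Hamilton-decomposable (e.g.\ Meredith's) violates (\ref{LLconjeq}); the same reasoning with the only admissible profile $(2,3)$ rules out $k=5$ on $5$-regular $5$-connected non-Hamiltonian graphs. The realistic goal is therefore not a proof but a corrected statement, replacing the constant $\lfloor k/2\rfloor$ by a quantity depending on the minimum spanning subgraphs, in the spirit of $h_k(G)$ in the paper's Conjecture \ref{mckconj}.
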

They verified some cases of Conjecture \ref{emckconj} as follows.
\begin{thm}\label{LLthm}\emph{\cite{LL20}}
Conjecture \ref{emckconj} is true in the following cases.
\begin{enumerate}
\item[(a)] $k=2$.
\item[(b)] $G=K_{k+1}$ where $k\ge 3$.
\item[(c)] $G=K_{k,n}$ where $n\ge k\ge 3$.
\end{enumerate}
\end{thm}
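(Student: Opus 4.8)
The plan is to prove each of the three cases of Theorem~\ref{LLthm} separately, since they concern quite different graph families. In every case, the upper bound $emc_k(G)\le e(G)-e(H)+\lfloor k/2\rfloor$ is the substantive direction: the matching lower bound follows from the edge-connected analogue of Observation~\ref{LBobs}, once we know that a minimum spanning $k$-edge-connected subgraph $H$ admits a monochromatic $k$-edge-connected colouring using $\lfloor k/2\rfloor$ colours. So I would first record this lower bound uniformly, reducing the work to establishing the upper bound in each of (a), (b), (c).

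For part (a), the key structural fact is that when $k=2$, a minimum spanning $2$-edge-connected subgraph of $G$ has exactly $n$ edges (one more than a spanning tree), so $e(G)-e(H)+1$ is the target. I would argue that in any monochromatic $2$-edge-connected colouring, the monochromatic components must be rich enough to supply two edge-disjoint monochromatic paths between every pair of vertices; counting the minimum number of edges that a colour class must contribute to maintain this redundancy, I would show that one cannot afford more than $e(G)-n+1$ colours. The idea is to bound the total number of colours by tracking, for each colour, how many edges it uses beyond the first, and to show that the spanning $2$-edge-connected requirement forces at least $n$ edges to be ``shared'' among too few colours. This parallels the $\chi(G)$-free lower end of the Caro--Yuster bound in Theorem~\ref{CYthm}, specialised to the $2$-edge-connected setting.

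For parts (b) and (c), the graphs are explicit, so $e(G)$ and $e(H)$ are known quantities, and the task reduces to a direct extremal computation. For $K_{k+1}$ we have $e(G)=\binom{k+1}{2}$ and the minimum spanning $k$-edge-connected subgraph is $G$ itself, so the claim is $emc_k(K_{k+1})=\lfloor k/2\rfloor$; the heart of the matter is to show that any monochromatic $k$-edge-connected colouring of $K_{k+1}$ uses at most $\lfloor k/2\rfloor$ colours, which I would approach by analysing how the monochromatic components on $k+1$ vertices must overlap to guarantee $k$ edge-disjoint monochromatic paths between each pair. For $K_{k,n}$, I would use the bipartite structure: every path between two vertices on the same side has even length and must pass through the small side, so the colour classes are forced to concentrate, and I would extract the bound $\lfloor k/2\rfloor$ from the degree-$k$ vertices on the large side.

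The main obstacle I anticipate is the upper bound argument in part (a), for general $G$. Unlike the complete or complete-bipartite cases, here $G$ is arbitrary, so one cannot reason about a fixed small structure; instead one must show that every monochromatic $2$-edge-connected colouring ``wastes'' at least $e(H)-1=n-1$ edges relative to the colour count, uniformly over all $G$. The difficulty is that the two edge-disjoint monochromatic paths connecting a pair need not lie in a single monochromatic component, and different pairs may reuse the same monochromatic structure, so a naive edge-accounting double-counts. I expect the crux to be a careful argument — likely via contracting or deleting monochromatic components and invoking $2$-edge-connectivity of the quotient — showing that the colour classes, taken together, must contain a connected spanning substructure carrying at least $n$ edges, thereby capping the number of colours at $e(G)-n+1$.
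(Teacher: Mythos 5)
A preliminary remark: the paper does not prove Theorem \ref{LLthm} at all --- it is quoted from Li and Li \cite{LL20} --- so there is no in-paper argument to compare yours against, and I can only judge the proposal on its own terms. On those terms, part (a) rests on a false structural claim. You assert that for $k=2$ a minimum spanning $2$-edge-connected subgraph $H$ of $G$ has exactly $n$ edges, ``one more than a spanning tree''. A spanning $2$-edge-connected subgraph with $n$ edges has minimum degree $2$ and is connected, hence is a Hamilton cycle; so $e(H)=n$ holds precisely when $G$ is Hamiltonian, and fails otherwise. For a non-Hamiltonian $2$-edge-connected graph such as $K_{2,3}$, every spanning $2$-edge-connected subgraph must retain both edges at each degree-$2$ vertex, which forces $H=K_{2,3}$ and $e(H)=6>5=n$. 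Consequently your target value $e(G)-n+1$ is not the value asserted in Conjecture \ref{emckconj}: for $K_{2,3}$ the conjectured value is $e(G)-e(H)+1=1$, whereas your plan aims at an upper bound of $2$, and your own lower-bound construction yields only $1$, so the two bounds you propose can never meet on non-Hamiltonian graphs. The paper itself is careful on exactly this point: in Section \ref{smallksect} it invokes the $k=2$ case of Theorem \ref{LLthm} only under the extra hypothesis $e(H)=n$, i.e.\ when $H$ is a Hamilton cycle. A correct proof of part (a) must handle an arbitrary minimum spanning $2$-edge-connected subgraph, and the edge-accounting you sketch (charging each colour for its edges ``beyond the first'') is not developed enough to show how the quantity $e(H)$, rather than $n$, would emerge.

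Two further points. First, parts (b) and (c) are statements of intent rather than arguments: ``analysing how the monochromatic components must overlap'' and ``extract the bound from the degree-$k$ vertices'' identify the relevant objects but supply no mechanism for the upper bound $\lfloor k/2\rfloor$, which is the substantive direction there (note that in both cases $H=G$, so the claim is exactly $emc_k(G)=\lfloor k/2\rfloor$). Second, the lower bound is not as routine as your opening paragraph suggests: the edge-connected analogue of Observation \ref{LBobs} requires a monochromatic $k$-edge-connected colouring of $H$ with $\lfloor k/2\rfloor$ colours, and for odd $k$ this already needs a genuine construction --- e.g.\ for $K_{k+1}$ a decomposition into $(k-1)/2$ Hamilton cycles plus a perfect matching, arranged so that the colour class containing the matching is $3$-edge-connected --- which the phrase ``once we know'' conceals. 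As written, only the trivial $k=2$ lower bound (colour all of $H$ with one colour) is actually justified.
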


Since Observation \ref{LBobs} has a lower bound for the parameter $mc_k(G)$, we shall focus on the upper bound. We first make the following observation, which allows us to simplify the edge-colourings of $G$ that we will consider.
\begin{obs}\label{UBobs}
Let $G$ be a $k$-connected graph. Suppose we would like to prove that $mc_k(G)\le M$. Then, it suffices to show that for every monochromatic $k$-connected colouring of $G$ where every colour induces exactly one non-trivial component, there are at most $M$ colours.
\end{obs}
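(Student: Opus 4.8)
The plan is to prove the observation in the following constructive form: starting from an arbitrary monochromatic $k$-connected colouring $\phi$ of $G$ using $c$ colours, I will build a monochromatic $k$-connected colouring $\phi'$ using at least $c$ colours in which every colour induces exactly one non-trivial component. Granting this, the observation follows immediately. Take $\phi$ to be an optimal colouring, so $c=mc_k(G)$; applying the hypothesis to the colouring $\phi'$ gives $mc_k(G)=c\le(\text{number of colours of }\phi')\le M$, as desired.

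The construction is simply to refine $\phi$ by splitting each colour class into its components. For each colour $i$ appearing under $\phi$, list the non-trivial components $C_{i,1},\dots,C_{i,t_i}$ of the induced subgraph $G_i$ (each is connected and contains at least one edge; isolated vertices are discarded). Define $\phi'$ by giving all edges of each $C_{i,j}$ a new colour, assigning distinct colours to distinct components. Since every colour appearing under $\phi$ has $t_i\ge 1$ non-trivial components, the number of colours of $\phi'$ equals $\sum_i t_i\ge c$. Moreover, by construction each colour class of $\phi'$ is precisely the edge set of a single connected subgraph $C_{i,j}$, so every colour of $\phi'$ induces exactly one non-trivial component.

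It remains to check that $\phi'$ is still monochromatic $k$-connected, and this is the only step requiring care. Fix two vertices $u,v$, and let $P_1,\dots,P_k$ be $k$ disjoint monochromatic paths joining them under $\phi$. The crucial point is that a $\phi$-monochromatic path, say of colour $i$, is a connected subgraph of $G_i$ and therefore lies entirely inside a single component of $G_i$; hence all of its edges receive one and the same colour under $\phi'$, so each $P_\ell$ remains monochromatic under $\phi'$. As subgraphs of $G$ the paths $P_1,\dots,P_k$ are unchanged, so they remain internally vertex-disjoint. Thus $u$ and $v$ are still joined by $k$ disjoint monochromatic paths under $\phi'$, and since $u,v$ were arbitrary, $\phi'$ is monochromatic $k$-connected.

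The main, and indeed the only, subtlety is the observation that every monochromatic path is confined to a single monochromatic component. Once this is in hand, the refinement never merges two colour classes into one, so it cannot decrease the number of colours, and it cannot destroy any of the monochromatic paths witnessing $k$-connectivity; everything else is routine bookkeeping.
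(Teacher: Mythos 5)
Your proof is correct and follows essentially the same route as the paper: both recolour the non-trivial monochromatic components with distinct new colours and note that, since every monochromatic path lies inside a single such component, the resulting colouring remains monochromatic $k$-connected with at least as many colours. Your write-up merely makes explicit the path-preservation argument that the paper leaves implicit.
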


Indeed, suppose that $G$ is given a monochromatic $k$-connected colouring, using exactly $r'$ colours. If the subgraph induced by  some colour has at least two non-trivial components, then we may recolour one of the components with a new colour. We may then repeat this recolouring procedure until we obtain a colouring where every colour induces exactly one non-trivial component. We obtain a new colouring of $G$, using exactly $r$ colours for some $r\ge r'$, which is still monochromatic $k$-connected. Thus, if we can show that $r\le M$, then $r'\le r\le M$, and this implies that $mc_k(G)\le M$.

%***In view of Observation \ref{UBobs}, whenever we consider an edge-colouring of $G$ where every colour induces exactly one non-trivial component, let $F_i\subset G_i$ denote the non-trivial component in colour $i$.***

%\textcolor{red}{It was also proved that $emc_k(K_{k+1}) = \lfloor \frac{k}{2} \rfloor$ for $k\ge 3$. ***More?...***}
%
%\textcolor{red}{Henry: Possible tasks?
%\begin{itemize}
%\item $mc_2(G)=e(G)-e(H)+1$
%\item Estimate for $mc_k(K_n)$
%\item $mc_k(G)$ for some other special graphs $G$, like complete bipartite?
%\item Compare $mc_k$ and $emc_k$?
%\end{itemize}}

%In this paper, we shall similarly verify Conjecture \ref{mckconj} for several classes of graphs. In Section \ref{gensect}, we prove an upper bound for $mc_k(G)$, for any $k$-connected graph $G$ where $k\ge 2$. This result implies that $mc_k(G)=1$ when $e(G)$ is minimum, i.e., $e(G)= \lceil\frac{kn}{2}\rceil$. In Section \ref{smallksect}, we verify Conjecture \ref{mckconj} when $e(H)=\lceil\frac{kn}{2}\rceil$ and $k\in\{2,3,4,5\}$. In Sections ?, and ?, we shall prove, respectively, the cases of Conjecture \ref{mckconj} for $2$-connected graphs,  complete graphs, and complete bipartite graphs.
%and multipartite graphs.

Next, we note that for $n>k\ge 2$, any $k$-connected graph $G$ on $n$ vertices has at least $\lceil\frac{kn}{2}\rceil$ edges, since $\delta(G)\ge k$. Harary \cite{Har62} gave examples of $k$-connected graphs which show that the value $\lceil\frac{kn}{2}\rceil$ is best possible for the minimum number of edges.

\begin{thm}\label{Harthm} \textup{\cite{Har62}}
Let $n>k\ge 2$, and $G$ be a $k$-connected graph on $n$ vertices. Then $e(G)\ge\lceil\frac{kn}{2}\rceil$. Moreover, there exists a $k$-connected graph $H_{n,k}$ on $n$ vertices with $e(H_{n,k})=\lceil\frac{kn}{2}\rceil$.
\end{thm}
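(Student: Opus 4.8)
The plan is to prove the two assertions separately. The inequality $e(G)\ge\lceil\frac{kn}{2}\rceil$ is the easy direction and follows from the degree condition already noted in the text, while the sharpness comes from exhibiting Harary's explicit extremal graph $H_{n,k}$.

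For the lower bound, I would first invoke Whitney's inequality $\kappa(G)\le\lambda(G)\le\delta(G)$, where $\kappa$ and $\lambda$ denote the vertex- and edge-connectivity. Since $G$ is $k$-connected we have $\kappa(G)\ge k$, hence $\delta(G)\ge k$, so every vertex has degree at least $k$. The handshaking lemma then gives $2e(G)=\sum_{v\in V(G)}\deg_G(v)\ge kn$, so $e(G)\ge\frac{kn}{2}$; as $e(G)$ is an integer, this yields $e(G)\ge\lceil\frac{kn}{2}\rceil$.

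For the construction I would exhibit $H_{n,k}$ by placing $n$ vertices $0,1,\dots,n-1$ cyclically and splitting into three cases by the parities of $k$ and $n$. When $k=2m$ is even, join $i$ to every $j$ at cyclic distance at most $m$; the graph is $2m$-regular, so $e=\frac{kn}{2}$. When $k=2m+1$ is odd and $n$ is even, add to the even-case graph the ``diameter'' matching $\{i,i+\frac{n}{2}\}$, producing a $k$-regular graph with $e=\frac{kn}{2}$. When both $k$ and $n$ are odd, add instead a near-diameter matching together with one extra edge, arranged so that exactly one vertex has degree $k+1$ and all others have degree $k$; then $2e=kn+1$ and $e=\frac{kn+1}{2}=\lceil\frac{kn}{2}\rceil$. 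In every case the edge count meets the lower bound, so it remains only to verify that $H_{n,k}$ is $k$-connected.

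For connectivity I would argue directly about cuts rather than appeal to Menger's theorem. Suppose a set $S$ with $|S|\le k-1$ is deleted; the surviving vertices form maximal cyclic arcs separated by ``gaps'' (maximal runs of deleted vertices), and consecutive vertices within an arc are adjacent. In the even case $k=2m$, two arcs flanking a gap of length at most $m-1$ are joined directly by an edge, and since $|S|\le 2m-1$ at most one gap can have length at least $m$; hence the cyclic sequence of arcs is bridged everywhere except possibly at one gap, leaving all arcs connected along a path, so $H_{n,k}-S$ is connected. The odd cases are handled analogously, but now $|S|$ may be as large as $2m$, so two large gaps can occur, and this is the main obstacle: one must show that the added diameter (respectively near-diameter) edges always permit crossing such gaps, which requires a short case analysis on the positions of the gaps relative to the matching edges. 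Once this is checked, no set of fewer than $k$ vertices separates $H_{n,k}$, so $\kappa(H_{n,k})\ge k$ and the construction is complete.
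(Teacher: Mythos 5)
The paper does not actually prove this statement: it derives the lower bound in one line of the preceding text (a $k$-connected graph has $\delta(G)\ge k$, hence $2e(G)\ge kn$) and cites \cite{Har62} for the extremal construction, so your proposal has to be measured against Harary's standard argument rather than against anything written in the paper. Your lower-bound half is correct and complete; the appeal to Whitney's inequality is fine, though slightly more than necessary, since $\delta(G)\ge k$ follows directly from the definition of $k$-connectivity (if $\deg_G(v)\le k-1$, then $N_G(v)$ is a set of at most $k-1$ vertices whose removal isolates $v$, and $n>k$ guarantees a vertex outside $N_G(v)\cup\{v\}$). Your construction is the correct Harary graph in all three parity cases, with the right edge counts, and your even-$k$ connectivity argument (when $|S|\le 2m-1$ at most one gap can have length at least $m$, and every shorter gap is bridged by a direct edge) is sound.

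The gap is that for odd $k$ you announce, but never perform, the case analysis that is the only genuinely nontrivial step of the theorem. To close it: if $k=2m+1$ and $|S|\le 2m$, then at most two gaps have length at least $m$; if at most one does, the even-case argument applies verbatim; if two do, then both have length exactly $m$, $S$ is their union, and the surviving vertices form exactly two arcs $A$ and $B$, say with $|A|=a\le b=|B|$. Label the cycle so that $A=\{0,\dots,a-1\}$ is followed by one gap, then $B$, then the other gap, so $n=a+b+2m$. For $n$ even, the antipode of vertex $0$ is $n/2=(a+b)/2+m$, which lies in $B$ precisely because $a\le b$ and $a+b\ge 2$; hence a diameter edge joins $A$ to $B$ and $H_{n,k}-S$ is connected. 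The case of $n$ odd with the near-diameter edges is handled the same way with an index shift. Without this (or an equivalent) computation, the $k$-connectivity of $H_{n,k}$ --- and hence the ``moreover'' half of the theorem --- is asserted rather than proved.
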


In the following result, we obtain an upper bound for $mc_k(G)$, as well as the answer for $mc_k(G)$ when $G$ has the minimum possible number of edges.

\begin{thm}\label{gengraphsthm1}
Let $n>k\ge 2$, and $G$ be a $k$-connected graph on $n$ vertices.
\begin{enumerate}
\item[(a)] We have
\begin{equation}
mc_k(G)\le e(G)-\bigg\lceil\frac{k{n\choose 2}-e(G)}{n-2}\bigg\rceil+1.\label{gengraphscoreq}
\end{equation}
\item[(b)] Suppose that $G$ has the minimum possible number of edges, i.e., $e(G)=\lceil\frac{kn}{2}\rceil$. Then $mc_k(G)=1$.
\end{enumerate}
\end{thm}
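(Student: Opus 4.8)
The plan is to prove part (a) first and then deduce part (b) by substituting $e(G)=\lceil kn/2\rceil$ into the bound of part (a). For part (a), by Observation \ref{UBobs} it suffices to bound the number of colours $r$ in a monochromatic $k$-connected colouring of $G$ in which every colour induces exactly one non-trivial component; write $C_1,\dots,C_r$ for these components, and let $C_i$ have $t_i$ vertices and $m_i$ edges, so $e(G)=\sum_i m_i$ and $m_i\ge t_i-1$. Unravelling the ceiling (using that the right-hand side is an integer), the assertion (\ref{gengraphscoreq}) is equivalent to the clean inequality
\[
k\binom{n}{2}\le (n-1)e(G)-(n-2)(r-1),
\]
and it is this form I would aim to establish.

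The engine of the argument is a count of \emph{long} monochromatic paths, where I call a path long if it has length at least two. For each pair $\{u,v\}$ fix $k$ disjoint monochromatic paths witnessing the colouring. At most one of them can be a single edge (namely the edge $uv$, when present), so the pair contributes at least $k-1$ long paths, and exactly $k$ when $uv\notin E(G)$. Summing over all pairs, the total number $\ell$ of long monochromatic paths used satisfies
\[
\ell\ \ge\ (k-1)\binom{n}{2}+\Big(\binom{n}{2}-e(G)\Big)\ =\ k\binom{n}{2}-e(G),
\]
and this lower bound is robust, holding for \emph{any} choice of the witnessing paths. For a matching upper bound I would charge each long path to one of its two end-edges, read as a directed edge leaving an endpoint. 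The key local fact is that at any fixed vertex the $k$ disjoint paths of a single pair use distinct edges, so for a fixed directed edge $(u\to w)$ at most one chosen path per pair begins with it; moreover the far endpoint of such a path lies in the monochromatic component containing $uw$. Hence the number of long paths beginning with a given directed edge of colour $i$ is at most $t_i-2$, and carrying this through bounds $\ell$ in terms of the $m_i$ and $t_i$; combined with the lower bound this should deliver the displayed inequality and hence part (a).

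The main obstacle is precisely this last combining step. The crude charging allows every colour to supply paths for every pair, and it overcounts badly when several monochromatic components are large: for instance when $G$ is an edge-disjoint union of spanning cycles, the naive bound $\ell\le\sum_i m_i(t_i-2)$ is too weak to recover the saving $(n-2)(r-1)$, even though the target inequality still holds. The real difficulty is therefore to make an \emph{economical} choice of the $k$ disjoint paths for each pair (or to argue globally across the components) so that both the factor $n-2$ and the term $-(n-2)(r-1)$ emerge; handling large or spanning monochromatic components is where the work lies, and where the full strength of the monochromatic $k$-connected hypothesis — not merely that each colour class is connected — must be used.

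Finally, for part (b) I would substitute $e(G)=\lceil kn/2\rceil$ into (\ref{gengraphscoreq}) and evaluate the ceiling. When $kn$ is even we have $e(G)=\tfrac{kn}{2}$ and $k\binom{n}{2}-e(G)=\tfrac{kn(n-2)}{2}=(n-2)e(G)$, so the fraction equals the integer $e(G)$ and (\ref{gengraphscoreq}) reads $mc_k(G)\le 1$. When $kn$ is odd (so $k$ and $n$ are both odd, whence $n>k\ge 3$ forces $n\ge 5$) one has $e(G)=\tfrac{kn+1}{2}$ and a short computation gives
\[
\frac{k\binom{n}{2}-e(G)}{n-2}=e(G)-\frac{1}{2}\cdot\frac{n-1}{n-2},
\]
which lies strictly between $e(G)-1$ and $e(G)$ for $n\ge 5$; thus the ceiling is again $e(G)$ and (\ref{gengraphscoreq}) yields $mc_k(G)\le 1$. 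Since $mc_k(G)\ge 1$ always by Observation \ref{LBobs}, we conclude $mc_k(G)=1$.
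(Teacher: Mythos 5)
Your deduction of part (b) from part (a) is correct and coincides with the paper's own calculation in both parities of $kn$, and your framing of part (a) also matches the paper's: after the reduction via Observation \ref{UBobs}, the bound (\ref{gengraphscoreq}) is equivalent to $k\binom{n}{2}\le (n-1)e(G)-(n-2)(r-1)$, and the quantity you call $\ell$ is the paper's weight $w(f)$, your lower bound $\ell\ge k\binom{n}{2}-e(G)$ being exactly how the paper passes from Theorem \ref{gengraphsthm2} to Theorem \ref{gengraphsthm1}(a). The gap is the complementary upper bound $\ell\le (n-2)(e(G)-r+1)$: this is precisely the statement of the paper's Theorem \ref{gengraphsthm2}, it is the entire content of part (a), and you explicitly concede that you cannot prove it. Your charging of super-paths to end-edges gives at best $\ell\le\sum_i m_i(t_i-2)\le (n-2)e(G)$ (in your notation, $m_i$ edges and $t_i$ vertices in the colour-$i$ component), which is the correct bound for $r=1$ but is uniform in $r$; the additive saving $(n-2)(r-1)$, which is what distinguishes a colouring with many colours from a monochromatic one, cannot come from any such per-colour count, as your own spanning-cycles example shows, because the count never couples different colour classes. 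So the proposal, as written, proves part (b) only conditionally on part (a).

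For comparison, the paper obtains the saving not by choosing the witnessing paths more economically but by induction on $r$ together with an analysis of the equality case. The base case $r=1$ (Lemma \ref{gengraphslem}(a)) uses the per-pair estimate $f(u,v)\le\min(\deg_G(u),\deg_G(v))$, minus $1$ when $uv\in E(G)$, averaged at each vertex over its $n-1$ pairs, which is where the factor $n-2$ arises. For the induction step, assuming for contradiction that $e(G)=\lceil w(f)/(n-2)\rceil+r-2$, splitting off each colour class $G_i$ and applying the base case to $G_i$ and induction to $G-E(G_i)$ forces equality throughout; Lemma \ref{gengraphslem}(b) then shows every $G_i$ is regular or almost regular with $\Delta(G_i)\ge 3$ (Claim \ref{clm:Delta}). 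The paper then introduces an auxiliary graph $F$ whose edges are the pairs $\{x,y\}$ for which $f(x,y)$ falls strictly below its trivial upper bound $\sum_i m_{G_i}(x,y)$, shows $e(F)$ is small (via $e(F)\le t$ and Claim \ref{claim:t}), and shows $e(F)$ is large via Claim \ref{claim:F}: if $\{x,y\}$ were saturated, then all colour-$i$ edges at $x$ and all colour-$j$ edges at $y$ would lie on the disjoint witnessing super-paths, forcing two of these paths to share a common neighbour $v$ of $x$ and $y$ as an internal vertex. Balancing the two estimates gives $\Delta(G)>n-1$, a contradiction. Machinery of this kind — an induction on the number of colours with an extremal analysis coupling the colour classes — is exactly what is missing from, and indispensable to, your argument.
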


We note that if $G$ has a minimum spanning $k$-connected subgraph with $\lceil\frac{kn}{2}\rceil$ edges, then by Theorem \ref{gengraphsthm1}(b), we have $h_k(G)=1$. Thus, we have the following version of Conjecture \ref{mckconj}.

\begin{conj}\label{mckconj1}
Let $n>k\ge 2$. Let $G$ be a $k$-connected graph on $n$ vertices, and $H\subset G$ be a minimum spanning $k$-connected  subgraph. If $e(H)=\lceil\frac{kn}{2}\rceil$, then
\[
mc_k(G)=e(G)-\bigg\lceil\frac{kn}{2}\bigg\rceil+1.
\]
\end{conj}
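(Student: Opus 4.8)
The lower bound requires no new work. Since $e(H)=\lceil\frac{kn}{2}\rceil$ is the minimum possible number of edges of a $k$-connected graph on $n$ vertices by Theorem \ref{Harthm}, every minimum spanning $k$-connected subgraph of $G$ has exactly $\lceil\frac{kn}{2}\rceil$ edges, and so Theorem \ref{gengraphsthm1}(b) gives $h_k(G)=1$. Observation \ref{LBobs} then yields $mc_k(G)\ge e(G)-\lceil\frac{kn}{2}\rceil+1$. Hence the entire task is the matching upper bound. By Observation \ref{UBobs}, it suffices to bound the number of colours $r$ in an arbitrary monochromatic $k$-connected colouring of $G$ in which each colour induces exactly one non-trivial component. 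Writing $m_i\ge 1$ for the number of edges of colour $i$, the desired bound $r\le e(G)-\lceil\frac{kn}{2}\rceil+1$ is equivalent to $e(G)-r=\sum_{i=1}^{r}(m_i-1)\ge\lceil\frac{kn}{2}\rceil-1$.

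I would prove the upper bound by induction on the number of surplus edges $e(G)-\lceil\frac{kn}{2}\rceil$. The base case $e(G)=\lceil\frac{kn}{2}\rceil$ is precisely Theorem \ref{gengraphsthm1}(b), which gives $mc_k(G)=1$. For the inductive step, suppose $e(G)>\lceil\frac{kn}{2}\rceil$ and fix $f\in E(G)\setminus E(H)$. Then $H\subseteq G-f$, so $G-f$ is $k$-connected and $H$ remains a minimum spanning $k$-connected subgraph of $G-f$ with $\lceil\frac{kn}{2}\rceil$ edges; thus $G-f$ satisfies the hypotheses of the statement, and the inductive hypothesis gives $mc_k(G-f)=e(G)-\lceil\frac{kn}{2}\rceil$. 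Now take any monochromatic $k$-connected colouring of $G$ with $r$ colours as above. Since deleting a single edge destroys at most one colour, it suffices to find one surplus edge $f\in E(G)\setminus E(H)$ whose deletion keeps the colouring monochromatic $k$-connected on $G-f$: the restricted colouring then uses at least $r-1$ colours, whence $r-1\le mc_k(G-f)=e(G)-\lceil\frac{kn}{2}\rceil$, which closes the induction.

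The whole proof therefore reduces to the following deletion lemma, which I expect to be the main obstacle: in every monochromatic $k$-connected colouring of $G$, some surplus edge $f\in E(G)\setminus E(H)$ can be deleted so that every pair of vertices still has $k$ disjoint monochromatic paths in $G-f$. The difficulty is that a surplus edge $f=xy$ could be indispensable, lying on the only available system of $k$ disjoint monochromatic paths for some pair, with no monochromatic detour avoiding it. To prove the lemma I would suppose for contradiction that every surplus edge is critical in this sense, and seek a contradiction by combining the robustness of the monochromatic components---each pair is already served by $k$ disjoint monochromatic paths---with the global edge-count estimate underlying Theorem \ref{gengraphsthm1}(a).

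Finally, I expect the lemma to be provable when $k$ is small because the minimum spanning $k$-connected subgraph is then explicit. For $k=2$, the hypothesis $e(H)=n$ forces $H$ to be a Hamilton cycle of $G$, and its rotational structure allows one to reroute a pair's two monochromatic paths around the cycle after removing a chord; the Harary graphs $H_{n,k}$ of Theorem \ref{Harthm} should play the same role for the next few values of $k$, mirroring the analogous cases $k=2$ and $G=K_{k+1},K_{k,n}$ handled for $emc_k$ in Theorem \ref{LLthm}. I anticipate that the general conjecture remains open precisely because no such explicit rerouting is available for an arbitrary minimum $k$-connected graph.
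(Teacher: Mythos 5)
Your proposal is not a proof, and to be fair you half-acknowledge this: everything is funnelled into the ``deletion lemma'' (some surplus edge $f\in E(G)\setminus E(H)$ can be removed while preserving monochromatic $k$-connectivity of the restricted colouring), and for that lemma you offer only the intention to ``seek a contradiction'' from robustness plus edge counts. That lemma is not a technical obstacle to be polished away later; it \emph{is} the conjecture. The danger case is exactly the one you name: a surplus edge $xy$ whose colour is unique to it may itself be one of the $k$ disjoint monochromatic paths between $x$ and $y$, with no monochromatic detour in $G-f$; to close the induction you must show that not \emph{every} surplus edge is critical in this sense simultaneously, and neither Theorem \ref{gengraphsthm1}(a) nor the disjoint-paths hypothesis gives any leverage on this. (Your reduction scaffolding is otherwise sound: all minimum spanning $k$-connected subgraphs have exactly $\lceil\frac{kn}{2}\rceil$ edges, so $h_k(G)=1$ by Theorem \ref{gengraphsthm1}(b) and Observation \ref{LBobs} gives the lower bound; restricting to $f\notin E(H)$ correctly keeps the inductive hypotheses valid for $G-f$; and deleting one edge loses at most one colour.) But the statement is an open conjecture in this paper --- there is no proof of it to match, only partial results --- and your argument leaves the entire difficulty unresolved.

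It is also worth correcting your guess about how the known cases go, because the paper's route for $k\in\{2,3,4,5\}$ (Theorem \ref{smallkthm}) is quite different from edge-deletion induction and makes no use of explicit rerouting in Harary graphs: in fact the upper-bound proof there explicitly does not use the hypothesis $e(H)=\lceil\frac{kn}{2}\rceil$ at all. Instead, given a colouring in which each colour induces one non-trivial component (Observation \ref{UBobs}), the paper deletes the single-edge colour classes to form a subgraph $F$ and proves the counting inequality $2e(F)\ge kn+2(r'-1)$ directly, via colour-degree and colour-multiset arguments at the vertices (observations such as $\delta(F)\ge k$, the $(k-1)$-connectivity of $F$, and the multiset identities $C(u)=C(v)$ for minimum-degree vertices), splitting into cases according to $\delta^c(F)$. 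Likewise the general upper bound feeding Theorem \ref{gengraphsthm1} comes from Theorem \ref{gengraphsthm2}, proved by induction on the number of colours $r$ (peeling off one colour class $G_i$ and applying Lemma \ref{gengraphslem} to it), not by induction on surplus edges. If you want to pursue your scheme, the honest next step is to test the deletion lemma against adversarial colourings in which many surplus edges carry unique colours and serve as length-one paths of required path systems; absent a proof of it, the proposal establishes nothing beyond what Observation \ref{LBobs} and Theorem \ref{gengraphsthm1}(b) already give.
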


To prove Theorem \ref{gengraphsthm1}, we shall prove another result. Let $G$ be a graph. For a function  $f: {V(G)\choose 2} \rightarrow \mathbb{Z}_{\ge 0}$, we denote $f(\{u,v\})$ by $f(u,v)$ for simplicity. We define the \emph{weight} of $f$ to be
\[
w(f)=\sum_{\{u,v\}} f(u,v),
\]
where throughout, $\sum_{\{u,v\}}$ means the sum is taken over all $\{u,v\}\in {V(G)\choose 2}$. Let
\[
m_G(u,v)=
\left\{
\begin{array}{l@{\quad\:}l}
\min(\deg_G(u),\deg_G(v))-1, & \textup{if }uv\in E(G),\\[0.5ex]
\min(\deg_G(u),\deg_G(v)), & \textup{if }uv\not\in E(G).
\end{array}
\right.
\]
We shall prove the following theorem.

\begin{thm}\label{gengraphsthm2}
Let $G$ be a graph on $n\ge 3$ vertices, with an edge-colouring using exactly $r$ colours. Let $f: {V(G)\choose 2} \rightarrow \mathbb{Z}_{\ge 0}$. For $u,v\in V(G)$, suppose that there are $f(u,v)$ disjoint monochromatic super-paths connecting $u$ and $v$. Then
\begin{equation}
e(G)\ge \bigg\lceil \frac{w(f)}{n-2}\bigg\rceil + r-1.\label{gengraphsthmeq}
\end{equation}
\end{thm}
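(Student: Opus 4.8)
The plan is to prove the equivalent statement $w(f)\le (n-2)(e(G)-r+1)$. Since $e(G)$, $r$ and $n$ are integers, the ceiling in (\ref{gengraphsthmeq}) may be dropped: the bound $e(G)\ge\lceil w(f)/(n-2)\rceil+r-1$ holds if and only if $w(f)\le(n-2)(e(G)-r+1)$. First I would reduce to a convenient colouring. If some colour induces more than one non-trivial component, recolour each such component with a fresh colour; every monochromatic super-path lies inside a single component, so the same family of $f(u,v)$ disjoint super-paths survives, while $w(f)$, $e(G)$ and $n$ are unchanged and the number of colours only increases. As increasing $r$ strengthens the target inequality, it suffices to treat the case where each colour class is connected, so that $r$ equals the number of non-trivial monochromatic components $G_1,\dots,G_r$, where $G_i$ spans a vertex set of size $s_i$, has $m_i$ edges, and $\sum_i m_i=e(G)$.

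The main tool is a local edge-count, and this is exactly where $m_G$ enters. Fix a pair $\{u,v\}$ and its $f(u,v)$ disjoint monochromatic super-paths. Being super-paths, none of them is the edge $uv$; being internally disjoint, they use distinct first edges at $u$ and distinct last edges at $v$. Hence $f(u,v)\le m_G(u,v)$. Summing over pairs and using $\min(a,b)\le\tfrac12(a+b)$ gives the baseline bound
\[
w(f)=\sum_{\{u,v\}}f(u,v)\le\sum_{\{u,v\}}m_G(u,v)=\sum_{\{u,v\}}\min(\deg_G u,\deg_G v)-e(G)\le(n-2)\,e(G).
\]
Equivalently, charging each super-path $\tfrac12$ to its first edge and $\tfrac12$ to its last edge expresses $w(f)$ as $\sum_e\mathrm{ch}(e)$, and for an edge $e$ of colour $i$ one checks $\mathrm{ch}(e)\le s_i-2\le n-2$, since at most $s_i-2$ targets can be reached along paths leaving $e$ at either of its endpoints. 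This per-edge form is the vehicle I would use for the improvement.

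The remaining task, and the main obstacle, is to improve the baseline by exactly $(n-2)(r-1)$, i.e.\ to pay for the $r-1$ extra colours. The baseline is tight only for $K_n$ with $r=1$, and degree-sum slack alone does not suffice: when the components $G_i$ are regular (for instance two edge-disjoint Hamilton cycles) the inequality $\min(a,b)\le\tfrac12(a+b)$ becomes an equality, and yet the bound must still be sharpened by $(n-2)$. The extra saving must therefore come from the fact that, for each pair $\{u,v\}$, the chosen super-paths of \emph{all} colours are mutually internally disjoint, so $\sum_i f_i(u,v)=f(u,v)\le n-2$, forcing the $r$ colours to compete for the $n-2$ available internal vertices. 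The plan is to turn this competition, together with the connectivity of the $r$ components, into a total charge deficit $\sum_e\big((n-2)-\mathrm{ch}(e)\big)\ge(n-2)(r-1)$; concretely one would analyse the $G_i$ one at a time, bounding $\mathrm{ch}(e)$ on a spanning tree of each $G_i$ through the degrees inside $G_i$, while using the global budget $f(u,v)\le n-2$ to absorb the dense and regular components that a purely per-component estimate cannot control.

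I expect this last step to be the crux. Every local or per-component estimate I can write down is dominated by $(n-2)\sum_i m_i=(n-2)e(G)$ and individually fails to recover the $-(n-2)(r-1)$ term, so the argument must genuinely interleave the mutual disjointness across colours with the connectivity of the individual colour classes rather than bound one colour at a time. Once $w(f)\le(n-2)(e(G)-r+1)$ is established, rearranging and reinstating the ceiling yields (\ref{gengraphsthmeq}).
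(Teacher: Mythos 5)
Your proposal is incomplete: what it actually establishes is only the $r=1$ baseline $w(f)\le (n-2)e(G)$ (via $f(u,v)\le m_G(u,v)$, which is correct and amounts to Lemma \ref{gengraphslem}(a) of the paper), together with the standard reduction to colourings in which every colour induces exactly one non-trivial component. The entire content of the theorem for $r\ge 2$ --- gaining the additional $(n-2)(r-1)$ --- is left as a ``plan'' (the charge-deficit inequality $\sum_e\big((n-2)-\mathrm{ch}(e)\big)\ge(n-2)(r-1)$), and you yourself concede that every local or per-component estimate you can write down fails to produce it. That concession is accurate, and it is exactly where the proof stands or falls: an intention to ``interleave the mutual disjointness across colours with the connectivity of the individual colour classes'' is not yet an argument.

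For comparison, the paper closes this gap by induction on $r$, with a two-stage structure your sketch never reaches. First, for each colour $i$ it splits $G$ into $G_i$ and $H_i=G-E(G_i)$; since $f(u,v)\le f_i(u,v)+g_i(u,v)$, where $f_i$ counts disjoint super-paths inside $G_i$ and $g_i$ counts disjoint monochromatic super-paths inside $H_i$, applying the base-case lemma to $G_i$ and the induction hypothesis to $H_i$ (which uses $r-1$ colours) already yields $e(G)\ge\lceil w(f)/(n-2)\rceil+r-2$, i.e.\ the target up to an additive $1$. Second --- and this is the bulk of the work --- it assumes equality and derives a contradiction: equality forces every colour class $G_i$ to be regular or nearly regular with $e(G_i)=\lceil w(m_i)/(n-2)\rceil$ (Lemma \ref{gengraphslem}(b)), and an analysis of the deficiency graph $F$, whose edges are precisely the pairs with $f(u,v)<\sum_i m_i(u,v)$ (Claims \ref{claim:t}--\ref{clm:Delta}), eventually gives $\Delta(G)>n-1$, which is absurd. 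If you wish to salvage your approach, the key missing idea to import is the decomposition $G=G_i\cup H_i$ with the superadditivity $f\le f_i+g_i$: it is this step that converts the cross-colour disjointness you correctly identified as essential into something an induction can use, and the remaining (genuinely hard) work is the equality analysis, not a one-shot charging scheme.
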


Assuming Theorem \ref{gengraphsthm2}, we may easily obtain Theorem \ref{gengraphsthm1}.

\begin{proof}[Proof of Theorem \ref{gengraphsthm1}]
(a) Let $G$ be given a monochromatic $k$-connected colouring, using exactly $r$ colours. Then with $f$ as defined in Theorem \ref{gengraphsthm2}, we have
\[
f(u,v) \ge
\left\{
\begin{array}{l@{\quad\:}l}
k, & \textup{if $uv\not\in E(G)$,}\\[0.5ex]
k-1, & \textup{if $uv\in E(G)$,}
\end{array}
\right.
\]
which implies that $w(f)=\sum_{\{u,v\}}f(u,v)\ge k{n\choose 2}-e(G)$. From (\ref{gengraphsthmeq}), we have
\[
r\le e(G)- \bigg\lceil \frac{w(f)}{n-2}\bigg\rceil +1\le e(G)-\bigg\lceil\frac{k{n\choose 2}-e(G)}{n-2}\bigg\rceil+1,
\]
and (\ref{gengraphscoreq}) follows.\\[1ex]
\indent(b) We set $e(G)=\lceil\frac{kn}{2}\rceil$ in (\ref{gengraphscoreq}). If $kn$ is even, we have
\[
mc_k(G)\le\frac{kn}{2}-\bigg\lceil\frac{k{n\choose 2}-\frac{kn}{2}}{n-2}\bigg\rceil+1=1.
\]
Otherwise, we have $k$ and $n$ are both odd, so that $e(G)=\frac{kn+1}{2}$. Thus,
\begin{align*}
mc_k(G) &\le\bigg\lceil\frac{kn}{2}\bigg\rceil-\bigg\lceil\frac{k{n\choose 2}-\frac{kn+1}{2}}{n-2}\bigg\rceil+1=\bigg\lceil\frac{kn}{2}\bigg\rceil-\bigg\lceil\frac{kn+1}{2}-\frac{n-1}{2n-4}\bigg\rceil+1=1,
\end{align*}
since $n>k\ge 3$, so that $n\ge 5$, and $0<\frac{n-1}{2n-4}<1$.
\end{proof}

Now, we will prove Theorem \ref{gengraphsthm2}. We use induction on $r$. The following lemma considers the base case $r=1$, as well as some consequences when equality holds for this case.

\begin{lem}\label{gengraphslem}
Let $G$ be a graph on $n\ge 3$ vertices.
\begin{enumerate}
\item[(a)] Let $f: {V(G)\choose 2} \rightarrow \mathbb{Z}_{\ge 0}$. For $u,v\in V(G)$, suppose that there are $f(u,v)$ disjoint super-paths connecting $u$ and $v$. Then
\begin{equation}\label{gengraphslemeq1}
e(G)\ge \bigg\lceil \frac{w(f)}{n-2}\bigg\rceil.
\end{equation}
\item[(b)] Suppose that $e(G)\ge 1$, and equality holds in (\ref{gengraphslemeq1}). Then $\Delta(G)-\delta(G)\le 1$, and one of the following holds.
\begin{enumerate}
\item[(i)] $G$ is a regular graph, and $\Delta(G)=\delta(G)\ge 2$.
\item[(ii)] $G$ has exactly one vertex of degree $\delta(G)$, and $\Delta(G)\ge 3$, $\delta(G)\ge 2$.
\item[(iii)] $G$ has exactly one vertex of degree $\Delta(G)$, and $\Delta(G)\ge 4$, $\delta(G)\ge 3$.
\end{enumerate}
Moreover, we have
\begin{equation}
e(G)=\bigg \lceil \frac{w(f)}{n-2}\bigg\rceil=\bigg \lceil \frac{w(m_G)}{n-2}\bigg\rceil=\frac{w(m_G)+s}{n-2},\label{gengraphslemeq2}
\end{equation}
where $s=0$ if $G$ is regular, and $s=\frac{n-1}{2}$ otherwise.
\end{enumerate}
\end{lem}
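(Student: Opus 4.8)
The plan is to deduce part (a) from a pointwise comparison of $f$ with $m_G$, and to obtain part (b) from the equality case of the very same chain of inequalities. For part (a), I would first check that $f(u,v)\le m_G(u,v)$ for every pair $\{u,v\}$. A super-path joining $u$ and $v$ has length at least $2$, so the edge of the path at $u$ leads to an internal vertex distinct from $v$ and is in particular not the edge $uv$; since internally disjoint super-paths meet only at $u$ and $v$, the $f(u,v)$ super-paths use $f(u,v)$ distinct edges at $u$, none of them $uv$, and likewise at $v$. Hence $f(u,v)$ is at most $\min(\deg_G(u),\deg_G(v))$, reduced by one when $uv\in E(G)$, which is exactly $m_G(u,v)$; summing gives $w(f)\le w(m_G)$. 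Writing the degrees in nondecreasing order $d_1\le\cdots\le d_n$ and using $\min(d_i,d_j)=d_i$ for $i<j$, a short computation yields $w(m_G)=\sum_{i=1}^n d_i(n-i)-e(G)$, so that, with $2e(G)=\sum_i d_i$,
\[
(n-2)e(G)-w(m_G)=(n-1)e(G)-\sum_{i=1}^n d_i(n-i)=\sum_{i=1}^n d_i\Big(i-\tfrac{n+1}{2}\Big)=:S.
\]
Abel summation, using the partial sums $\sum_{j=i}^{n}(j-\tfrac{n+1}{2})=\tfrac12(i-1)(n-i+1)$, rewrites this as
\[
S=\sum_{i=2}^n (d_i-d_{i-1})\,\tfrac{(i-1)(n-i+1)}{2}.
\]
Every summand is nonnegative, so $S\ge 0$, i.e.\ $e(G)\ge \frac{w(m_G)}{n-2}\ge\frac{w(f)}{n-2}$; as $e(G)$ is an integer, (\ref{gengraphslemeq1}) follows.

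For part (b), equality in (\ref{gengraphslemeq1}) combined with the above chain forces $e(G)=\lceil \frac{w(m_G)}{n-2}\rceil$, and hence $0\le S<n-2$. I would then read the degree structure off the formula for $S$, examining the coefficients $c_i:=\tfrac{(i-1)(n-i+1)}{2}$: these satisfy $c_2=c_n=\tfrac{n-1}{2}$ and $c_i\ge n-2$ for $3\le i\le n-1$. Thus a jump $d_i>d_{i-1}$ at a position $3\le i\le n-1$ would already give $S\ge n-2$; two jumps, or one jump of size at least $2$ at a position in $\{2,n\}$, would give $S\ge n-1$; all of these contradict $S<n-2$ (for $n=3$ the single available position has $c_i=n-2$, forcing regularity). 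Therefore the degree sequence is either constant, giving $S=0$, or it has exactly one jump, of size exactly $1$, at $i=2$ or $i=n$, giving $S=\tfrac{n-1}{2}$. These three cases are respectively a regular graph, a graph with a unique vertex of degree $\delta(G)$ and all others of degree $\delta(G)+1$, and a graph with a unique vertex of degree $\Delta(G)$ and all others of degree $\Delta(G)-1$; in each $\Delta(G)-\delta(G)\le 1$, matching the statement.

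It remains to justify the degree lower bounds in (i)--(iii) and to identify $s$. Since $e(G)\ge 1$, equality forces $w(f)\ge 1$, so at least one super-path exists and $\Delta(G)\ge 2$; in the regular case this is (i). In the one-low case the handshake sum $\delta(G)+(n-1)(\delta(G)+1)$ must be even, which excludes $\delta(G)=1$ (that sum would be $2n-1$), giving $\delta(G)\ge 2$, $\Delta(G)\ge 3$, which is (ii). In the one-high case the sum $(n-1)\delta(G)+(\delta(G)+1)$ being even forces $\delta(G)$ odd; moreover $\delta(G)=1$ is impossible, since then $G$ is a single path of length $2$ together with a matching, whence $w(f)=1$ and equality would force $e(G)=1$, contradicting $e(G)=\tfrac{n+1}{2}\ge 2$. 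Thus $\delta(G)\ge 3$, $\Delta(G)\ge 4$, which is (iii). Finally $s=S=(n-2)e(G)-w(m_G)$ equals $0$ when $G$ is regular and $\tfrac{n-1}{2}$ otherwise, and rearranging gives $e(G)=\frac{w(m_G)+s}{n-2}$, which is (\ref{gengraphslemeq2}). I expect the equality analysis of (b) to be the main obstacle: the purely degree-sequence argument pins down the three shapes but not the exact degree thresholds, and these require the extra parity input together with the one explicit computation ruling out $\delta(G)=1$ in the one-high case.
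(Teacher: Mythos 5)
Your proposal is correct, and while its overall strategy parallels the paper's---bound $f$ pointwise by $m_G$, reduce everything to the sorted degree sequence $d_1\le\cdots\le d_n$, then finish cases (ii)/(iii) with the same parity argument plus the explicit exclusion of the degree sequence $(1,\dots,1,2)$ graph---your bookkeeping differs in two genuine ways. First, for part (a) the paper never invokes $m_G$: it observes $(n-2)\deg_G(v)\ge\sum_{u\ne v}f(u,v)$ for each vertex $v$ and sums; your route through $w(f)\le w(m_G)\le (n-2)e(G)$ proves more at this stage, and that extra strength is precisely what you reuse in (b). Second, where the paper rewrites the slack as a sum over symmetric pairs, $2S=\sum_{i\le\lfloor n/2\rfloor}(n-2i+1)(d_{n+1-i}-d_i)$, and then must compute $w(m_G)$ separately in each of cases (i)--(iii) to identify $s$, your Abel-summation identity $S=\sum_{i=2}^{n}(d_i-d_{i-1})\tfrac{(i-1)(n-i+1)}{2}$ simultaneously pins down the jump structure (jumps only at positions $2$ and $n$, at most one, of size one) and hands you $s\in\{0,\tfrac{n-1}{2}\}$ as the coefficient of the unique jump, making the ``moreover'' clause of (\ref{gengraphslemeq2}) essentially free; the paper instead performs three case-by-case computations of $w(m_G)$. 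One trivial slip: for $n=3$ there are two available positions ($i=2$ and $i=n=3$), not one, but both coefficients equal $n-2=\tfrac{n-1}{2}$, so your conclusion (regularity is forced) stands. Also note that your one-low case needs $\delta\ge 1$ before parity can upgrade to $\delta\ge2$; this follows from the $\Delta(G)\ge2$ observation you made earlier, so the argument is complete, just worth stating explicitly.
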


\begin{proof}
(a) For $v\in V(G)$, we have
\[
\deg_G(v) \ge
\left\{
\begin{array}{l@{\quad\:}l}
f(u,v), & \textup{if $uv\not\in E(G)$,}\\[0.5ex]
f(u,v)+1, & \textup{if $uv\in E(G)$.}
\end{array}
\right.
\]
Thus,
\begin{align*}
(n-1)\deg_G(v) &\ge \deg_G(v)+\sum_{u:u\neq v}f(u,v)\\
\deg_G(v) &\ge \frac{\sum_{u:u\neq v}f(u,v)}{n-2}.
\end{align*}
We have
\[
e(G)=\frac{1}{2}\sum_{v\in V(G)}\deg_G(v) \ge \frac{1}{2}\sum_{v\in V(G)} \frac{\sum_{u:u\neq v} f(u,v)}{n-2}= \frac{w(f)}{n-2},
\]
which implies (\ref{gengraphslemeq1}).\\[1ex]
\indent(b) Suppose that $e(G)\ge 1$, and equality holds in (\ref{gengraphslemeq1}). If $\Delta(G)=1$, then $G$ does not contain a super-path, so that $w(f)=0$, and equality in (\ref{gengraphslemeq1}) cannot hold. Hence, $\Delta(G)\ge 2$.

Now, let $V(G)=\{v_1,\dots,v_n\}$, and $d_1\le\cdots\le d_n$ be the degree sequence of $G$, where $d_i=\deg_G(v_i)$ for all $1\le i\le n$. Let $\delta=\delta(G)=d_1$ and $\Delta=\Delta(G)=d_n$. For every $1\le i\le n$, we have
\begin{align*}
\sum_{u:u\neq v_i}m_G(u,v_i) &=\sum_{j\neq i}\min(d_j,d_i)-d_i=  \sum_{j<i}d_j+\sum_{j>i}d_i  -d_i.\\
&=\sum_{j<i}d_j+(n-i-1)d_i.
\end{align*}
Since $f(u,v)\le m_G(u,v)$ for all $\{u,v\}\in{V(G)\choose 2}$,
\begin{align*}
2w(f) &=\sum_{i=1}^n\sum_{u:u\neq v_i}f(u,v_i)\le \sum_{i=1}^n\sum_{u:u\neq v_i}m_G(u,v_i)= \sum_{i=1}^n\bigg(\sum_{j<i}d_j+(n-i-1)d_i\bigg)\\
&=\sum_{i=1}^n((n-i)d_i+(n-i-1)d_i)=\sum_{i=1}^n((n-2)d_i+(n-2i+1)d_i)\\
&=2(n-2)e(G)-\sum_{i=1}^{\lfloor n/2\rfloor}(n-2i+1)(d_{n+1-i}-d_i).
\end{align*}
If $\Delta-\delta=d_n-d_1\ge 2$, then $2w(f)\le 2(n-2)e(G)-2(n-1)$, so $e(G)>\frac{w(f)}{n-2}+1$. Thus, we have $\Delta-\delta\le 1$. Then, if $n\ge 4$ and $d_n-d_1=d_{n-1}-d_2=1$, we have $2w(f)\le 2(n-2)e(G)-(n-1)-(n-3)$, so $e(G)\ge \frac{w(f)}{n-2}+1$. Thus, $d_2=\cdots=d_{n-1}$. If $\Delta=\delta\ge 2$, then $G$ is regular, and (i) holds. Now, let $\Delta-\delta=1$. Since $\sum_{i=1}^nd_i$ is even, if $d_2=\cdots=d_{n-1}=\Delta$, then $\Delta$ must be odd, so that $\Delta\ge 3$, $\delta\ge 2$, and (ii) holds. Otherwise, $d_2=\cdots=d_{n-1}=\delta$, and $\Delta$ must be even. If $\Delta=2$, then $G$ has exactly one super-path, so that $w(f)\le 1$. But $e(G)\ge 2$, so equality in (\ref{gengraphslemeq1}) cannot hold. Hence, $\Delta\ge 4$, $\delta\ge 3$, and (iii) holds.

Finally, note that $f(u,v)\le m_G(u,v)$ for all $\{u,v\}\in{V(G)\choose 2}$, so that $w(f)\le w(m_G)$. If $G$ satisfies (i), then $G$ is $\Delta$-regular, and $e(G)=\frac{\Delta n}{2}$. We have
\[
w(m_G)=\sum_{\{u,v\}}m_G(u,v)=\Delta{n\choose 2}-e(G)=(n-2)e(G),
\]
so that $e(G)=\big\lceil\frac{w(f)}{n-2}\big\rceil\le \big\lceil\frac{w(m_G)}{n-2}\big\rceil=\frac{w(m_G)}{n-2}=e(G)$, and (\ref{gengraphslemeq2}) holds with $s=0$.

If $G$ satisfies (ii), then $e(G)=\frac{\Delta n-1}{2}$. We have
\begin{align*}
w(m_G) &=\sum_{\{u,v\}}m_G(u,v)=\sum_{\{u,v\}}\min(\deg_G(u),\deg_G(v))-e(G)\\
&=(\Delta -1)(n-1)+\Delta{n-1\choose 2}-e(G)=(n-2)e(G)-\frac{n-1}{2}.
\end{align*}
If $G$ satisfies (iii), then $e(G)=\frac{(\Delta-1) n+1}{2}$. We have
\[
w(m_G) =\sum_{\{u,v\}}m_G(u,v)=(\Delta-1){n\choose 2}-e(G)=(n-2)e(G)-\frac{n-1}{2}.
\]
In both cases (ii) and (iii), we have $e(G)=\big\lceil\frac{w(f)}{n-2}\big\rceil\le\big\lceil\frac{w(m_G)}{n-2}\big\rceil=\frac{w(m_G)+s}{n-2}=e(G)$ where $s=\frac{n-1}{2}$, and (\ref{gengraphslemeq2}) holds.
\end{proof}

\begin{proof}[Proof of Theorem \ref{gengraphsthm2}]
It suffices to prove the case of the theorem where in the colouring of $G$, every colour induces exactly one non-trivial component. Indeed, suppose that we have a colouring with exactly $r'$ colours. Applying the recolouring procedure as described after Observation \ref{UBobs}, we obtain a new colouring of $G$ using exactly $r\ge r'$ colours, where every colour induces exactly one non-trivial component. We may then apply the theorem to this new colouring with the same function $f$, to obtain $e(G)\ge\big\lceil\frac{w(f)}{n-2}\big\rceil+r-1\ge \big\lceil\frac{w(f)}{n-2}\big\rceil+r'-1$.

We prove this case of Theorem \ref{gengraphsthm2} by induction on $r$. Lemma \ref{gengraphslem} asserts the base case $r=1$. Now let $r\ge 2$. Suppose that a graph $G$  on $n\ge 3$ vertices is given a colouring with exactly $r$ colours, where the set of colours is $[r]$, and every colour induces exactly one non-trivial component. Assume that Theorem \ref{gengraphsthm2} holds when $G$ is given such a colouring with fewer than $r$ colours. Let $f: {V(G)\choose 2} \rightarrow \mathbb{Z}_{\ge 0}$ be such that for all $u,v\in V(G)$, there are $f(u,v)$ disjoint monochromatic super-paths connecting $u$ and $v$. For $1\le i\le r$, recall that $G_i$ is the spanning subgraph of $G$ induced by colour $i$. Let $H_i=G-E(G_i)$. For simplicity, we denote $m_{G_i}$ by $m_i$. We define functions $f_i,g_i:{V(G)\choose 2}\to\mathbb Z_{\ge 0}$ where
\begin{align*}
f_i(u,v) = &\textup{ Maximum number of disjoint super-paths connecting }u\text{ and }v\text{ in }G_i,\\
g_i(u,v) =&\textup{ Maximum number of disjoint monochromatic super-paths connecting}\\
& \textup{ $u$ and }v\text{ in }H_i.
\end{align*}
For any $1\le i\le r$, by Lemma \ref{gengraphslem}(a), we have
\[
e(G_i)\ge \bigg\lceil \frac{w(f_i)}{n-2}\bigg\rceil.
\]
Since $H_i$ uses exactly $r-1$ colours, by the induction hypothesis, we have
\[
e(H_i)\ge \bigg\lceil \frac{w(g_i)}{n-2}\bigg\rceil +r-2.
\]
Since $f(u,v)\le f_i(u,v)+g_i(u,v)$ for every $\{u,v\}\in {V(G)\choose 2}$, we have
\begin{equation}\label{eq:G}
e(G)=e(G_i)+e(H_i)\ge \left\lceil \frac{w(f_i)}{n-2}\right\rceil + \bigg\lceil \frac{w(g_i)}{n-2}\bigg\rceil +r-2 \ge\left\lceil \frac{w(f)}{n-2}\right\rceil + r-2.
\end{equation}

To obtain a contradiction, suppose that $e(G)= \big \lceil \frac{w(f)}{n-2}\big\rceil +r-2$. Then for every $1\le i\le r$, the equalities of \eqref{eq:G} hold, and $e(G_i) = \big\lceil \frac{w(f_i)}{n-2}\big\rceil$. By Lemma \ref{gengraphslem}(b), we have
$G_i$ satisfies one of (i), (ii) or (iii), and
\begin{equation}\label{eq:Gi2}
e(G_i)= \bigg\lceil \frac{w(f_i)}{n-2} \bigg\rceil = \bigg\lceil \frac{w(m_i)}{n-2} \bigg\rceil=\frac{w(m_i)+s_i}{n-2},
\end{equation}
where $s_i=0$ if $G_i$ is regular, and $s_i=\frac{n-1}{2}$ otherwise.

We first prove three claims. Let $B_1,B_2,B_3$ denote, respectively, the set of colours $i$ such that $G_i$ satisfies (i), (ii) and (iii). Let $b_2=|B_2|$, $b_3=|B_3|$, and $b=b_2+b_3$. Then $s_1+\cdots+s_r = \frac{b(n-1)}{2}$. Let $w(f)+t=w(m_1)+\cdots +w(m_r)$.

\begin{clm}\label{claim:t}
$\frac{b(n-1)}{2}+t<(r-1)(n-2)$.
\end{clm}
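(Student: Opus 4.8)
The plan is to convert the per-colour equality \eqref{eq:Gi2} into a single global identity by summing over all $r$ colours, and then to play this identity against the contradiction hypothesis $e(G)=\lceil\frac{w(f)}{n-2}\rceil+r-2$ using the strict lower bound that the ceiling provides.

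First I would sum \eqref{eq:Gi2} over $i=1,\dots,r$. On the left, since every edge of $G$ receives exactly one colour and $G_i$ consists of all edges of colour $i$ together with all vertices, the edge sets $E(G_1),\dots,E(G_r)$ partition $E(G)$, so $\sum_{i=1}^r e(G_i)=e(G)$. On the right, the definitions $w(f)+t=w(m_1)+\cdots+w(m_r)$ and $s_1+\cdots+s_r=\frac{b(n-1)}{2}$ make the sum $\sum_{i=1}^r\frac{w(m_i)+s_i}{n-2}$ collapse to $\frac{w(f)+t+\frac{b(n-1)}{2}}{n-2}$. This yields the clean identity
\[
(n-2)e(G)=w(f)+t+\frac{b(n-1)}{2}.
\]

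Next I would substitute the contradiction hypothesis. Writing $q=\lceil\frac{w(f)}{n-2}\rceil$, the assumption reads $e(G)=q+r-2$, so $(n-2)e(G)=(n-2)q+(n-2)(r-2)$. The decisive input is the strict ceiling bound $q-1<\frac{w(f)}{n-2}$, equivalently $(n-2)q<w(f)+(n-2)$; this is strict because $n\ge 3$ gives $n-2\ge 1>0$, and it holds even when $w(f)$ is a multiple of $n-2$. Substituting gives
\[
(n-2)e(G)=(n-2)q+(n-2)(r-2)<w(f)+(n-2)+(n-2)(r-2)=w(f)+(r-1)(n-2).
\]
Comparing this with the identity above and cancelling the common term $w(f)$ yields $t+\frac{b(n-1)}{2}<(r-1)(n-2)$, which is exactly the claim.

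The argument is essentially bookkeeping, so I do not expect a serious obstacle; the one point that must be handled carefully is that the inequality has to be \emph{strict}, and this strictness is supplied precisely by the ceiling through $q-1<\frac{w(f)}{n-2}$. I would therefore make sure to keep track of this single strict inequality as it passes through the substitution, since it is exactly what distinguishes the claimed bound from a weaker non-strict one (and note that the sign of $t$ is irrelevant, as the claim is only an upper bound on $t+\frac{b(n-1)}{2}$).
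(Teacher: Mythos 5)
Your proof is correct and follows essentially the same route as the paper: both sum \eqref{eq:Gi2} over all colours using $e(G)=\sum_{i=1}^r e(G_i)$, invoke the contradiction hypothesis, and extract strictness from $\lceil x\rceil < x+1$ before cancelling the common $w(f)$ (equivalently $\sum_i w(m_i)$) term. The only difference is cosmetic — you clear the denominator $n-2$ first, while the paper keeps the fractions — so the two arguments are the same computation.
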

\begin{proof}
We have
\begin{align*}
\sum_{i=1}^{r}\frac{w(m_i)+s_i}{n-2} -(r-2) &= e(G) -(r-2)=\bigg\lceil \frac{w(f)}{n-2}\bigg\rceil=\bigg\lceil \frac{\sum_{i=1}^{r}w(m_i)-t}{n-2}\bigg\rceil\\
&<\frac{\sum_{i=1}^{r}w(m_i)-t}{n-2}+1,
\end{align*}
where the first equality follows from (\ref{eq:Gi2}) and $e(G)=\sum_{i=1}^re(G_i)$. Thus $\frac{b(n-1)}{2(n-2)} -(r-2)<-\frac{t}{n-2}+1$, which rearranges to $\frac{b(n-1)}{2}+t<(r-1)(n-2)$.
\end{proof}

Now, note that for all $\{u,v\}\in{V(G)\choose 2}$, we have $f_i(u,v)\le m_i(u,v)$ for all $1\le i\le r$. Thus,
\[
f(u,v)\le \sum_{i=1}^r f_i(u,v)\le \sum_{i=1}^r m_i(u,v).
%\label{eq7}
\]
We define a graph $F$ so that $V(F)=V(G)$, and $uv\in E(F)$ if and only if $f(u,v)< \sum_{i=1}^r m_i(u,v)$.
Then, we have
\begin{align*}
w(f)+t &=\sum_{i=1}^r w(m_i)=\sum_{uv\in E(F)}\sum_{i=1}^rm_i(u,v)+\sum_{uv\not\in E(F)}\sum_{i=1}^rm_i(u,v)\\
&\ge \bigg(e(F)+ \sum_{uv\in E(F)}f(u,v)\bigg)+\sum_{uv\not\in E(F)}f(u,v)=e(F)+w(f),
\end{align*}
so that $e(F)\le t$.

\begin{clm}\label{claim:F}
Let $v\in V(G)$ and $x,y\in N_G(v)$. Let the colours of $vx$ and $vy$ be $i$ and $j$ respectively, where $i\neq j$. If $\deg_{G_i}(x)\le \deg_{G_i}(y)$ and $\deg_{G_j}(x)\ge \deg_{G_j}(y)$, then $xy\in E(F)$.
\end{clm}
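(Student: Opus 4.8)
The goal is to show that the pair $\{x,y\}$ lies in $E(F)$, which by the definition of $F$ amounts to proving the strict inequality $f(x,y)<\sum_{l=1}^r m_l(x,y)$. The plan is to fix a collection $P_1,\dots,P_{f(x,y)}$ of internally disjoint monochromatic super-paths joining $x$ and $y$, partition it according to the colour of each path so that $f(x,y)$ equals the sum over $l$ of the number of colour-$l$ paths, and then exploit the two distinguished edges $vx$ (of colour $i$) and $vy$ (of colour $j$) to force the count in at least one colour class strictly below its natural maximum $m_l(x,y)$.

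First I would record the elementary counting bound underlying everything. Any family of internally disjoint super-paths from $x$ to $y$ inside $G_l$ uses distinct first edges at $x$ and distinct last edges at $y$, and none of these edges is $xy$, since a super-path has length at least two. Hence the number of colour-$l$ paths in our collection is at most the number of colour-$l$ edges at the lower-degree endpoint other than $xy$, which is exactly $m_l(x,y)$. The two hypotheses enter precisely here, to select in each special colour the endpoint realizing the minimum degree: the assumption $\deg_{G_i}(x)\le\deg_{G_i}(y)$ makes $x$ the controlling endpoint for colour $i$, so $m_i(x,y)$ is governed by the colour-$i$ edges at $x$; symmetrically, $\deg_{G_j}(x)\ge\deg_{G_j}(y)$ makes $y$ the controlling endpoint for colour $j$.

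The heart of the argument is the observation that, because the $P_t$ are internally disjoint and $v\notin\{x,y\}$, the vertex $v$ is internal to at most one of them. I would then split into cases according to whether some path passes through $v$ and, if so, its colour $c$. A monochromatic path through $v$ can use at most one of the two edges $vx,vy$ at $v$: it uses $vx$ only if $c=i$, and $vy$ only if $c=j$. Consequently at least one of $vx,vy$ is used by no path of the collection. Whenever $vx$ is unused, the colour-$i$ paths must draw their first edges at $x$ from the colour-$i$ edges at $x$ other than $xy$ and $vx$, so there are at most $m_i(x,y)-1$ of them; symmetrically an unused $vy$ forces at most $m_j(x,y)-1$ colour-$j$ paths. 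Summing the per-colour bounds (each remaining colour contributing at most $m_l(x,y)$) then gives $f(x,y)\le\sum_{l}m_l(x,y)-1$, which is the desired strict inequality. In the cases where $v$ lies on no path, or on a path of a third colour $c\neq i,j$, both $vx$ and $vy$ are unused, and one even saves two.

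I expect the main obstacle to be the careful bookkeeping across the case analysis: one must verify in each case exactly which of $vx,vy$ is freed, remain consistent about the extra reduction by one when $xy$ is itself an edge of the colour in question, and confirm that each degree hypothesis is invoked for the correct endpoint of colours $i$ and $j$. The conceptual key is the alignment between the minimum-degree endpoint and the guaranteed-unused incident edge, namely $x$ together with $vx$ in colour $i$, and $y$ together with $vy$ in colour $j$; once this alignment is set up, the remaining inequalities are immediate and no use of the equality hypotheses on the $G_l$ is needed.
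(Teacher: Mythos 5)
Your proof is correct and takes essentially the same approach as the paper's: both rest on the observations that the degree hypotheses make $m_i(x,y)$ count the colour-$i$ edges at $x$ and $m_j(x,y)$ the colour-$j$ edges at $y$ (excluding $xy$), and that internally disjoint monochromatic super-paths cannot use both $vx$ and $vy$, since any path through $v$ uses at most one of them. The only difference is presentational: the paper argues by contradiction (the equality $f(x,y)=\sum_{\ell=1}^{r}m_\ell(x,y)$ would force both $vx$ and $vy$ to be used by disjoint super-paths through $v$), whereas you run the same count directly to obtain the deficit $f(x,y)\le\sum_{\ell=1}^{r}m_\ell(x,y)-1$.
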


\begin{proof}
Suppose that $xy\not\in E(F)$. Then $f(x,y)=\sum_{\ell=1}^rm_\ell(x,y)$. This means that there are $m_i(x,y)$ (resp.~$m_j(x,y)$) super-paths of colour $i$ (resp.~colour $j$) connecting $x$ and $y$, with all $m_i(x,y)+m_j(x,y)$ super-paths disjoint. Since $\deg_{G_i}(x)\le \deg_{G_i}(y)$ and $\deg_{G_j}(x)\ge \deg_{G_j}(y)$, this means that all edges incident to $x$  of colour $i$ and all edges incident to $y$ of colour $j$, except for $xy$ if $xy\in E(G)$, are used in the $m_i(x,y)+m_j(x,y)$ super-paths. Hence, $vx$ and $vy$ are used in the $m_i(x,y)+m_j(x,y)$ super-paths, which is a contradiction since the super-paths are disjoint.
\end{proof}

\begin{clm}\label{clm:Delta}
$\Delta(G_i)\ge 3$ for all $1\le i\le r$.
\end{clm}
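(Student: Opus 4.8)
The plan is to reduce to a single bad configuration and then obtain a contradiction by a super-path occupancy argument. First I would observe that, under the standing contradiction hypothesis, every $G_i$ satisfies one of (i), (ii), (iii) of Lemma \ref{gengraphslem}(b), so in particular $\delta(G_i)\ge 2$; since each colour induces exactly one non-trivial component, $G_i$ has no isolated vertex, hence is connected and spanning on $V(G)$. In cases (ii) and (iii) we have $\Delta(G_i)\ge 3$ immediately, and in case (i) either $\Delta(G_i)=\delta(G_i)\ge 3$ (done) or $\Delta(G_i)=\delta(G_i)=2$. In the latter case $G_i$ is a connected, spanning, $2$-regular graph, i.e.\ a Hamilton cycle $C_n$. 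Thus it suffices to derive a contradiction from the assumption that some $G_i=C_n$.

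Next I would record two easy facts. Since each $G_\ell$ is spanning with $\delta(G_\ell)\ge 2$, we have $e(G_\ell)\ge n$, whence $rn\le e(G)\le{n\choose 2}$ and so $r\le\frac{n-1}{2}$; in particular $n\ge 2r+1\ge 5$. Moreover every vertex has at least two incident edges of each colour, so for any pair $\{u,v\}$ and any colour $j$ we have $m_j(u,v)\ge 1$.

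The heart of the argument is the claim that \emph{every} pair $\{u,v\}$ whose vertices are non-adjacent on the cycle lies in $E(F)$. Suppose not, so that $f(u,v)=\sum_{\ell}m_\ell(u,v)$. Since $f(u,v)\le\sum_\ell f_\ell(u,v)\le\sum_\ell m_\ell(u,v)$, a maximum disjoint family of monochromatic super-paths joining $u$ and $v$ must use exactly $m_\ell(u,v)$ paths of each colour $\ell$; in particular it uses $m_i(u,v)=2$ colour-$i$ super-paths. The only super-paths joining $u$ and $v$ in $C_n$ are the two arcs (both are super-paths, as $u,v$ are non-adjacent), and two internally disjoint ones are forced to be exactly these two arcs, which together occupy all $n-2$ internal cycle vertices. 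But the family also contains at least one super-path of some colour $j\neq i$ (as $m_j(u,v)\ge 1$ and $r\ge 2$), and this super-path needs an internal vertex, a contradiction. Hence all ${n\choose 2}-n$ non-adjacent pairs lie in $F$, so $e(F)\ge{n\choose 2}-n=\frac{n(n-3)}{2}$.

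Finally I would combine this with the bounds already in hand. From $e(F)\le t$ and Claim \ref{claim:t} we get $e(F)\le t<(r-1)(n-2)$, and $r\le\frac{n-1}{2}$ yields $e(F)<\frac{(n-3)(n-2)}{2}$; this contradicts $e(F)\ge\frac{n(n-3)}{2}$, since $\frac{n(n-3)}{2}-\frac{(n-2)(n-3)}{2}=n-3\ge 2>0$ for $n\ge 5$. The contradiction shows that no colour class is a cycle, so $\Delta(G_i)\ge 3$ for all $1\le i\le r$. The main obstacle—indeed the only step needing real care—is the occupancy claim: that the two colour-$i$ arcs saturate every internal vertex and so leave no room for even one super-path of another colour; the remainder is bookkeeping and elementary estimates. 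An alternative route to the same forced $F$-edges would apply Claim \ref{claim:F} to the cycle edges meeting at a common vertex (using $\deg_{G_i}\equiv 2$), but the direct occupancy argument appears shorter.
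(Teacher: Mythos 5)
Your proof is correct and follows essentially the same route as the paper's: reduce to the case where some colour class $G_i$ is a Hamilton cycle, force pairs of vertices into $E(F)$ via the occupancy argument (the colour-$i$ arcs between $u$ and $v$ use up all internal vertices, leaving no room for the at least one guaranteed super-path of another colour), and then contradict $e(F)\le t<(r-1)(n-2)$ from Claim \ref{claim:t} using $r\le\frac{n-1}{2}$. The only cosmetic difference is that you restrict to cycle-non-adjacent pairs, giving $e(F)\ge\binom{n}{2}-n$ rather than the paper's $e(F)=\binom{n}{2}$; both bounds yield the same contradiction.
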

\begin{proof}
Clearly from Lemma \ref{gengraphslem}(b), it suffices to show that for all $j\in B_1$, we do not have $\Delta(G_j)=2$. Assume the contrary for some $j\in B_1$. Since $G_j$ has exactly one non-trivial component, we have $G_j$ is a cycle of length $n$. For any two vertices $u,v\in V(G_j)$, there are two disjoint paths in $G_j$ connecting $u$ and $v$, and these two paths together always use all vertices of $G_j$. Thus, $uv\in E(F)$. On the other hand, since $\delta(G_i)\ge 2$ for all $1\le i\le r$, we have $e(G_i)\ge n$. Thus $rn\le {n\choose 2}$, and so $r\le \frac{n-1}{2}$. By Claim \ref{claim:t}, we have ${n\choose 2}=e(F)\le t< (r-1)(n-2) \le \frac{(n-2)(n-3)}{2}$, a contradiction.
\end{proof}

We now complete the proof of the theorem. By Lemma \ref{gengraphslem}(b), for $i\in B_2$ (resp.~$i\in B_3$), let $z_i$ be the unique vertex with minimum (resp.~maximum) degree in $G_i$. We define the set $V'$ where
\[
V'=
\left\{
\begin{array}{l@{\quad\:}l}
V(G)\setminus\{z\}, & \textup{if all $z_i$ ($i\in B_2\cup B_3$) coincide at the same vertex $z\in V(G)$,}\\[0.5ex]
V(G), & \textup{otherwise.}
\end{array}
\right.
\]
We estimate $2e(F)$ as follows. For $x\in V'$, we may choose a colour $i$ such that either $i\in B_1$, or $i\in B_2\cup B_3$ and $x\neq z_i$. Let $v\in N_{G_i}(x)$, and
\[
Y_x=
\left\{
\begin{array}{l@{\quad\:}l}
N_G(v)\setminus(N_{G_i}(v)\cup \{z_i\}), & \textup{if $i\in B_2$,}\\[0.5ex]
N_G(v)\setminus N_{G_i}(v), & \textup{if $i\in B_1\cup B_3$.}
\end{array}
\right.
\]
By Lemma \ref{gengraphslem}(b) and Claim \ref{clm:Delta}, we have $\delta(G_\ell)\ge 3$ for $\ell\in B_1\cup B_3$, and $\delta(G_\ell)\ge 2$ for $\ell\in B_2$. Thus,
\begin{equation}\label{YxLBeq}
|Y_x| \ge \left\{
\begin{array}{l@{\quad\:}l}
3(r-b_2)+2(b_2-1)-1=3r-3-b_2,& \textup{if $i\in B_2$,}\\[0.5ex]
3(r-1-b_2)+2b_2=3r-3-b_2, & \textup{if $i\in B_1\cup B_3$.}
\end{array}
\right.
\end{equation}
Now, let $\Delta=\Delta(G)$, $\Delta_\ell=\Delta(G_\ell)$, and $\delta_\ell=\delta(G_\ell)$, for $1\le \ell\le r$. Let $y\in Y_x$. We have $\deg_{G_i}(x)\le \deg_{G_i}(y)$. Also, $y\in N_{G_j}(v)$ for some $j\neq i$. We have $\deg_{G_j}(x)\ge \deg_{G_j}(y)$, unless if $j\in B_2^x$, where $B_2^x=\{\ell\in B_2: x=z_\ell\}$; or if $j\in B_3$ and $y=z_j$. There are at most $\sum_{\ell\in B_2^x}\Delta_\ell+b_3$ of these exceptional vertices for $y$ in $Y_x$. By Claim \ref{claim:F} and (\ref{YxLBeq}), we have
\[
\deg_F(x)\ge |Y_x|-\sum_{\ell\in B_2^x}\Delta_\ell-b_3\ge 3r-3-b-\sum_{\ell\in B_2^x}\Delta_\ell.
\]
Note that the sets $B_2^x$ are disjoint subsets of $B_2$ as $x$ varies over $V'$, and so $\sum_{x\in V'}\sum_{\ell\in B_2^x} \Delta_\ell \le \sum_{\ell=1}^r\Delta_\ell=\sum_{\ell=1}^r\delta_\ell+b\le\Delta+b$. Thus
%\begin{align*}
\[
2e(F) \ge\sum_{x\in V'}\deg_F(x)\ge\sum_{x\in V'}\bigg(3r-3-b-\sum_{\ell\in B_2^x}\Delta_\ell\bigg)\ge (3r-3-b)(n-1)-\Delta-b,
%2e(F) &\ge\sum_{x\in V'}\deg_F(x)\ge\sum_{x\in V'}\bigg(3r-3-b-\sum_{j\in B_2^x}\Delta_j\bigg)\\
%&\ge (3r-3-b)|V'|-\sum_{j\in B_2}\Delta_j\ge (3r-3-b)(n-1)-\Delta,
\]
%\end{align*}
since $3r-3-b\ge 2r-3\ge 1$, so the final inequality holds.

But now, using $e(F)\le t$ and Claim \ref{claim:t}, we have
\[
2(r-1)(n-2)-b(n-1)>2t\ge 2e(F)\ge(3r-3-b)(n-1)-\Delta-b,
\]
which implies that $\Delta>(r-1)n+r-1-b\ge n-1$, a contradiction. This completes the induction step of the proof.

The proof of the special case of Theorem \ref{gengraphsthm2} is complete. Theorem \ref{gengraphsthm2} itself follows.
\end{proof}

\section{Monochromatic $k$-connection number for small $k$}\label{smallksect}

Our next aim is to consider the monochromatic $k$-connection number for a class of graphs when $k\ge 2$ is small. Firstly, we consider $k$-connected graphs on $n$ vertices where a minimum spanning $k$-connected subgraph has the fewest possible number of edges, which is $\lceil\frac{kn}{2}\rceil$ by Theorem \ref{Harthm}. In the following result, we compute $mc_k(G)$ exactly for such graphs $G$, where $k\in\{2,3,4,5\}$. In other words, we have a solution to Conjecture \ref{mckconj1} for these values of $k$.

\begin{thm}\label{smallkthm}
Let $n>k$, where $k\in\{2,3,4,5\}$. Let $G$ be a $k$-connected graph on $n$ vertices, and $H\subset G$ be a minimum spanning $k$-connected subgraph. If $e(H)=\lceil\frac{kn}{2}\rceil$, then
\[
mc_k(G)=e(G)-\bigg\lceil\frac{kn}{2}\bigg\rceil+1.
\]
\end{thm}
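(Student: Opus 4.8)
The plan is to prove the two inequalities separately, with the lower bound being routine and the upper bound being the real content. For the lower bound, observe first that since $e(H)=\lceil\frac{kn}{2}\rceil$ is the global minimum for a $k$-connected graph on $n$ vertices by Theorem~\ref{Harthm}, \emph{every} minimum spanning $k$-connected subgraph $H'\subset G$ has exactly $\lceil\frac{kn}{2}\rceil$ edges. Thus Theorem~\ref{gengraphsthm1}(b) applies to each such $H'$ and gives $mc_k(H')=1$, whence $h_k(G)=1$. Plugging this into Observation~\ref{LBobs} yields $mc_k(G)\ge e(G)-\lceil\frac{kn}{2}\rceil+1$, as required. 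It remains to establish the matching upper bound.

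For the upper bound I fix a monochromatic $k$-connected colouring of $G$ with $r$ colours and, using Observation~\ref{UBobs}, assume each colour $i$ induces a single non-trivial component $C_i=G_i$. The target is the estimate $e(G)\ge\lceil\frac{kn}{2}\rceil+r-1$, which rearranges to $r\le e(G)-\lceil\frac{kn}{2}\rceil+1$. The base case $r=1$ is exactly Theorem~\ref{Harthm}. The obstruction to using the machinery already developed is that Theorem~\ref{gengraphsthm2}, applied with the natural function $f$ (with $f(u,v)\ge k$ for non-edges and $f(u,v)\ge k-1$ for edges, as in the proof of Theorem~\ref{gengraphsthm1}(a)), only delivers $r\le e(G)-\lceil\frac{k{n\choose 2}-e(G)}{n-2}\rceil+1$; this coincides with the desired bound precisely when $e(G)=\lceil\frac{kn}{2}\rceil$ but is strictly weaker once $e(G)$ exceeds the minimum. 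Hence a sharper argument exploiting the structure at the vertices is needed.

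The clean input I would isolate first is the case in which some vertex $v$ has colour degree $\deg_G^c(v)=1$: then the $k$ internally disjoint monochromatic paths from $v$ to any other vertex all begin with edges of the single colour $i$ at $v$, hence are entirely of colour $i$. In particular, for every $w\neq v$ the $k$ disjoint paths to $w$ use $k$ distinct edges at $w$, so every vertex has degree at least $k$ in $G_i$, giving $e(G_i)\ge\lceil\frac{kn}{2}\rceil$ and therefore $e(G)-r\ge e(G_i)-1\ge\lceil\frac{kn}{2}\rceil-1$, which is exactly what we want. The substantial case is the complementary one where $\deg_G^c(v)\ge 2$ for all $v$. Here I would set up the identity $e(G)-r=\sum_i\bigl(|V(C_i)|-2\bigr)+\sum_i\beta_i$, where $\beta_i\ge 0$ is the cyclomatic number of $C_i$, together with $\sum_i|V(C_i)|=\sum_v\deg_G^c(v)$, and then bound the deficit by combining the near-regular classification of each $C_i$ from Lemma~\ref{gengraphslem}(b) with the local degree bookkeeping used in Claims~\ref{claim:F} and \ref{clm:Delta} (the auxiliary graph $F$ recording pairs for which $f(u,v)<\sum_i m_i(u,v)$, and the bound $e(F)\le t$). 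The restriction $k\in\{2,3,4,5\}$ is expected to enter through a finite case analysis on how the few colours incident to each low-colour-degree vertex distribute the $k$ disjoint paths.

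The step I expect to be the main obstacle is exactly this complementary regime. Small colour classes — single edges, or short paths and cycles — contribute little to the monochromatic super-path budget while still consuming a colour, so the heart of the matter is to show that such classes cannot be numerous without forcing a corresponding number of redundant edges beyond the count $\lceil\frac{kn}{2}\rceil$. Controlling the interplay between many small colour classes and the $k$-connectivity requirement, and verifying that the resulting inequalities close, is the crux; one anticipates that the analogous bound fails for larger $k$, consistent with the theorem being stated only for $k\in\{2,3,4,5\}$.
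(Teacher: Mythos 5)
Your lower bound is fine, and your first special case (some vertex with colour degree $1$) is correct and cleanly argued: all $k$ disjoint monochromatic paths out of such a vertex carry its unique colour $i$, forcing $\delta(G_i)\ge k$, hence $e(G)\ge\lceil\frac{kn}{2}\rceil+(r-1)$. You also correctly diagnose why Theorem \ref{gengraphsthm1}(a) alone is too weak once $e(G)>\lceil\frac{kn}{2}\rceil$. But the complementary regime --- every vertex has colour degree at least $2$ --- is the entire substance of the theorem, and there your proposal has a genuine gap. The machinery you plan to reuse does not apply: Lemma \ref{gengraphslem}(b), Claim \ref{claim:F} and Claim \ref{clm:Delta} are all consequences of the \emph{equality} hypothesis $e(G_i)=\big\lceil\frac{w(f_i)}{n-2}\big\rceil$ for every colour $i$, which in the proof of Theorem \ref{gengraphsthm2} is forced by the specific contradiction assumption $e(G)=\big\lceil\frac{w(f)}{n-2}\big\rceil+r-2$. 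Here the assumption to contradict is $e(G)\le\lceil\frac{kn}{2}\rceil+r-2$, which is a different (and for $e(G)$ above the minimum, weaker per-colour) constraint; it does not force equality in Lemma \ref{gengraphslem}(a) for each $G_i$, so the near-regularity classification and the auxiliary graph $F$ with $e(F)\le t$ are simply unavailable. Moreover, your deficit identity $e(G)-r=\sum_i(|V(C_i)|-2)+\sum_i\beta_i$ does not close on its own: with all colour degrees $\ge 2$ it only yields $e(G)-r\ge 2n-2r+\sum_i\beta_i$, and colour classes that are trees contribute $\beta_i=0$, so for $k=4,5$ you are far short of $\lceil\frac{kn}{2}\rceil-1$ without a substantially finer analysis of how $k$-connectivity forces either high colour degrees or many independent cycles inside colour classes.

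For comparison, the paper proves exactly the inequality you target, but by a different route: it first deletes the single-edge colour components to form a subgraph $F$ using $r'\le r$ colours (each deletion costs one edge and one colour, so it suffices to prove $2e(F)\ge kn+2(r'-1)$), and then splits not on colour degree $1$ versus $\ge 2$, but on whether \emph{every} vertex of $F$ is incident to \emph{every} colour. In the affirmative case each colour class is connected and spanning, so $e(G_i)\ge n-1$, which settles all but the pairs $(k,r')=(4,2),(5,2)$; those are killed by a multiset-of-colours argument (observations (O5)--(O7)) showing some colour would have to induce a Hamilton cycle, contradicting $k$-connectivity. In the remaining case it takes a vertex $v$ of minimum colour degree $p<r'$ with colour set $A$, studies $G_A$ and the set $U$ of vertices of $G_A$-degree $k-1$, and proves (Claim \ref{sc2.2clm}) that the components of $G_{\bar A}$ meeting $U$ force $e(G_{\bar A})\ge|U|+k-2$; the restriction $k\le 5$ enters in these specific subcases. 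If you want to complete your write-up, the missing work is essentially all of this: you need an argument, independent of Theorem \ref{gengraphsthm2}'s equality machinery, that handles colourings in which every colour class is large and every vertex has several colours.
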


We note that the case $k=2$ follows from Theorem \ref{LLthm}(a). Indeed, if $G$ is $2$-connected, and $H\subset G$ is a minimum spanning $2$-connected subgraph with $e(H)=n$, then $H$ is a Hamilton cycle. Setting $k=2$ in (\ref{LLconjeq}), we have $mc_2(G)\le emc_2(G)=e(G)-n+1$. The matching lower bound follows from Observation \ref{LBobs}. Our proof of Theorem \ref{smallkthm} will include the case $k=2$.

We also consider a version of Theorem \ref{smallkthm} for bipartite graphs. Note that if $G$ is a connected bipartite graph on $n\ge 2$ vertices, then by Theorem \ref{CYthm}, we have $mc(G)=e(G)-n+2$. Thus, we assume that $k\ge 2$ throughout. Let $t\ge s\ge k$, and let $G$ be a $k$-connected bipartite graph with classes $X$ and $Y$, where $|X|=s$ and $|Y|=t$. Note that the condition $t\ge s\ge k$ is necessary for $G$ to be $k$-connected. Now, if $H\subset G$ is a minimum spanning $k$-connected subgraph, then since every vertex of $H$ in $Y$ has degree at least $k$, we have $e(H)\ge kt$. Similar to Theorem \ref{Harthm}, we can show that for every $t\ge s\ge k$, there exists a $k$-connected bipartite graph $H_{s,t,k}$ with class sizes $s$ and $t$, and $e(H_{s,t,k})=kt$. We have the following lemma of Plummer and Saito \cite{PS05}.

\begin{lem}\label{PSlem}\textup{\cite{PS05}}
Let $s\ge k\ge 2$. Then, there exists a $k$-regular, $k$-connected bipartite graph, where both partition classes have $s$ vertices.
\end{lem}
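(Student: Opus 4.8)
The plan is to construct, for each $s \ge k \ge 2$, a $k$-regular bipartite graph on partition classes $A = \{a_0, a_1, \dots, a_{s-1}\}$ and $B = \{b_0, b_1, \dots, b_{s-1}\}$ that is also $k$-connected, and the natural candidate is the \emph{circulant-type} bipartite graph where $a_i$ is joined to $b_{i}, b_{i+1}, \dots, b_{i+k-1}$, with indices read modulo $s$. This graph, call it $G$, is clearly $k$-regular: each $a_i$ has exactly the $k$ neighbours listed, and each $b_j$ is joined to exactly those $a_i$ with $i \in \{j-k+1, \dots, j\} \pmod s$, again $k$ of them. So the only real content is to verify $k$-connectivity. (One must also check $|V(G)| = 2s \ge k+1$, which is immediate since $s \ge k \ge 2$ gives $2s \ge 2k \ge k+2$.)

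My approach to the connectivity is to apply Menger's theorem in the form quoted in the excerpt: it suffices to exhibit, for every pair of distinct vertices $u, v$, a set of $k$ internally disjoint $u$--$v$ paths. I would split into the cases $\{u,v\} \subseteq A$, $\{u,v\} \subseteq B$, and $u \in A$, $v \in B$. The cyclic symmetry $a_i \mapsto a_{i+1}$, $b_j \mapsto b_{j+1}$ acts transitively on $A$ and on $B$, so without loss of generality I can fix $u = a_0$ in the first and third cases and $u = b_0$ in the second, which cuts the bookkeeping substantially. The key structural feature I would exploit is that the $k$ edges leaving any vertex go to a \emph{consecutive block} of $k$ indices on the other side; this ``interval'' structure is what makes it easy to route $k$ paths whose internal vertices stay in disjoint index ranges. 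Concretely, for $a_0$ and $a_m$ I would use the $k$ neighbours $b_0, \dots, b_{k-1}$ of $a_0$ and the $k$ neighbours of $a_m$ as the first and last interior vertices of the $k$ paths, and route each path around the cycle so that no two paths reuse a $b$-vertex or an intermediate $a$-vertex.

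The main obstacle I expect is precisely the explicit routing: ensuring the $k$ paths are genuinely internally disjoint when the two endpoints are close together on the cycle (so their neighbour-intervals overlap) or when $s$ is close to $k$ (so the graph is nearly complete bipartite and the index arithmetic wraps around tightly). The overlapping-interval case is the delicate one, since two short paths might be forced to share a vertex unless they are paired up carefully. Rather than do this routing by hand, I would prefer a cleaner argument: show that deleting any set $U$ of at most $k-1$ vertices leaves $G-U$ connected. For this I would argue that each surviving $a$-vertex still has a neighbour in $B \setminus U$ (since it had $k$ neighbours and $|U| \le k-1$), and similarly for $b$-vertices, and then use the consecutive-interval structure to show the surviving vertices all lie in one component — for instance by checking that one can always ``walk'' from any surviving vertex to a fixed surviving vertex, stepping through consecutive indices, never getting blocked because a run of $k$ consecutive deletions on one side would already require $k > |U|$ vertices.

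An alternative, and probably the slickest finish, is to avoid the circulant construction entirely and instead build $H_{s,s,k}$ inductively or as an edge-disjoint union of $k$ perfect matchings arranged to form a Hamiltonian structure, then invoke a known sufficient condition (e.g. a $k$-regular bipartite graph whose union of any few matchings is connected). However, since the statement only asserts \emph{existence}, I would commit to the circulant graph $G$ above, present the no-small-cut argument as the proof of $k$-connectivity, and flag the overlapping-interval deletion analysis as the one place requiring genuine care. If that case analysis becomes unwieldy for small $s$, I would handle $s = k$ separately, where $G$ is simply the complete bipartite graph $K_{k,k}$, whose $k$-regularity and $k$-connectivity are both standard.
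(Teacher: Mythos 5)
Your construction is exactly the one the paper records: the paper does not prove this lemma itself (it cites Plummer--Saito and merely describes the circulant bipartite graph), so the connectivity verification is the entire content of a self-contained proof, and that is where your proposal has a genuine gap. The claim that a walk toward a fixed surviving vertex ``never gets blocked because a run of $k$ consecutive deletions on one side would already require $k>|U|$ vertices'' is false as stated. Take $U=\{b_{i+1},\dots,b_{i+k-1}\}$, a run of only $k-1$ deletions on the $B$ side. Then the surviving vertex $a_i$ has $b_i$ as its \emph{unique} surviving neighbour, so every forward step from $a_i$ is blocked; connectivity survives only because this configuration exhausts the whole deletion budget, forcing a detour the long way around the cycle to exist. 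More generally, $k-1$ deletions spread over distinct indices and mixed between the two sides can stall a naive index-increasing walk. So ``no vertex is isolated'' plus ``no run of $k$ consecutive deletions'' does not yield connectivity; the argument needs a global fallback, which is precisely what your sketch omits.

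A correct version of your deletion argument runs as follows. Call an index $i$ \emph{clean} if both $a_i\notin U$ and $b_i\notin U$; since $|U|\le k-1<s$, a clean index exists, and between two cyclically consecutive clean indices $i,i'$ every intermediate index contains at least one deleted vertex, so $i'-i\le k$. If $i'-i\le k-1$, then $a_ib_{i'}\in E(G)$ links the two clean pairs directly. If $i'-i=k$, then all $k-1$ deletions lie at the $k-1$ intermediate indices of that single gap, so the path $a_{i'}b_{i'+1}a_{i'+1}b_{i'+2}\cdots a_{i-1}b_ia_i$ running the other way around the cycle avoids $U$ entirely and links them. Hence all clean-index vertices lie in one component of $G-U$. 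Finally, any surviving vertex at a non-clean index, say $a_j$ (so $b_j\in U$), has neighbours $b_j,\dots,b_{j+k-1}$; among these $k$ consecutive indices at least one, say $l$, is clean, and $a_jb_l$ attaches $a_j$ to the clean component (symmetrically for a surviving $b_j$, whose neighbours $a_{j-k+1},\dots,a_j$ cover $k$ consecutive indices). This closes the gap while keeping your overall plan, and your observation that $s=k$ degenerates to $K_{k,k}$, intact.
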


The construction of the bipartite graph in Lemma \ref{PSlem}, given in \cite{PS05}, is as follows. Let $A=\{a_0,\dots,a_{s-1}\}$ and $B=\{b_0,\dots,b_{s-1}\}$ be the partition classes. For every $0\le i\le s-1$, add the edges $a_ib_i,a_ib_{i+1},\dots,a_ib_{i+k-1}$, where the indices are taken modulo $s$.

Hence, we may obtain such a bipartite graph $H_{s,t,k}$ with partition classes $X$ and $Y$ as follows. Let $Y'\subset Y$ be such that $|Y'|=s$. By Lemma \ref{PSlem}, there exists a $k$-regular, $k$-connected bipartite graph $H'$ with classes $X$ and $Y'$. We obtain $H_{s,t,k}$ by taking $H'$, and then for each $v\in Y\setminus Y'$, connect $v$ to $k$ vertices of $X$ (two different vertices of $Y\setminus Y'$ may be connected to two different $k$-subsets of $X$). Then $H_{s,t,k}$ is $k$-connected, and $e(H_{s,t,k})=kt$.

Thus, we propose the version of Conjecture \ref{mckconj1} for bipartite graphs, as follows.

\begin{conj}\label{mckbipconj1}
Let $t\ge s\ge k\ge 2$. Let $G$ be a $k$-connected bipartite graph whose partition classes have $s$ and $t$ vertices, and let $H\subset G$ be a minimum spanning $k$-connected subgraph. If $e(H)=kt$, then
\[
mc_k(G)=e(G)-kt+h_k(G).
\]
\end{conj}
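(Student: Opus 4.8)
The lower bound is immediate and requires no new work: since $e(H)=kt$, Observation~\ref{LBobs} gives $mc_k(G)\ge e(G)-e(H)+h_k(G)=e(G)-kt+h_k(G)$. Hence the entire content of the conjecture is the matching upper bound $mc_k(G)\le e(G)-kt+h_k(G)$, and I would first reduce it (as in Observation~\ref{UBobs}) to colourings in which every colour induces a single non-trivial component. The plan is then to prove a \emph{bipartite sharpening} of Theorem~\ref{gengraphsthm2}: the general bound there uses the denominator $n-2$ and is too weak for bipartite graphs (for $s<t$ it already gives the wrong constant), precisely because it symmetrises over both partition classes. The key new idea is to break this symmetry and count edges only through the \emph{larger} class $Y$.

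Concretely, for $v\in Y$ all neighbours lie in $X$, so among the $n-1$ vertices $u\neq v$ there are $\deg_G(v)$ neighbours (each with $f(u,v)\le \deg_G(v)-1$) and $n-1-\deg_G(v)$ non-neighbours (each with $f(u,v)\le\deg_G(v)$); summing gives $\sum_{u\neq v}f(u,v)\le (n-2)\deg_G(v)$, exactly as in Lemma~\ref{gengraphslem}(a). Summing this over $v\in Y$ only, and using $e(G)=\sum_{v\in Y}\deg_G(v)$ (every edge has one end in $Y$), yields the base case $e(G)\ge\lceil (W_{XY}+2W_Y)/(n-2)\rceil$, where $W_{XY}=\sum_{u\in X,\,v\in Y}f(u,v)$ and $W_Y=\sum_{\{u,v\}\subseteq Y}f(u,v)$. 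I would then run the same induction on the number of colours $r$ as in Theorem~\ref{gengraphsthm2}, deleting one colour class $G_i$, applying the inductive hypothesis to the bipartite graph $H_i=G-E(G_i)$, and combining via $e(G)=e(G_i)+e(H_i)$, to obtain $e(G)\ge\lceil (W_{XY}+2W_Y)/(n-2)\rceil+r-1$. Substituting the monochromatic $k$-connected values $f(u,v)\ge k$ for the $\binom{t}{2}$ pairs inside $Y$ and $f(u,v)\ge k-1$ (resp.\ $k$) for cross pairs that are edges (resp.\ non-edges) gives $W_{XY}+2W_Y\ge kt(s+t-1)-e(G)$, which (after accounting for the ceilings and $r\le n$) rearranges to $e(G)\ge kt+r-1$, i.e.\ $mc_k(G)\le e(G)-kt+1$.

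It remains to reconcile this with the $h_k(G)$ term: the upper bound above matches the lower bound exactly provided $h_k(G)=1$, that is, provided every minimum spanning $k$-connected bipartite subgraph $H$ (with $e(H)=kt$) has $mc_k(H)=1$. I would prove this as the bipartite counterpart of Theorem~\ref{gengraphsthm1}(b), by showing that any monochromatic $k$-connected colouring of such an extremal $H$ is forced to be monochromatic; for the Plummer--Saito-type extremal graphs of Lemma~\ref{PSlem} this should follow from the same counting, since there the base-case inequality is tight and its equality analysis pins down the colouring. I expect the main obstacle to lie exactly in this equality analysis: transporting Lemma~\ref{gengraphslem}(b) and Claims~\ref{claim:t}--\ref{clm:Delta} to the bipartite, sum-over-$Y$ setting, where the degree caps $\deg_G(v)\le s$ for $v\in Y$ and $\deg_G(v)\le t$ for $v\in X$ replace the regularity dichotomy, and one must rule out the tight configurations that would permit a second colour. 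This is also where I expect the restriction to small $k$ (as in Theorem~\ref{smallkthm}) to be needed: for large $k$ I cannot exclude that some extremal $H$ admits $h_k(H)>1$, in which case the clean inequality $e(G)\ge kt+r-1$ fails and the genuine content of the conjecture resides in the $h_k(G)$ term, which appears to require new ideas.
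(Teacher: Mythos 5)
First, a framing point: the statement you are proving is Conjecture \ref{mckbipconj1}, which is \emph{open} in the paper. The paper only establishes the cases $k\in\{2,3\}$ (and $s=t$ with $k\in\{2,3,4,5\}$) as Theorem \ref{smallkbipthm}, and it does so by a method quite different from yours: induction on $s+t$, analysing the colour multisets $C(v)$ at vertices of the larger class $Y$ (observations (O1)--(O3) there), obtaining $r=b(Y)\le e(G)-kt+k-1$ by adding vertices of $Y$ one at a time, and then a separate refinement to remove the excess $k-2$ when $k=3$. So a complete blind proof would be a substantial advance, and your plan must be scrutinised accordingly.

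The central step of your plan fails quantitatively, independently of the parts you defer. Grant your bipartite counting bound in full strength, i.e.\ $e(G)\ge\lceil (W_{XY}+2W_Y)/(n-2)\rceil+r-1$ with $n=s+t$ (this already requires transporting the entire equality analysis behind Theorem \ref{gengraphsthm2} --- Lemma \ref{gengraphslem}(b) and Claims \ref{claim:t}--\ref{clm:Delta} --- since the naive colour-deletion induction only yields $+\,r-2$; you acknowledge this but partly misattribute it to the $h_k$ issue). Substituting $W_{XY}+2W_Y\ge kt(n-1)-e(G)$ then gives
\[
e(G)-r+1\;\ge\;\bigg\lceil kt-\frac{e(G)-kt}{n-2}\bigg\rceil\;=\;kt-\bigg\lfloor\frac{e(G)-kt}{n-2}\bigg\rfloor,
\]
that is, $r\le e(G)-kt+1+\big\lfloor\frac{e(G)-kt}{n-2}\big\rfloor$. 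This equals the conjectured bound only when $e(G)<kt+n-2$, i.e.\ for nearly extremal $G$. For dense $G$ --- e.g.\ $G=K_{t,t}$, the case of Conjecture \ref{Kstconj}(b) --- the slack is of order $t/2$ already for $k=2$, so your bound does not even recover the cases proved in Theorem \ref{smallkbipthm}. The phrase ``after accounting for the ceilings and $r\le n$'' is where this is hidden: $r\le n$ is unjustified and in fact false for the graphs at issue, since $r$ is the number of colours and the quantity being bounded satisfies $mc_k(G)\ge e(G)-kt+1$, which is quadratic in $n$ for dense $G$; without it, integrality only yields $r-\lfloor(r-1)/(n-1)\rfloor\le e(G)-kt+1$, the same weak bound in disguise. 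This is precisely the phenomenon already visible in the non-bipartite setting of the paper: the counting bound of Theorem \ref{gengraphsthm1}(a) is tight only at the extremal edge count, which is why the exact results (Theorem \ref{smallkthm} and Theorem \ref{smallkbipthm}) required an entirely different structural argument rather than a sharpening of Theorem \ref{gengraphsthm2}. Your reduction therefore rests on an inequality that cannot carry the conjecture; the genuinely hard regimes --- dense $G$, and the value of $h_k(G)$ for $k\ge 4$ --- remain untouched.
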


We shall prove the case of Conjecture \ref{mckbipconj1} when $k\in\{2,3\}$. The following partial result can be considered as an analogous result of Theorem \ref{smallkthm} for bipartite graphs.

\begin{thm}\label{smallkbipthm}
Let $t\ge s\ge k$, where $k\in\{2,3\}$. Let $G$ be a $k$-connected bipartite graph whose partition classes have $s$ and $t$ vertices, and let $H\subset G$ be a minimum spanning $k$-connected subgraph. If $e(H)=kt$, then
\[
mc_k(G)=e(G)-kt+1.
\]
Moreover, if $s=t$, then the result holds for $k\in\{2,3,4,5\}$.
\end{thm}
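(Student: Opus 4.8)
The lower bound $mc_k(G)\ge e(G)-kt+1$ is immediate from Observation~\ref{LBobs}, using $e(H)=kt$ and $h_k(G)\ge 1$, so the whole task is the matching upper bound $mc_k(G)\le e(G)-kt+1$. I would first clear away two ranges by reduction. When $s=t$, every subgraph of $G$ is bipartite and $e(H)=kt=\lceil\frac{kn}{2}\rceil$, so $H$ is a minimum spanning $k$-connected subgraph of $G$ with $\lceil\frac{kn}{2}\rceil$ edges; Theorem~\ref{smallkthm} then applies verbatim for all $k\in\{2,3,4,5\}$, which is exactly the ``moreover'' clause. When $k=2$ I would instead use $mc_2(G)\le emc_2(G)$ together with Theorem~\ref{LLthm}(a): a minimum spanning $2$-edge-connected subgraph $H'$ of $G$ is bipartite with $\delta(H')\ge 2$, so $e(H')\ge 2t$, while the $2$-connected subgraph $H$ gives $e(H')\le e(H)=2t$; hence $emc_2(G)=e(G)-2t+1$. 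This leaves the genuinely new case $k=3$ with $s<t$.

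For this case I would prove a bipartite sharpening of Theorem~\ref{gengraphsthm2} in which all the weight is carried by the \emph{larger} class $Y$ (with $|Y|=t$). Writing $w_Y(f)=\sum_{\{u,v\}\subseteq Y}f(u,v)$, the target inequality, for any edge-colouring of $G$ with exactly $r$ colours and any $f$ counting disjoint monochromatic super-paths, is
\[
e(G)\ge\bigg\lceil\frac{2\,w_Y(f)}{t-1}\bigg\rceil+r-1.
\]
The base case $r=1$ is a one-line degree count on $Y$: for $u,v\in Y$ the two vertices are non-adjacent and every super-path connecting them leaves $v$ along a distinct edge into $X$, so $f(u,v)\le\min(\deg_G(u),\deg_G(v))\le\deg_G(v)$; summing over $u\in Y\setminus\{v\}$ and then over $v\in Y$ gives $2\,w_Y(f)\le(t-1)\sum_{v\in Y}\deg_G(v)=(t-1)e(G)$, since $\sum_{v\in Y}\deg_G(v)=e(G)$ in a bipartite graph. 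A short computation shows this is tight exactly on the graphs $H_{s,t,k}$, where $w_Y(f)=k{t\choose 2}$ and the bound reads $e(G)\ge kt$. Alongside the inequality I would record the structure forced by equality, in the spirit of Lemma~\ref{gengraphslem}(b): each colour class is (essentially) regular on the $Y$-side.

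The induction on $r$ then follows the template of the proof of Theorem~\ref{gengraphsthm2}. Fixing a colour $i$, split $G=G_i\cup H_i$ with $H_i=G-E(G_i)$, apply the base case to $G_i$ and the induction hypothesis to $H_i$ (which uses $r-1$ colours), and use $f\le f_i+g_i$ so that $w_Y(f)\le w_Y(f_i)+w_Y(g_i)$. This yields $e(G)\ge\lceil 2w_Y(f)/(t-1)\rceil+r-2$, and the entire difficulty lies in upgrading the $r-2$ to $r-1$, that is, in ruling out simultaneous equality across all colour classes. I would do this exactly as in Claims~\ref{claim:t}--\ref{clm:Delta}: introduce the auxiliary graph $F$, now on the vertex set $Y$, recording the $Y$-pairs where $f$ falls short of $\sum_i m_{G_i}$; bound $e(F)$ from above by the total $Y$-defect and from below by a degree count that exploits the forced $Y$-regularity of the colour classes; and derive a contradiction. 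Granting the displayed inequality, the theorem is immediate: in a monochromatic $k$-connected colouring every pair in $Y$ is non-adjacent, so its $k$ disjoint monochromatic paths are automatically super-paths, giving $f(u,v)\ge k$ and $w_Y(f)\ge k{t\choose 2}$; hence $\lceil 2w_Y(f)/(t-1)\rceil\ge kt$ and $r\le e(G)-kt+1$.

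The main obstacle is precisely this equality analysis. The naive split loses a full unit, and the passage from $r-2$ to $r-1$ rests on showing that colour classes which are simultaneously extremal for the $Y$-weighted bound cannot be assembled into a monochromatic $k$-connected colouring. For small $k$ the $Y$-degrees forced in each colour class are so restricted that the double count on $F$ should close; as $k$ grows I expect the admissible near-regular configurations to proliferate so that the counting no longer yields a contradiction, and this is exactly where I anticipate the hypothesis $k\in\{2,3\}$ (with $s=t$ handled separately through Theorem~\ref{smallkthm} for $k\le 5$) to become essential. I therefore expect the detailed estimates in the equality case, rather than any single conceptual step, to be the crux of the proof.
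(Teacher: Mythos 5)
Your preliminary reductions are all fine: the lower bound via Observation~\ref{LBobs}, the case $s=t$ via Theorem~\ref{smallkthm} (exactly the paper's treatment of the ``moreover'' clause), and the case $k=2$ via $mc_2(G)\le emc_2(G)$ and Theorem~\ref{LLthm}(a) is a correct alternative route (the paper instead gets $k=2$ out of the same induction that handles $k=3$). But the heart of your proposal --- the case $k=3$, $s<t$ --- rests on a lemma that is false. Take $G=K_{s,t}$ with classes $X=\{x_1,\dots,x_s\}$ and $Y$, $s\ge 2$, and colour all edges at $x_i$ with colour $i$, so each colour class is a star $K_{1,t}$ and $r=s$. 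For any $u,v\in Y$ the paths $ux_iv$, $1\le i\le s$, are internally disjoint monochromatic super-paths, so $f(u,v)=s$ is admissible, giving $w_Y(f)=s\binom{t}{2}$ and $\lceil 2w_Y(f)/(t-1)\rceil=st=e(G)$. Your displayed inequality would then read $st\ge st+s-1$, i.e.\ $s\le 1$; it already fails at $r=2$. (Your base case $r=1$ is correct, but your claim that it is tight ``exactly on $H_{s,t,k}$'' is also wrong --- it is tight on $K_{s,t}$, on stars, and much else.) Consequently no equality analysis can upgrade $r-2$ to $r-1$: the extremal configurations you hope to rule out, with every colour class simultaneously tight and the $Y$-weights adding exactly, genuinely exist. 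The root cause is that $w_Y$ discards every pair meeting $X$, and it is precisely the pairs inside $X$ (which have no monochromatic connection in the star colouring) that prevent such colourings from being monochromatic $k$-connected. Note also that your lemma, if true, would contain no $k$ and would prove Conjecture~\ref{mckbipconj1} with $h_k(G)=1$ for every $k\ge 2$, which should itself have been a warning sign.

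The paper's proof is structured quite differently and does retain the $X$-side information. It inducts on $s+t$, not on the number of colours. For $v\in Y$ let $C(v)$ be the multiset of colours at $v$ and $b(Y')$ the number of colours meeting $Y'\subset Y$. Monochromatic $k$-connectivity forces $|C(u)\cap C(v)|\ge k$ for all $u,v\in Y$; and if some $v\in Y$ had $b(v)=\deg_G(v)$ (all colours at $v$ distinct), then deleting $v$ preserves monochromatic $k$-connectivity of $G[X,Y\setminus\{v\}]$, so the induction hypothesis applies and gives the bound. Hence $b(v)\le\deg_G(v)-1$ for every $v\in Y$, and adding the vertices of $Y$ one at a time yields $r=b(Y)\le e(G)-kt+k-1$, which finishes $k=2$. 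For $k=3$, equality forces every $C(y)$ to have exactly one repeated colour, then a counting argument forces the repeated colour to be the same colour $1$ for all $y\in Y$, and finally a parity argument (super-paths between the two sides of a bipartite graph have odd length $\ge 3$, hence pass through an internal $Y$-vertex, hence must have colour $1$) produces three edges of colour $1$ at a single vertex of $Y$, a contradiction. If you want to salvage your framework, the essential repair is to reinstate, in whatever weight or counting function you induct on, the connectivity demands of pairs meeting $X$; as it stands, the proposed key inequality is simply not true.
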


Our proofs of Theorems \ref{smallkthm} and \ref{smallkbipthm} will consider multisets of colours at the vertices of an edge-coloured graph $G$. Let $G$ be given an edge-colouring $\phi:E(G)\to[r]$. For $v\in V(G)$, let $C(v)$ denote the multiset of the colours of the edges at $v$. For simplicity, we ignore commas and brackets for a multiset, and we use a superscript to denote the multiplicity of a colour, where a superscript of zero means that the colour is not present. For example, $C(v)=1^22^43^04=1^22^44$ means that the number of edges in colours $1,2,3,4$ at $v$ are $2,4,0$ and $1$. If we list some elements of a multiset, followed by $\ast$, we mean that the remaining elements of the multiset occur with multiplicity 1, and are different from those listed. For example, $C(v)=1^22^43^04\ast$ (resp.~$C(v)=1^22^44\ast$) means that there are a further $\deg_G(v)-7$ colours at $v$ which are different from colours $1,2,3,4$ (resp.~colours $1,2,4$). Containment and intersection relations between multisets are naturally defined. For $u,v\in V(G)$, we write $C(u)\subset C(v)$ if, for every colour in $C(u)$, its multiplicity in $C(u)$ is at most its multiplicity in $C(v)$. Also, $C(u)\cap C(v)$ is the multiset which contains all colours common to $C(u)$ and $C(v)$, where the multiplicity of a colour in $C(u)\cap C(v)$ is the minimum of its multiplicities in $C(u)$ and $C(v)$.

\begin{proof}[Proof of Theorem \ref{smallkthm}]
Since $e(H)=\lceil\frac{kn}{2}\rceil$, Observation \ref{LBobs} implies the lower bound $mc_k(G)\ge e(G)- \lceil  \frac{kn}{2}\rceil + 1$. We prove the upper bound $mc_k(G)\le e(G)- \lceil  \frac{kn}{2}\rceil + 1$, and we remark that the condition $e(H)=\lceil\frac{kn}{2}\rceil$ will not be required. Let $G$ be given a monochromatic $k$-connected colouring, with exactly $r$ colours. By Observation \ref{UBobs}, we may assume that every colour $i\in[r]$ induces exactly one non-trivial component. We prove that $r\le e(G)- \left\lceil  \frac{kn}{2}\right\rceil + 1$. Let $F$ be the spanning subgraph of $G$ obtained by deleting all edges in monochromatic components on two vertices, and note that these deleted edges have distinct colours. Suppose that $F$ uses exactly $r'\le r$ colours, say $1,\dots,r'$. For a set of colours $A\subset [r']$, let $\bar{A}=[r']\setminus A$. Let $G_A$ be the spanning subgraph of $F$ such that $E(G_A)$ is the set of all edges with a colour in $A$. For simplicity, we write $G_i$ for $G_{\{i\}}$. Then we can easily observe the following.
\begin{itemize}
\item[(O1)] Each monochromatic component of $F$ has at least two edges.
\item[(O2)] Any two vertices of $F$ are connected by $k-1$ disjoint monochromatic super-paths. Thus, $F$ is $(k-1)$-connected.
\item[(O3)] $\delta(F)\ge k$.
\item[(O4)] Let $vw\in E(F)$ be such that the colour of $vw$ appears exactly once at the vertex $v$. Then $\deg_{G_A}(w)\ge k+1$, where $A$ is the set of colours appearing at $v$.
\end{itemize}
Indeed, (O1) is from the definition of $F$, and (O2) is from the fact that the colouring of $G$ is monochromatic $k$-connected. To see (O3), let $x\in V(F)$ be any vertex. By (O2), there exists $y\in V(F)$ with $xy\in E(F)$, and $k-1$ disjoint super-paths from $x$ to $y$. Thus, $\deg_F(x)\ge k$. To see (O4), note that by (O1), there exists a vertex $x\in V(F)\setminus\{v,w\}$ such that $vw$ and $wx$ have the same colour, say $i\in A$. By (O2), $v$ and $w$ are connected by $k-1$ disjoint monochromatic super-paths, using colours from $A\setminus\{i\}$. Thus $\deg_{G_A}(w)\ge k+1$.

In the rest of the proof, we will show that
\begin{equation}\label{smallkeq1}
2e(F) \ge {kn}+2(r'-1),
\end{equation}
since \eqref{smallkeq1} implies $e(G) =  e(F)+(r-r')\ge \frac{kn}{2}+(r'-1)+(r-r')$, so that $r\le e(G)-\lceil \frac{kn}{2}\rceil +1$. Note that (\ref{smallkeq1}) holds for $r'=1$, since (O3) gives $2e(F)\ge kn$.  From now on, we let $r'\ge 2$.
\\[1ex]
\emph{Case 1.} Every vertex of $F$ is incident to every colour in $[r']$.\\[1ex]
\indent By the choice of the colouring and the case assumption, we have $e(G_i)\ge n-1$ for all $i\in [r']$.
%each colour induces a spanning connected subgraph of $F$, and so has at least $n-1$ edges.
Thus $2e(F)\ge 2r'(n-1)$. For $r'\ge \lceil\frac{k}{2}\rceil+1$,  (\ref{smallkeq1}) holds, since
\begin{align*}
2r'(n-1)- \left( kn+ 2(r'-1)\right) &= (2n-4)r'-kn+2 \ge (2n-4)\Big(\frac{k}{2}+1\Big)-kn+2 \\
&=2n-2k-2 \ge 0.
\end{align*}

Next, we consider the case when $k$ is odd and $r'=\frac{k+1}{2}$. If $\delta(F)\ge k+1$, then $2e(F)\ge (k+1)n\ge kn+k+1>kn+2(r'-1)$. Otherwise, there exists $u\in V(F)$ with $\deg_F(u)=k$. For $i\in [r']$, we estimate $e(G_i)$ as follows. If $\deg_{G_i}(u)\ge 2$, let $N_{G_i}(u)=\{v,v_1,\dots,v_t\}$, where $t=\deg_{G_i}(u)-1\ge 1$. By (O2), there exist $k-1$ disjoint monochromatic super-paths connecting $u$ and $v$, where exactly $t$ of these super-paths have colour $i$, and they use $v_1,\dots, v_t$. Deleting $uv_1,\dots,uv_t$ gives a tree $T'$ using colour $i$, and $uv\in E(T')$. We may then extend $T'$ to a spanning tree $T$ of $G_i$, so that $T'\subset T$. Note that $uv_1,\dots,uv_t\not\in E(T)\setminus E(T')$, since adding any of $uv_1,\dots,uv_t$ to $T'$ creates a cycle. Thus, $e(G_i)\ge e(T)+t=n-2+\deg_{G_i}(u)$. The inequality $e(G_i)\ge n-2+\deg_{G_i}(u)$ also holds if $\deg_{G_i}(u)=1$. Now,
\begin{align*}
2e(F) &=2\sum_{i=1}^{r'} e(G_i)\ge 2r'(n-2)+2\sum_{i=1}^{r'} \deg_{G_i}(u)=(k+1)(n-2)+2k\\
&=kn+n-2\ge kn+k-1=kn+2(r'-1),
\end{align*}
so again (\ref{smallkeq1}) holds.

The remaining cases are $(k,r')=(4,2),(5,2)$, so we are now required to prove that $2e(F)\ge kn+2$. Suppose to the contrary that $2e(F)\le kn+1$. From (O3), we have $\delta(F)\ge k$, and thus $2e(F)\ge kn$. This means that either $F$ is $k$-regular, or $k=5$ and $F$ has degree sequence $5,\dots,5,6$. Let $S$ be the set of vertices of $F$ with degree $k$, and in the latter case, let $w$ be the vertex with degree $6$. For $v\in V(F)$, let $C(v)$ denote the multiset of colours incident to $v$ in $F$. We have some further observations.

\begin{itemize}
\item[(O5)] $C(u)=C(v)$ for any two vertices $u,v\in S$.
\item[(O6)] $C(u)\subset C(w)$ for every $u\in S$.
\item[(O7)] No colour induces a Hamilton cycle in $F$.
\end{itemize}

Indeed, (O5) clearly holds by (O2) if $uv\in E(F)$. If $uv\not\in E(F)$, then by (O2), there are $k-1\ge 3$ disjoint super-paths connecting $u$ and $v$, so that all vertices of one super-path are in $S$. This implies that (O5) holds for $u$ and $v$. For (O6), if $v\in S$ and $v\in N_F(w)$, then for all $u\in S$, we have $C(u)=C(v)\subset C(w)$ by (O5) and (O2). For (O7), suppose that some colour induces a Hamilton cycle $C$ in $F$. Choose a vertex $u\in S$, and $v\in N_C(u)$. By (O2), there are $k-1\ge 3$ disjoint monochromatic super-paths connecting $u$ and $v$, one of which must be $C-uv$ which contains every vertex of $F$. This is a contradiction.

Now, suppose that $F$ is $k$-regular. If there exists $v\in V(F)$ incident with exactly one edge of colour $1$, then (O4) implies that there exists another vertex $x\in V(F)\setminus\{v\}$ with $\deg_F(x)\ge k+1$, a contradiction. Thus for all $u\in V(F)$, by (O5), we may assume that $C(u)=1^22^2$ for $k=4$, and $C(u)=1^22^3$ for $k=5$. In either case, colour $1$ induces a Hamilton cycle in $G$, which contradicts (O7).

Finally, suppose that $k=5$ and $F$ has degree sequence $5,\dots,5,6$. If there exists a vertex of $S=V(F)\setminus\{w\}$ incident with exactly one edge of colour $1$, then by (O5), the same holds for every other vertex of $S$. By (O4), it follows that every edge incident to $w$ has colour $1$, a contradiction to the case assumption. Thus by (O5), we may assume that $C(u)=1^22^3$ for all $u\in S$. Since $\sum_{x\in V(F)}\deg_{G_1}(x)$ is even, this means that the total number of $1$s over all $C(x)$, for $x\in V(F)$, is even. Thus by (O6), we have $C(w)=1^22^4$. But then, colour $1$ induces a Hamilton cycle, which again contradicts (O7).\\[1ex]
\noindent\emph{Case 2.} There is a vertex $v\in V(F)$ such that $\deg_F^c(v)=\delta^c(F)<r'$.\\[1ex]
\indent Let $\deg_F^c(v)=\delta^c(F)=p<r'$, and the set of colours of the edges incident to $v$ be exactly $A=[p]$.
Note that (O3) implies $\deg_{G_A}(v)\ge k$. By (O2), for every $x\in V(F)\setminus\{v\}$, there exist $k-1$ disjoint monochromatic super-paths from $v$ to $x$, using colours from $A$. Thus $\deg_{G_A}(x)\ge k-1$, and $\delta(G_A)\ge k-1$. Let $U=\{x\in V(F):\deg_{G_A}(x)=k-1\}$. For any $w\in V(F)$, let $Q_w=\{y\in N_F(w):$ The colour of $wy$ occurs uniquely at $w\}$, $q_w=|Q_w|$, and $R_w=N_F(w)\setminus Q_w$. Note that $0\le q_w\le \deg_F(w)$. Clearly, we have $\deg_F(v)\ge q_v+2(p-q_v)$.\\[1ex]
\noindent\emph{Subcase 2.1.} $U=\emptyset$.\\[1ex]
\indent We have $\delta(G_A)\ge k$. By (O1), we have $e(G_{\bar{A}})\ge 2(r'-p)$. Using (O4),
\begin{align}
2e(F) &= 2e(G_A)+2e(G_{\bar{A}})\ge \deg_F(v)+q_v(k+1)+(n-1-q_v)k+4(r'-p)\nonumber\\
&\ge q_v+2(p-q_v)+q_v(k+1)+(n-1-q_v)k+4(r'-p)\nonumber\\
&= kn+2(r'-1)+2(r'-p)+2-k.\label{sc2.1eq1}
\end{align}
If $p\le r'-2$, or $p=r'-1$ and $k\in\{2,3,4\}$, then (\ref{smallkeq1}) holds. Now suppose that $p=r'-1$ and $k=5$, so that $A=[r'-1]$ and $\bar{A}=\{r'\}$. Then (\ref{sc2.1eq1}) becomes
\begin{equation}
2e(F)\ge 5n+2(r'-1)-1. \label{sc2.1eq2}
\end{equation}
Suppose that equality holds in (\ref{sc2.1eq2}). Then $e(G_{\bar{A}})=2$, so the only non-trivial component of $G_{\bar{A}}$ is a path of length $2$ in colour $r'$, say $x_1x_2x_3$.
\begin{clm}\label{sc2.1clm}
For $w\in V(F)$, we have the following.
\begin{enumerate}
\item[(a)] If $w\not\in\{x_1,x_2,x_3\}$, then $\deg_{G_A}(y)=6$ for all $y\in Q_w$, and $\deg_{G_A}(z)=5$ for all $z\in V(F)\setminus(\{w\}\cup Q_w)$.
\item[(b)] $\deg_{G_A}(w)\in\{5,6\}$.
\item[(c)] If $w\not\in\{x_1,x_2,x_3\}$, then the colours appearing at $w$ are exactly $A=[r'-1]$. If $w\in\{x_1,x_2,x_3\}$, then at least $r'-2$ colours of $A$ appear at $w$. In either case, for any colour of $A$ that appears at $w$, the colour appears on either one or two edges incident to $w$.
\end{enumerate}
\end{clm}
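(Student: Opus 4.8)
The plan is to prove (a) together with the ``each colour of $A$ occurs once or twice'' half of (c) by \emph{re-running} the Subcase 2.1 counting with $w$ playing the role of $v$, and then to deduce (b) and the colour-count half of (c) from this. The key observation is that any $w\notin\{x_1,x_2,x_3\}$ is incident to no edge of colour $r'$, since the only non-trivial component of $G_{\bar A}$ is the path $x_1x_2x_3$. Hence all colours at $w$ lie in $A$, so $\deg_F^c(w)\le|A|=p$; as $p=\delta^c(F)\le\deg_F^c(w)$, we get $\deg_F^c(w)=p$, and the colours at $w$ are \emph{exactly} $A$. Thus $w$ meets precisely the hypotheses used for $v$: $\deg_{G_A}(w)=\deg_F(w)$, (O4) applies to each of its uniquely coloured edges, and $\delta(G_A)\ge k$ holds by the Subcase 2.1 assumption. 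Re-running the chain leading to \eqref{sc2.1eq2} with $w$ gives the same bound $2e(F)\ge 5n+2(r'-1)-1$, because its right-hand side depends only on $k$, $p$ and $r'$. Since we have assumed equality in \eqref{sc2.1eq2}, every inequality in the $w$-chain is tight: tightness of the (O4) estimates yields (a) (that $\deg_{G_A}(y)=6$ for $y\in Q_w$ and $\deg_{G_A}(z)=5$ otherwise), and tightness of $\deg_F(w)\ge q_w+2(p-q_w)$, combined with ``colours at $w$ are exactly $A$'', yields that each colour of $A$ appears once or twice at $w$.

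For (b), first note that $v\notin\{x_1,x_2,x_3\}$ (its colours are exactly $A$, so it meets no colour-$r'$ edge), and that $|V(F)\setminus\{x_1,x_2,x_3\}|\ge n-3\ge 3$ since $n\ge 6$. Given any $w$, I would choose $w'\notin\{x_1,x_2,x_3\}$ with $w'\ne w$ and apply (a) to $w'$: then either $w\in Q_{w'}$, forcing $\deg_{G_A}(w)=6$, or $w\notin\{w'\}\cup Q_{w'}$, forcing $\deg_{G_A}(w)=5$; in all cases $\deg_{G_A}(w)\in\{5,6\}$. For the colour-count half of (c) when $w\in\{x_1,x_2,x_3\}$, the vertex $w$ meets exactly one colour of $\bar A=\{r'\}$, while $\deg_F^c(w)\ge\delta^c(F)=p=r'-1$; hence at least $r'-2$ colours of $A$ appear at $w$.

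The crux is the multiplicity bound at $x_1,x_2,x_3$, namely that each colour of $A$ occurs at most twice at these three vertices too. Here I would first record the rigidity obtained so far: for every $c\in A$, each vertex outside $\{x_1,x_2,x_3\}$ has $\deg_{G_c}\in\{1,2\}$, each $G_c$ is connected, and $\deg_{G_A}(x_i)\in\{5,6\}$ by (a). Writing $\deg_{G_A}(v)=2p-q_v\in\{5,6\}$ with $q_v\ge 0$ pins $p$ to $\{3,4,5,6\}$, and $\deg_{G_A}(w')\ge p$ forces $p\le 6$. The case $p=6$ is immediate, since then at least $5$ colours of $A$ appear at $x_i$ with total multiplicity $\deg_{G_A}(x_i)\le 6$, so no colour can occur three times.

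For $p\in\{3,4,5\}$ a genuine structural argument is required, and this is where I expect the main difficulty. My intended route is to exploit the adjacency rigidity that (a) forces: applied to any $w'\notin\{x_1,x_2,x_3\}$ it gives $Q_{w'}=\{z:\deg_{G_A}(z)=6\}\setminus\{w'\}$, so every $G_A$-degree-$6$ vertex is joined, via edges coloured uniquely at the other end, to almost all vertices. A non-special such vertex $y$ then has $\deg_{G_A}(y)\ge n-4$, which together with $\deg_{G_A}(y)=6$ caps $n$ at a small value; since $G_c$ has all non-special degrees in $\{1,2\}$, each $G_c$ is a near-path/near-cycle with branchings only possibly at $x_1,x_2,x_3$. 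I would then combine this with the $k-1=4$ disjoint monochromatic super-paths guaranteed by (O2) to contradict $\deg_{G_c}(x_i)\ge 3$. I expect this last step to reduce to a careful, but finite, case analysis over the few remaining values of $p$ and of $n$.
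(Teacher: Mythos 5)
Your handling of (a), (b), the colour-count assertions of (c), and the ``once or twice'' bound at vertices outside $\{x_1,x_2,x_3\}$ is correct and coincides with the paper's own proof: since such a $w$ misses colour $r'$, its colour set is exactly $A$, so the Case 2 chain can be re-run with $w$ in place of $v$, and equality in \eqref{sc2.1eq2} forces tightness of the (O4) estimates and of $\deg_F(w)\ge q_w+2(p-q_w)$; part (b) then follows by applying (a) at a third vertex, using $n\ge 6$.

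The problem is precisely at what you call the crux: the multiplicity bound at $x_1,x_2,x_3$ is never actually established. What you give is a programme -- rigidity of $Q_{w'}$, a cap on $n$, then ``a careful, but finite, case analysis'' -- and the programme is both unperformed and holed. Your cap $n\le 10$ requires a vertex of $G_A$-degree $6$ \emph{outside} $\{x_1,x_2,x_3\}$; nothing you wrote excludes the branch in which every degree-$6$ vertex of $G_A$ lies in $\{x_1,x_2,x_3\}$ (note $Q_{w'}$ may well equal a subset of $\{x_1,x_2,x_3\}$), and in that branch your argument bounds nothing, so ``the few remaining values of $n$'' is unjustified; one can in fact still bound $n$ there (a special degree-$6$ vertex lies in $Q_{w'}$ for every non-special $w'$, hence is adjacent to all $n-3$ of them, while its $F$-degree is at most $6+2$), but you did not do this, and even granting the bound, a case analysis over all colourings of graphs on up to ten-odd vertices is substantial work that is absent. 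The paper closes this step directly, for all $n$ and with no enumeration, and you should adopt its argument: since $\deg_{G_A}(x_1)\ge 5$ by (b), pick $y\in N_{G_A}(x_1)\setminus\{x_2,x_3\}$; by (O2) there are $k-1=4$ disjoint monochromatic super-paths joining $x_1$ and $y$, and these pair up, colour for colour, four edges at $x_1$ with four edges at $y$ distinct from $x_1y$. If $x_1\in Q_y$, then $\deg_{G_A}(x_1)=6$, the colour of $x_1y$ occurs on a second edge $x_1z$ (by (O1)) but on no super-path (that colour occurs at $y$ only on $x_1y$), so the four paired edges at $x_1$ are exactly those other than $x_1y,x_1z$ and avoid that colour; if $x_1\in R_y$, then $\deg_{G_A}(x_1)=5$ and the paired edges are exactly those other than $x_1y$, with the colour of $x_1y$ occurring at most once among the paired edges at $y$. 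In either case the bound ``at most twice'', already proved at $y$, transfers to every colour of $A$ at $x_1$, and likewise at $x_2,x_3$.
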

\begin{proof}
(a) The assertion holds for $v$, since we have $2e(G_A)=\deg_F(v)+6q_v+5(n-1-q_v)$ by equality in (\ref{sc2.1eq1}). Now, let $w\in V(F)\setminus\{x_1,x_2,x_3\}$. Since $\delta^c(F)=p=r'-1$, and $w$ is not incident with colour $r'$, the set of colours at $w$ is exactly $A=[r'-1]$. Thus we may apply the same argument from the beginning of Case 2 with $w$ in place of $v$, so the assertion holds for $w$. \\[1ex]
\indent(b) Since $n\ge 6$ by (O3), this is immediate by using (a) on some vertex $u\in V(F)\setminus\{x_1,x_2,x_3,w\}$.\\[1ex]
\indent(c) The first assertion is mentioned in the proof of (a), and since $\delta^c(F)=p=r'-1$, the second assertion is also clear. We prove the final assertion. Applying the argument from the beginning of Case 2 with any $w\in V(F)\setminus\{x_1,x_2,x_3\}$ in place of $v$, we have $\deg_F(w)=q_w+2(p-q_w)=q_w+2(r'-1-q_w)$ by equality in (\ref{sc2.1eq1}), and the final assertion holds for $w$. Now consider $x_1$. Since $\deg_{G_A}(x_1)\ge 5$ by (b), let $y\in N_{G_A}(x_1)\setminus\{x_2,x_3\}$. If $x_1\in Q_y$, then by (a), we have $\deg_{G_A}(x_1)=6$, with an edge $x_1z$ having the same colour as $x_1y$ for some $z\in V(F)\setminus\{x_1,y\}$. If $x_1\in R_y$, then by (a), we have $\deg_{G_A}(x_1)=5$. In either case, there are four remaining edges at $x_1$ in $G_A$, excluding $x_1y,x_1z$ if the former, and excluding $x_1y$ if the latter. By (O2), these four edges must use the same colours as some four edges at $y$, excluding $x_1y$. It is then easy to see that the assertion for $y$ implies that the assertion holds for $x_1$. Similar arguments hold for $x_2,x_3$, and we are done.
\end{proof}

($\ast$) Now, we have $\deg_F(v)\in\{5,6\}$ by Claim \ref{sc2.1clm}(b). Suppose first that $q_v\ge 3$ (so that $r'\ge 4$). Let $v_1,v_2,v_3\in Q_v$, and $u\in N_F(v)\setminus\{v_1,v_2,v_3\}$. By (O2), there are four disjoint monochromatic super-paths connecting $v$ and $u$. We may assume that two such paths are $P,P'$, with colours $1$ and $2$, and using $v_1,v_2$ respectively. Using Claim \ref{sc2.1clm}(a) with $v$, and (O2) with $v,u$, it is easy to check that, whether $u\in Q_v$ or $u\in R_v$, there must exist exactly one edge in each of colours $1$ and $2$ at $u$. By Claim \ref{sc2.1clm}(c), and the fact that colours $1$ and $2$ each induce exactly one non-trivial component in $F$, it follows that $P$ and $P'$ are precisely the non-trivial components in colours $1$ and $2$. Moreover, we have $V(P), V(P')\supset V(F)\setminus\{x_1,x_2,x_3\}$, since every vertex of $V(F)\setminus\{x_1,x_2,x_3\}$ is incident to colours $1$ and $2$. Also, if $x_i\not\in V(P)$ for some $i$, then $x_i\in V(P')$, since $x_i$ must be incident to at least one of colours $1$ and $2$. Thus $V(P\cup P')=V(F)$, which is a contradiction.

Finally, let $q_v\le 2$. The final part of Claim \ref{sc2.1clm}(c) implies $|R_v|\ge 4$. Let $v_1,v_2,v_3,v_4\in R_v$, and $u\in N_F(v)\setminus\{v_1,v_2,v_3,v_4\}$. Assume that $vv_1,vv_2$ have colour $1$, and $vv_3,vv_4$ have colour $2$. Suppose that $\deg_F(v)=5$. Then by (O2), there are four disjoint monochromatic super-paths connecting $v$ and $u$, say $P,P',N,N'$, using $v_1,v_2,v_3,v_4$ respectively, so that $P,P'$ have colour $1$, and $N, N'$ have colour $2$. We have two cycles, $C=P\cup P'$ using colour $1$, and $D=N\cup N'$ using colour $2$. Using Claim \ref{sc2.1clm}(c) as before, we have $V(C),V(D)\supset V(F)\setminus\{x_1,x_2,x_3\}$. We may assume that $v_1\not\in\{x_1,x_2,x_3\}$. But then $v_1$ would be an internal vertex of $N$ or $N'$, a contradiction. At this point, we have always obtained a contradiction for the case $\deg_F(v)=5$. Now suppose that $\deg_F(v)=6$. We may assume that $v_1\not\in\{x_1,x_2,x_3\}$. By Claim \ref{sc2.1clm}(a) and (c), we have $\deg_F(v_1)=\deg_{G_A}(v_1)=5$. Thus we may apply the argument from the point ($\ast$) with $v_1$ in place of $v$ to obtain similar contradictions.

We conclude that equality cannot hold in (\ref{sc2.1eq2}). Therefore, (\ref{smallkeq1}) holds.\\[1ex]
\noindent\emph{Subcase 2.2.} $U\neq\emptyset$.\\[1ex]
\indent Recall that the vertex $v$ is incident to the set of colours $A=[p]$.
\begin{clm}\label{sc2.2clm}
$e(G_{\bar{A}})\ge |U|+k-2$.
\end{clm}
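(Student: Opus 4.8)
The plan is to locate $U$ inside $G_{\bar A}$ and then harvest edges from the component structure of $G_{\bar A}$. First I would record the elementary degree facts. Since $p<r'$, the set $\bar A$ is non-empty, and by (O1) each colour of $\bar A$ induces one non-trivial component of $F$ with at least two edges. For every $x\in U$ we have $\deg_{G_{\bar A}}(x)=\deg_F(x)-\deg_{G_A}(x)\ge k-(k-1)=1$ by (O3), so each vertex of $U$ is incident to at least one edge of $G_{\bar A}$. By contrast $v\notin U$, since (O3) together with the fact that $v$ meets only colours of $A$ gives $\deg_{G_A}(v)=\deg_F(v)\ge k$, and for the same reason $\deg_{G_{\bar A}}(v)=0$, so $v$ is isolated in $G_{\bar A}$.

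Next I would extract the bulk term $|U|$. Consider the non-trivial components $D_1,\dots,D_\ell$ of $G_{\bar A}$; there are at most $|\bar A|=r'-p$ of them, and each is connected, so $e(D_i)\ge |V(D_i)|-1$. The goal here is to show that every $D_i$ contains a vertex of $V(F)\setminus U$, whence $|V(D_i)|\ge |U\cap V(D_i)|+1$ and $e(D_i)\ge |U\cap V(D_i)|$; summing over $i$ would give $e(G_{\bar A})\ge |U|$, which already settles the claim when $k=2$. To rule out a component $D$ with $V(D)\subseteq U$, I would argue by contradiction: every $x\in V(D)$ then spends all $k-1$ of its colour-$A$ edges on the $k-1$ disjoint monochromatic super-paths joining $x$ to $v$ supplied by (O2) (these are forced to use only colours of $A$, since $v$ meets no colour of $\bar A$), while all of its colour-$\bar A$ edges remain inside $D$. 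Applying (O4) to a colour-$\bar A$ edge of $D$ whose colour is unique at one of its endpoints should then force a vertex of $V(D)$ to have degree exceeding $k-1$ in the colours appearing at that endpoint, contradicting $\deg_{G_A}\equiv k-1$ on $U$. A viable alternative, which I would try if the component argument is awkward, is to show directly that no edge of $G_{\bar A}$ joins two vertices of $U$; then $\sum_{x\in U}\deg_{G_{\bar A}}(x)$ counts distinct edges and already yields $e(G_{\bar A})\ge |U|$.

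The delicate point, and the one I expect to be the main obstacle, is the extra additive $k-2$. Having secured $e(G_{\bar A})\ge |U|$, I would aim to exhibit $k-2$ further edges of $G_{\bar A}$ not charged in the bound above. The natural source is again (O2): fixing a vertex $x_0\in U$ together with one of its colour-$\bar A$ neighbours, one obtains $k-1$ disjoint monochromatic super-paths between a suitably chosen pair, and I would try to show these contribute at least $k-2$ edges lying inside some component of $G_{\bar A}$ beyond those already accounted for; alternatively (O4) can be used to force a vertex of large colour-$\bar A$ degree. Because the constant is exactly $k-2$ and $k\le 5$, I anticipate that closing this gap cleanly will require a short case analysis over $k\in\{2,3,4,5\}$, carefully tracking how the colour-$A$ super-paths to $v$ (which occupy \emph{all} colour-$A$ edges at each vertex of $U$) interact with the colour-$\bar A$ edges at the vertices of $U$. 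This bookkeeping between the rigid $A$-structure and the freer $\bar A$-structure is where I expect the real work to lie.
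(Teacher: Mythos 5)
Your first step---getting the bulk term $e(G_{\bar{A}})\ge |U|$ by showing that every component of $G_{\bar{A}}$ meeting $U$ also contains a vertex outside $U$---is sound in spirit and matches the opening of the paper's proof, but your mechanism for ruling out a component $D$ with $V(D)\subseteq U$ does not work as sketched. Observation (O4), applied at a vertex $x\in V(D)$, concludes that some neighbour $y'$ satisfies $\deg_{G_{A_x}}(y')\ge k+1$ where $A_x$ is the set of colours appearing at $x$; since $A_x$ contains colours of $\bar{A}$, this in no way contradicts $\deg_{G_A}(y')=k-1$. Your fallback claim, that no edge of $G_{\bar{A}}$ joins two vertices of $U$, is also not available: nothing prevents two $U$-vertices from being adjacent in $G_{\bar{A}}$ (the paper's proof never excludes this, and does not need to). The correct argument is a crossing-edge argument: since $v$ is isolated in $G_{\bar{A}}$, we have $V(F)\setminus V(D)\neq\emptyset$, so $F$ (connected, indeed $(k-1)$-connected by (O2)) has an edge $xy$ with $x\in V(D)$, $y\notin V(D)$, necessarily of a colour in $A$; the $k-1$ disjoint monochromatic super-paths joining $x$ and $y$ avoid the edge $xy$ itself, and $x$ has only $k-2$ colour-$A$ edges besides $xy$, so one super-path is $\bar{A}$-coloured, forcing $y\in V(D)$, a contradiction.

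The genuine gap is the additive term $k-2$, which you explicitly leave open, and the source you propose cannot supply it: the $k-1$ super-paths joining $x_0\in U$ to a $\bar{A}$-neighbour $y$ avoid the edge $x_0y$ but may perfectly well all be $A$-coloured (using the $k-1$ colour-$A$ edges at $x_0$), contributing no edges of $G_{\bar{A}}$ whatsoever. The missing idea is a cut-set argument, applied per component: for each component $L_j$ of $G_{\bar{A}}$ containing a vertex of $U$, let $C$ be the set of vertices of $L_j$ incident to an edge of $E_F(V(L_j),V(F)\setminus V(L_j))$. The crossing-edge argument above shows $C\cap U=\emptyset$, and $|C|\ge k-1$, since otherwise $C$ would be a cut-set of at most $k-2$ vertices separating a $U$-vertex of $L_j$ from $V(F)\setminus V(L_j)$ (which contains $v$), contradicting the $(k-1)$-connectivity of $F$ from (O2). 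Hence $|V(L_j)\setminus U|\ge k-1$, so
\[
e(L_j)\ge |V(L_j)|-1\ge |V(L_j)\cap U|+k-2,
\]
and summing over the components $L_1,\dots,L_s$ gives $e(G_{\bar{A}})\ge |U|+s(k-2)\ge |U|+k-2$. Note also that no case analysis over $k\in\{2,3,4,5\}$ is needed; this argument is uniform in $k$.
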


\begin{proof}
Consider the components of $G_{\bar{A}}$. Since $\delta(F)\ge k$ by (O3), we see that every vertex $x\in U$ must be incident to an edge with a colour in $\bar{A}$, so that $x$ belongs to exactly one non-trivial component of $G_{\bar{A}}$. Let $L_1,\dots,L_s$ be all the components of $G_{\bar{A}}$ that contain a vertex of $U$, for some $s\ge 1$. Note that we have the partition $U=(V(L_1)\cap U)\dcup\cdots\dcup(V(L_s)\cap U)$, and every $L_j$ has at least three vertices. Also, $v$ is an isolated vertex of $G_{\bar{A}}$, so $v$ does not belong to any of $L_1,\dots,L_s$, and $V(F)\setminus V(L_j)\neq\emptyset$ for $1\le j\le s$.

For $1\le j\le s$, we claim that if $x\in V(L_j)\cap U$, then $x$ is not incident to an edge of $E_F(V(L_j),V(F)\setminus V(L_j))$. Otherwise, suppose that $xy\in E_F(V(L_j),V(F)\setminus V(L_j))$ for some $y\in V(F)\setminus V(L_j)$. Since $L_j$ is a component of $G_{\bar{A}}$, the colour of $xy$ is in $A$. By (O2), there exist $k-1$ monochromatic super-paths connecting $x$ and $y$ in $F$. Since $\deg_{G_A}(x)=k-1$, one of the super-paths uses a colour in $\bar{A}$. This contradicts that $L_j$ is a component of $G_{\bar{A}}$, and the claim holds. Now, let $C\subset V(L_j)$ be the set of vertices that are incident to some edge of $E_F(V(L_j),V(F)\setminus V(L_j))$. By the definition of $L_j$, there exists $u\in V(L_j)\cap U$. By the claim, we see that $u\not\in C$. Also, note that $|C|\ge k-1$. Otherwise $C$ would be a cut-set of $F$ with at most $k-2$ vertices which separates $u$ from $V(F)\setminus V(L_j)$, and this contradicts that $F$ is $(k-1)$-connected by (O2). Finally, no vertex of $C$ can be in $U$. We conclude that $|V(L_j)\setminus U|\ge k-1$ for $1\le j\le s$.

Now, we have
\begin{align*}
e(G_{\bar{A}}) &\ge \sum_{j=1}^s e(L_j)\ge \sum_{j=1}^s (|V(L_j)|-1)=\sum_{j=1}^s (|V(L_j)\cap U|+|V(L_j)\setminus U|-1)\\
&\ge \sum_{j=1}^s (|V(L_j)\cap U|+k-2)= |U|+s(k-2)\ge |U|+k-2.
\end{align*}
This completes the proof of Claim \ref{sc2.2clm}.
\end{proof}

Now we complete the proof of Subcase 2.2. Note that
\begin{align*}
2e(G_A) &\ge \deg_F(v)+ q_v(k+1)+ (n-1-q_v-|U|)k + |U|(k-1)\\
&=\deg_F(v)+q_v+kn-k-|U|,
\end{align*}
and $e(G_{\bar{A}})\ge 2(r'-p)$ by (O1). Using Claim \ref{sc2.2clm}, a similar calculation as (\ref{sc2.1eq1}) gives
\begin{align*}
2e(F) &= 2e(G_A)+2e(G_{\bar{A}})\\
&\ge (\deg_F(v)+q_v+kn-k-|U|) +(|U|+k-2)+2(r'-p)\\
& = kn+2(r'-1)+(\deg_F(v)+q_v-2p)\\
& \ge kn+2(r'-1),
\end{align*}
so (\ref{smallkeq1}) holds.\\[1ex]
\indent The proof of Theorem \ref{smallkthm} is complete.
\end{proof}

\begin{proof}[Proof of Theorem \ref{smallkbipthm}]
We note that the ``moreover'' part follows immediately from Theorem \ref{smallkthm}. From now on, let $t\ge s\ge k$, where $k\in\{2,3\}$. Let $G$ be a bipartite graph as in the statement of the theorem. The lower bound $mc_k(G)\ge e(G)-kt+1$ follows from Observation \ref{LBobs}. We prove the upper bound $mc_k(G)\le e(G)-kt+1$.

We proceed by induction on $s+t$. The base case is $s=t=k$, and thus $G=K_{k,k}$. We may obtain $mc_k(K_{k,k})=1$ from Theorem \ref{gengraphsthm1}(b) or Theorem \ref{smallkthm}. Alternatively, suppose that $K_{k,k}$, with classes $A$ and $B$, is given a colouring, using at least two colours. We may assume that there exist $x\in A$ and $u,v\in B$ such that $ux$ and $vx$ have different colours. Then, to have $k$ disjoint paths connecting $u$ and $v$, one path must be $uxv$, which is not monochromatic. Thus, the colouring is not a monochromatic $k$-connected colouring of $K_{k,k}$, and $mc_k(K_{k,k})=1$.

Now, let $t\ge s\ge k$ and $s+t>2k$, so that $t>k$. Suppose that the upper bound holds for all $t'\ge s'\ge k$ with $s'+t'<s+t$. Let $G$ be a $k$-connected bipartite graph with classes $X$ and $Y$, where $|X|=s$ and $|Y|=t$. Suppose that $G$ is given a monochromatic $k$-connected colouring, using exactly $r$ colours. We prove that $r\le e(G)-kt+1$. Let the colours be $1,\dots, r$. For a non-empty subset $Y'\subset Y$, let $G[X,Y']$ denote the bipartite subgraph of $G$ induced by $X\cup Y'$. Let $b(Y')$ be the number of colours that are incident to at least one vertex of $Y'$. That is, we have $b(Y')=\big|\bigcup_{y\in Y'} N^c_G(y)\big|$. For $v\in Y$, we write $b(v)$ for $b(\{v\})$ for simplicity. Let $C(v)$ denote the multiset of colours incident to $v\in Y$. We have the following observations.
\begin{itemize}
\item[(O1)] For all $u,v\in Y$, there are $k$ edges incident to $u$ which have the same multiset of colours as $k$ edges incident to $v$. That is, $|C(u)\cap C(v)|\ge k$.
\item[(O2)] For all non-empty $Y'\varsubsetneq Y$ and $v\in Y\setminus Y'$, there are at most $\deg_G(v)-k$ colours incident to $v$ which are not used in $G[X,Y']$.
\item[(O3)] For all $v\in Y$, we have $b(v)\le \deg_G(v)-1$.
\end{itemize}

Indeed, (O1) is clear since there are $k$ disjoint monochromatic super-paths connecting $u$ and $v$. (O2) is also clear by taking $u\in Y'$ and using (O1). To see (O3), suppose that $b(v)=\deg_G(v)$ for some $v\in Y$. Then note that the colouring on $G'=G[X,Y\setminus\{v\}]$ is monochromatic $k$-connected, since any monochromatic super-path connecting two vertices of $G'$ cannot use $v$. By (O1) and induction, we have
\[
r\le
\left\{
\begin{array}{l@{\quad\:}l}
e(G')-k(t-1)+1+(\deg_G(v)-k)=e(G)-kt+1, & \textup{if $s<t$,}\\[0.5ex]
e(G')-ks+1+(\deg_G(v)-k)< e(G)-kt+1, & \textup{if $s=t$,}
\end{array}
\right.
\]
and we are done.

Now, let $u\in Y$. By (O3), we have $b(u)\le \deg_G(u)-1$. Taking $v\in Y\setminus\{u\}$ and applying (O2) gives $b(\{u,v\})\le \deg_G(u)-1+\deg_G(v)-k$. Repeating successively with the vertices of $Y\setminus\{u,v\}$ gives
\begin{equation}\label{smallkbipeq1}
r=b(Y)\le \sum_{y\in Y}\deg_G(y)-1-k(t-1)=e(G)-kt+k-1.
\end{equation}
We are done by induction for the case $k=2$. For the rest of the proof, let $k=3$. Assume on the contrary that equality in (\ref{smallkbipeq1}) holds. That is,  $r=e(G)-3t+2$. Since we may obtain (\ref{smallkbipeq1}) by starting with any vertex of $Y$ instead of $u$, it follows that $b(y)=\deg_G(y)-1$ for all $y\in Y$. That is, $C(y)$ has exactly one colour occurring twice, and $\deg_G(y)-2$ other colours each occurring once, for all $y\in Y$. Suppose that there exist $w,z\in Y$ such that $C(w)=a^2\ast$ and $C(z)=b^2\ast$, where $a\neq b$. Since $|C(w)\cap C(z)|\ge k$ by (O1), we have $C(w)=a^2b^pc_1\cdots c_{3-p-q}\ast$ and $C(z)=b^2a^qc_1\cdots c_{3-p-q}\ast$, for some $p,q\in\{0,1\}$ and $c_1,\dots,c_{3-p-q}\in[r]\setminus\{a,b\}$. Thus,
\begin{align*}
b(\{w,z\}) &\le (5-p-q)+\deg_G(w)-(5-q)+\deg_G(z)-(5-p)\\
&=\deg_G(w)+\deg_G(z)-5.
\end{align*}
Again, we may successively add vertices of $Y\setminus\{w,z\}$ to obtain $r=b(Y)\le \sum_{y\in Y}\deg_G(y)-5-3(t-2)=e(G)-3t+1$, a contradiction. Hence, we may assume that $C(y)=1^2\ast$ for all $y\in Y$. Now, choose $x\in X$ and $y\in Y$ such that $xy$ has colour $1$. There exist two disjoint monochromatic super-paths with odd length connecting $x$ and $y$, both of which must have colour $1$. But then, there are three edges at $y$ with colour $1$, a contradiction. The case $k=3$ follows by induction.
\end{proof}

%\noindent{\bf Remark.} Our proof methods of Theorems \ref{smallkthm} and \ref{smallkbipthm} would become more complicated for $k\ge 6$ and $k\ge 4$, respectively. Roughly speaking, suppose that we wish to consider the case $k=6$ in Theorem \ref{smallkthm}. We would then encounter a challenging situation in Case 1 when the graph $F$ is $6$-regular, and at every vertex of $F$, the multiset of colours is always equal to $1^32^3$. Similarly, suppose that we wish to consider the case $k=4$ in Theorem \ref{smallkbipthm}. We have a challenging situation when $G$ has a colouring such that, at every vertex of $Y$, there are exactly two edges of colour $1$ and two edges of colour $2$, with all remaining edges given further, distinct colours.

\section{Complete graphs and complete bipartite graphs}\label{compsect}

In this final section, we shall use the results of Sections \ref{gensect} and \ref{smallksect} to deduce some results about the monochromatic $k$-connection number of complete graphs and complete bipartite graphs. We also propose some further conjectures.

For the case of the complete graph $K_n$, Conjecture \ref{mckconj1} becomes the following.

\begin{conj}\label{Knconj}
Let $n>k\ge 2$. We have
\[
mc_k(K_n)={n\choose 2}-\bigg\lceil\frac{kn}{2}\bigg\rceil+1.
\]
\end{conj}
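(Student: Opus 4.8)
The plan is to prove Conjecture \ref{Knconj} by verifying the two hypotheses of Conjecture \ref{mckconj1} for $G=K_n$, rather than attacking the monochromatic connectivity directly. Recall that Conjecture \ref{mckconj1} (which is a theorem of this paper for $k\in\{2,3,4,5\}$ via Theorem \ref{smallkthm}, though conjectural in general) asserts that if a $k$-connected graph $G$ on $n$ vertices has a minimum spanning $k$-connected subgraph $H$ with $e(H)=\lceil\frac{kn}{2}\rceil$, then $mc_k(G)=e(G)-\lceil\frac{kn}{2}\rceil+1$. So the first step is to observe that $K_n$ is certainly $k$-connected whenever $n>k$, and that $e(K_n)=\binom{n}{2}$. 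Substituting these into the formula immediately yields the desired expression $\binom{n}{2}-\lceil\frac{kn}{2}\rceil+1$. Thus Conjecture \ref{Knconj} is nothing more than the specialisation of Conjecture \ref{mckconj1} to the complete graph, and the only thing requiring justification is that $K_n$ satisfies the hypothesis $e(H)=\lceil\frac{kn}{2}\rceil$.

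The second step, then, is to confirm that $K_n$ admits a minimum spanning $k$-connected subgraph with exactly $\lceil\frac{kn}{2}\rceil$ edges. This is exactly the content of Harary's construction in Theorem \ref{Harthm}: for every $n>k\ge 2$ there exists a $k$-connected graph $H_{n,k}$ on $n$ vertices with $e(H_{n,k})=\lceil\frac{kn}{2}\rceil$. Since $H_{n,k}$ is a subgraph of $K_n$ (every graph on $n$ vertices is), and since Theorem \ref{Harthm} also gives the lower bound $e(H)\ge\lceil\frac{kn}{2}\rceil$ for any $k$-connected graph on $n$ vertices, it follows that a minimum spanning $k$-connected subgraph $H\subset K_n$ has precisely $e(H)=\lceil\frac{kn}{2}\rceil$. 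Hence the hypothesis of Conjecture \ref{mckconj1} is met, and Conjecture \ref{Knconj} is exactly Conjecture \ref{mckconj1} applied to $K_n$.

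In other words, Conjecture \ref{Knconj} is not an independent statement but a reformulation: it records what Conjecture \ref{mckconj1} predicts in the complete-graph case. The main obstacle, therefore, is not in the derivation above, which is purely formal, but in the fact that Conjecture \ref{mckconj1} itself is only a theorem for $k\in\{2,3,4,5\}$. Consequently, the honest status of Conjecture \ref{Knconj} is that it is proved for $k\in\{2,3,4,5\}$ (by combining Theorem \ref{smallkthm} with Harary's construction) and remains open for larger $k$. The genuinely hard part — establishing the upper bound $mc_k(K_n)\le\binom{n}{2}-\lceil\frac{kn}{2}\rceil+1$ for all $k$ — is precisely the obstruction that keeps Conjecture \ref{mckconj1} open, and it would require an extension of the delicate degree-sequence and recolouring analysis of Theorem \ref{gengraphsthm2} and Theorem \ref{smallkthm} beyond the small values of $k$ handled here. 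I would expect the bottleneck to be controlling, for large $k$, the interaction between many monochromatic components each of which is forced to be nearly regular by Lemma \ref{gengraphslem}(b), a combinatorial balancing problem that grows substantially more intricate as $k$ increases.
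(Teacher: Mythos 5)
Your proposal is correct and matches the paper's own treatment: the paper introduces Conjecture \ref{Knconj} precisely as the specialisation of Conjecture \ref{mckconj1} to $K_n$ (with Theorem \ref{Harthm} supplying the hypothesis $e(H)=\lceil\frac{kn}{2}\rceil$), and, just as you say, it is established only for $k\in\{2,3,4,5\}$ via Theorem \ref{smallkthm} (recorded as Theorem \ref{Knthm}(b)), with bounds for general $k$ coming from Observation \ref{LBobs} and Theorem \ref{gengraphsthm1}(a). Your assessment of what remains open, and why, is accurate.
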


From our previous results, we may deduce the following partial result of Conjecture \ref{Knconj}.

\begin{thm}\label{Knthm}
\indent
\begin{enumerate}
\item[(a)] Let $n>k\ge 2$. We have
\[
{n\choose 2}-\bigg\lceil\frac{kn}{2}\bigg\rceil+1\le mc_k(K_n) \le {n\choose 2}- \bigg\lceil  \frac{(k-1)n(n-1)}{2(n-2)}\bigg\rceil + 1.
\]
\item[(b)] Let $n > k$, where $k \in\{2, 3, 4, 5\}$. We have
\[
mc_k(K_n)={n\choose 2}-\bigg\lceil\frac{kn}{2}\bigg\rceil+1.
\]
\end{enumerate}
\end{thm}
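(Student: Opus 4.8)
The plan is to derive both parts directly from the general results already established, since $K_n$ is the extremal case where $e(G)=\binom{n}{2}$ is as large as possible. The one structural fact I would invoke at the outset is that $K_n$ contains a minimum spanning $k$-connected subgraph with exactly $\lceil\frac{kn}{2}\rceil$ edges: by Theorem \ref{Harthm}, the Harary graph $H_{n,k}$ is such a graph on $n$ vertices, and it is a subgraph of $K_n$. Hence, writing $H\subset K_n$ for a minimum spanning $k$-connected subgraph, we have $e(H)=\lceil\frac{kn}{2}\rceil$ throughout.

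For the lower bound in part (a), I would simply apply Observation \ref{LBobs} to $G=K_n$. Since $e(K_n)=\binom{n}{2}$, $e(H)=\lceil\frac{kn}{2}\rceil$, and $h_k(K_n)\ge 1$ trivially, the second inequality of (\ref{mckLBeq}) yields
\[
mc_k(K_n)\ge\binom{n}{2}-\bigg\lceil\frac{kn}{2}\bigg\rceil+1,
\]
as required. For the upper bound, I would substitute $e(G)=\binom{n}{2}$ into (\ref{gengraphscoreq}) of Theorem \ref{gengraphsthm1}(a). This gives
\[
mc_k(K_n)\le\binom{n}{2}-\bigg\lceil\frac{k\binom{n}{2}-\binom{n}{2}}{n-2}\bigg\rceil+1=\binom{n}{2}-\bigg\lceil\frac{(k-1)\binom{n}{2}}{n-2}\bigg\rceil+1,
\]
and the only remaining task is the routine simplification $(k-1)\binom{n}{2}=\frac{(k-1)n(n-1)}{2}$, so that the argument of the ceiling becomes $\frac{(k-1)n(n-1)}{2(n-2)}$, matching the claimed bound exactly.

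For part (b), I would apply Theorem \ref{smallkthm} to $G=K_n$. Its hypothesis is precisely that a minimum spanning $k$-connected subgraph $H$ satisfies $e(H)=\lceil\frac{kn}{2}\rceil$, which we have already verified via the Harary graph, and $K_n$ is $k$-connected for all $n>k$. Therefore, for $k\in\{2,3,4,5\}$, Theorem \ref{smallkthm} gives $mc_k(K_n)=e(K_n)-\lceil\frac{kn}{2}\rceil+1=\binom{n}{2}-\lceil\frac{kn}{2}\rceil+1$ directly.

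In short, this theorem carries no genuine obstacle beyond careful bookkeeping: every ingredient has been prepared earlier, and the work reduces to identifying $e(K_n)=\binom{n}{2}$ and confirming that the Harary construction supplies the required minimum spanning $k$-connected subgraph. The only point that merits a line of care is the arithmetic in part (a), where one must rewrite the ceiling expression coming from Theorem \ref{gengraphsthm1}(a) into the stated form; I would present that single simplification explicitly and leave the rest as immediate citations of Observation \ref{LBobs} and Theorems \ref{gengraphsthm1} and \ref{smallkthm}.
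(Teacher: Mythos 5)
Your proposal is correct and follows exactly the paper's own proof: the lower bound comes from Observation \ref{LBobs} together with Theorem \ref{Harthm} (the Harary graph $H_{n,k}$ serving as the minimum spanning $k$-connected subgraph with $\lceil\frac{kn}{2}\rceil$ edges), the upper bound from substituting $e(K_n)=\binom{n}{2}$ into Theorem \ref{gengraphsthm1}(a), and part (b) is an immediate application of Theorem \ref{smallkthm}. The only difference is that you spell out the ceiling simplification and the verification of the hypothesis $e(H)=\lceil\frac{kn}{2}\rceil$, which the paper leaves implicit.
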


\begin{proof}
(a) The lower bound follows from Observation \ref{LBobs} and Theorem \ref{Harthm}, and the upper bound follows from Theorem \ref{gengraphsthm1}(a).\\[1ex]
\indent (b) Immediate by Theorem \ref{smallkthm}.
\end{proof}

Now, we consider the complete bipartite graph $K_{s,t}$, where $t\ge s\ge k\ge 2$. Conjecture \ref{mckbipconj1} applies for $K_{s,t}$. For the case of $K_{t,t}$, if $H$ is a minimum spanning $k$-connected subgraph of $K_{t,t}$, then $e(H)=kt=\big\lceil\frac{k(t+t)}{2}\big\rceil$, and Conjecture \ref{mckconj1} applies. Thus, we have the following conjecture for complete bipartite graphs.

\begin{conj}\label{Kstconj}
\indent
\begin{enumerate}
\item[(a)] Let $t\ge s\ge k\ge 2$. We have
\[
mc_k(K_{s,t})=st-kt+h_k(K_{s,t}).
\]
\item[(b)] Let $t\ge k\ge 2$. We have
\[
mc_k(K_{t,t})=t^2-kt+1.
\]
\end{enumerate}
\end{conj}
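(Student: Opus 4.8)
The plan is to prove both identities by sandwiching $mc_k$ between matching lower and upper bounds, and to observe that the cases within reach follow directly from the bipartite machinery of Section \ref{smallksect}; the genuinely open part is the upper bound outside the small-$k$ range.

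First I would dispatch the lower bounds via Observation \ref{LBobs}. As noted before Lemma \ref{PSlem}, a minimum spanning $k$-connected subgraph $H$ of a $k$-connected bipartite graph with class sizes $s\le t$ has $e(H)=kt$. Colouring such an $H$ with $h_k$ colours and assigning the remaining $st-kt$ edges pairwise distinct colours immediately gives $mc_k(K_{s,t})\ge st-kt+h_k(K_{s,t})$ in (a), and likewise $mc_k(K_{t,t})\ge t^2-kt+h_k(K_{t,t})$ in (b).

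For part (b) the next step is to evaluate $h_k(K_{t,t})$. Since $s=t$ here, we have $e(H)=kt=\lceil k(t+t)/2\rceil$, so $H$ attains the absolute minimum number of edges for a $k$-connected graph on $2t$ vertices; Theorem \ref{gengraphsthm1}(b) then forces $mc_k(H)=1$ for every such $H$, whence $h_k(K_{t,t})=1$ and the lower bound becomes $t^2-kt+1$. For the matching upper bound I would apply the ``moreover'' clause of Theorem \ref{smallkbipthm} to $G=K_{t,t}$, which is $k$-connected with a minimum spanning $k$-connected subgraph of exactly $kt$ edges; this yields $mc_k(K_{t,t})\le t^2-kt+1$ for each $k\in\{2,3,4,5\}$, settling (b) in that range. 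Part (a) is handled identically for $k\in\{2,3\}$: Theorem \ref{smallkbipthm} applies verbatim to $G=K_{s,t}$ and gives $mc_k(K_{s,t})=st-kt+1$, and comparing with the lower bound forces $h_k(K_{s,t})\le 1$, hence $h_k(K_{s,t})=1$, so the stated identity holds with this explicit value.

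The hard part will be everything beyond this range, namely $k\ge 6$ in (b) and $k\ge 4$ in (a), where the upper bound is not supplied by Theorem \ref{smallkbipthm}. Establishing it would mean pushing the induction behind that theorem, in particular the control over the colour multisets $C(y)$ and the interactions recorded in (O1)--(O3), to arbitrary $k$; the required case analysis grows rapidly with $k$, and this is the principal obstruction. A further subtlety specific to (a) with $s<t$ is that $h_k(K_{s,t})$ need not equal $1$: since $kt>\lceil k(s+t)/2\rceil$ in this case, Theorem \ref{gengraphsthm1}(b) no longer forces $mc_k(H)=1$ on a minimum spanning subgraph $H$, so determining $h_k(K_{s,t})$ becomes an independent sub-problem. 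I would therefore expect a full resolution to rest on first proving Conjecture \ref{mckbipconj1} for all $k$.
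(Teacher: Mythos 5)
Your proposal is correct in everything it claims, and for the parts it covers it follows essentially the same route as the paper's own partial result (Theorem \ref{Kstthm}): lower bounds from Observation \ref{LBobs} together with the fact that a minimum spanning $k$-connected subgraph of $K_{s,t}$ has exactly $kt$ edges, the evaluation $h_k(K_{t,t})=1$ via Theorem \ref{gengraphsthm1}(b), and matching upper bounds from Theorem \ref{smallkbipthm} for $k\in\{2,3\}$ in (a) and $k\in\{2,3,4,5\}$ in (b); your inference that the $k\in\{2,3\}$ equality forces $h_k(K_{s,t})=1$ is also sound. However, the paper's partial result is strictly stronger than yours in two respects, and you should incorporate both. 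First, for part (a) the paper also settles the case $s=k$ for \emph{every} $k\ge 2$, showing $mc_k(K_{k,t})=1$ by a short direct argument: if two edges at a vertex $w$ of the small class had different colours, their endpoints $u,v$ in the large class could not be joined by $k$ disjoint monochromatic paths, since one of those $k$ paths must be the length-two path $uwv$; hence every vertex of the small class sees only one colour, and then two colours $1,2$ meeting at a vertex $y$ of the large class, on edges $xy$ and $x'y$, give a contradiction because one of the $k$ disjoint monochromatic paths from $x$ to $y$ must have the form $xy'x'y$ and so carries colour $2$, putting two colours at $x$. So your assertion that everything with $k\ge 4$ in (a) is open is not quite accurate. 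Second, for arbitrary $k$ the paper extracts unconditional upper bounds from Theorem \ref{gengraphsthm1}(a), namely $mc_k(K_{s,t})\le st-\lceil kt/2\rceil+1$ and $mc_k(K_{t,t})\le t^2-\lceil (k-\tfrac{1}{2})t\rceil+1$, each reducing to a routine algebraic inequality (for instance $k(st+s^2+t-s)\ge 2st$, which holds since $t\ge s\ge k\ge 2$); these give nontrivial progress exactly in the range you declare untouched. Neither addition closes the conjecture, which indeed remains open, but both are cheap given the machinery you already invoke.
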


Using our previous results, we have the following partial result of Conjecture \ref{Kstconj}.

\begin{thm}\label{Kstthm}
\indent
\begin{enumerate}
\item[(a)] Let $t\ge s\ge k\ge 2$. We have
\begin{equation}
st-kt+1\le mc_k(K_{s,t}) \le st- \bigg\lceil  \frac{kt}{2}\bigg\rceil + 1.\label{compbipthmeq1}
\end{equation}
Moreover, for $s=k\ge 2$ or $k\in\{2,3\}$, we have
\begin{equation}
mc_k(K_{s,t})=st-kt+1.\label{compbipthmeq2}
\end{equation}
\item[(b)] Let $t\ge k\ge 2$. We have
\begin{equation}
t^2-kt+1\le mc_k(K_{t,t}) \le t^2- \bigg\lceil \bigg(k-\frac{1}{2}\bigg)t\bigg\rceil + 1.\label{compbipthmeq3}
\end{equation}
Moreover, for $k\in\{2,3,4,5\}$, we have
\begin{equation}
mc_k(K_{t,t})=t^2-kt+1.\label{compbipthmeq4}
\end{equation}
\end{enumerate}
\end{thm}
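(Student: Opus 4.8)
The plan is to obtain every assertion of Theorem~\ref{Kstthm} by specialising the general results of Sections~\ref{gensect} and \ref{smallksect}, reserving a genuinely new argument only for the degenerate case $s=k$ of part~(a). First I would dispose of the lower bounds using Observation~\ref{LBobs}. Since $K_{s,t}$ (with $t\ge s\ge k$) is $k$-connected and, as noted just before Lemma~\ref{PSlem}, has a minimum spanning $k$-connected subgraph $H$ with $e(H)=kt$, Observation~\ref{LBobs} gives $mc_k(K_{s,t})\ge e(K_{s,t})-e(H)+1=st-kt+1$. This is the lower bound of \eqref{compbipthmeq1}, and taking $s=t$ yields the lower bound of \eqref{compbipthmeq3}.

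For the upper bounds I would substitute $G=K_{s,t}$ into Theorem~\ref{gengraphsthm1}(a). With $n=s+t$ and $e(G)=st$ this gives $mc_k(K_{s,t})\le st-\lceil\frac{k\binom{n}{2}-st}{n-2}\rceil+1$, so all that remains is the arithmetic estimate $\frac{k\binom{s+t}{2}-st}{s+t-2}\ge\frac{kt}{2}$. Clearing denominators reduces this to $k(s^2+st-s+t)\ge 2st$, which holds for all $k\ge 2$ and $s\ge 1$; hence the ceiling is at least $\lceil\frac{kt}{2}\rceil$, giving the upper bound of \eqref{compbipthmeq1}. The graph $K_{t,t}$ is treated identically: the corresponding estimate $\frac{k\binom{2t}{2}-t^2}{2t-2}\ge(k-\frac12)t$ reduces after clearing denominators to $k\ge 1$, producing the upper bound of \eqref{compbipthmeq3}. (I note in passing that a direct degree count on the larger side, bounding the number of ``small'' one-edge colour classes, in fact gives the slightly stronger $mc_k(K_{s,t})\le st-\lceil\frac{kt}{2}\rceil$; but the route through Theorem~\ref{gengraphsthm1}(a) suffices and is cleaner.)

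The exact evaluations follow from Section~\ref{smallksect}. For $k\in\{2,3\}$ the graph $K_{s,t}$ meets the hypotheses of Theorem~\ref{smallkbipthm} (it is $k$-connected bipartite with $e(H)=kt$), so that theorem gives $mc_k(K_{s,t})=st-kt+1$, namely \eqref{compbipthmeq2}; similarly the ``moreover'' clause of Theorem~\ref{smallkbipthm} (equivalently Theorem~\ref{smallkthm} applied with $n=2t$) yields \eqref{compbipthmeq4} for $k\in\{2,3,4,5\}$. The one case requiring real work is $s=k$, where \eqref{compbipthmeq2} asserts $mc_k(K_{k,t})=1$: the lower bound is immediate, so I must prove the upper bound $mc_k(K_{k,t})\le 1$. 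Write the classes as $X$ (of size $s=k$) and $Y$ (of size $t$). The key structural observation is that for any two vertices $u,v\in Y$, the $k$ internally disjoint monochromatic paths between them must each use exactly one internal vertex, since all internal vertices lie in $X$ and $|X|=k$ forces each path to have length exactly $2$. Monochromaticity then forces $ux_i$ and $x_iv$ to share a colour for every $x_i\in X$; letting $u,v$ vary over $Y$ shows every vertex of $X$ is incident to a single colour, and then joining two vertices of $X$ by even one monochromatic path forces all these colours to coincide, so only one colour is used.

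I expect this last case to be the main obstacle, though it is not deep: the remaining parts are routine specialisations of earlier theorems together with the two elementary inequalities above, while the $s=k$ analysis needs the forced length-$2$ decomposition and the propagation of a single colour across the side $X$. Care should be taken to confirm that the hypothesis $n>k\ge 2$ of Theorem~\ref{gengraphsthm1}(a) is met in both applications (indeed $n=s+t\ge 2k>k$ for $K_{s,t}$ and $n=2t>k$ for $K_{t,t}$), and that the case $s=k$ is consistent with the formula $st-kt+1$, which collapses to $1$ exactly when $s=k$.
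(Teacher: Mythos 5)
Your proposal is correct and follows essentially the same route as the paper: lower bounds from Observation \ref{LBobs} with $e(H)=kt$, upper bounds by substituting $G=K_{s,t}$ (resp.\ $K_{t,t}$) into Theorem \ref{gengraphsthm1}(a) and verifying the identical arithmetic inequalities $k(s^2+st-s+t)\ge 2st$ and $k\ge 1$, the exact values for $k\in\{2,3\}$ and for $K_{t,t}$ with $k\in\{2,3,4,5\}$ from Theorem \ref{smallkbipthm}, and a direct pigeonhole argument for $s=k$ showing each vertex of $X$ sees only one colour. The only (harmless) deviations are that your justification ``all internal vertices lie in $X$'' should be phrased as ``each of the $k$ internally disjoint paths uses at least one internal vertex of $X$, so by $|X|=k$ exactly one, forcing length $2$,'' and that you finish the $s=k$ case by propagating the colour along a monochromatic path between two vertices of $X$, whereas the paper instead examines the $k$ disjoint paths between a vertex $x\in X$ and a vertex $y\in Y$; both closings are valid.
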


\begin{proof}
The results (\ref{compbipthmeq2}) where $k\in\{2,3\}$, and (\ref{compbipthmeq4}), follow immediately from Theorem \ref{smallkbipthm}. We prove (\ref{compbipthmeq2}) when $s=k\ge 2$. That is, $mc_k(K_{k,t})=1$. Let $X$ and $Y$ be the classes of $K_{k,t}$, where $|X|=k$ and $|Y|=t$. Suppose that there exists a monochromatic $k$-connected colouring of $K_{k,t}$, using at least two colours. If there exist $w\in X$ and $u,v\in Y$ such that $wu$ and $wv$ have different colours, then we do not have $k$ disjoint monochromatic paths connecting $u$ and $v$, since one path must be $uwv$. Thus, every vertex of $X$ must be incident with one colour. Now, we may assume that there exist $x,x'\in X$ and $y\in Y$ such that $xy$ and $x'y$ have colours $1$ and $2$. To have $k$ disjoint monochromatic paths connecting $x$ and $y$, one path must be $xy'x'y$ for some $y'\in Y\setminus\{y\}$, which has colour $2$. But then, $x$ is incident with colours $1$ and $2$, a contradiction.

Now, for the inequalities (\ref{compbipthmeq1}) and (\ref{compbipthmeq3}), both lower bounds follow from Observation \ref{LBobs}, and the fact that $e(H)=kt$ if $H$ is a minimum spanning $k$-connected subgraph of $K_{s,t}$ or $K_{t,t}$. We prove the two upper bounds. Applying Theorem \ref{gengraphsthm1}(a) for $G=K_{s,t}$, we have
\[
mc_k(K_{s,t})\le st-\bigg\lceil\frac{k{s+t\choose 2}-st}{s+t-2}\bigg\rceil+1=st-\bigg\lceil\frac{k(s+t)(s+t-1)-2st}{2(s+t-2)}\bigg\rceil+1.
\]
For the upper bound of (\ref{compbipthmeq1}), it suffices to show that
\[
\frac{k(s+t)(s+t-1)-2st}{2(s+t-2)}\ge\frac{kt}{2}.
\]
This inequality is equivalent to $k(st+s^2+t-s)\ge 2st$, which is true since $t\ge s\ge k\ge 2$.

\indent For the upper bound of (\ref{compbipthmeq3}), setting $s=t$, it suffices to show that
\[
\frac{2kt(2t-1)-2t^2}{2(2t-2)}\ge\bigg(k-\frac{1}{2}\bigg)t.
\]
This inequality is equivalent to $k\ge 1$, which is true.
\end{proof}

\section*{Acknowledgements}
Qingqiong Cai is supported by National Key Research and Development Program of China (No.~2022YFA1006400), and Fundamental Research Funds for the Central Universities (050-63231193). Shinya Fujita is supported by JSPS KAKENHI (No.~23K03202). Henry Liu is partially supported by National Natural Science Foundation of China (No.~11931002), and National Key Research and Development Program of China (No.~2020YFA0712500). Boram Park is supported under the framework of an international cooperation program managed by the National Research Foundation of Korea (NRF-2023K2A9A2A06059347).

Shinya Fujita and Boram Park acknowledge the generous hospitality of Sun Yat-sen University, Guangzhou, China. They were able to carry out part of this research with Henry Liu during their visits there.

\end{document}